\newlength\glyphwidth
\newlength\widthofx
\newsavebox\hatglyphCONTENT
\sbox\hatglyphCONTENT{%
%%%% 1ST OPTIONAL ARGUMENT OF \addvbuffer (CROP OFF TOP OF STACKED hat)
%%%% 2ND OPTIONAL ARGUMENT OF \addvbuffer (CROP OFF BOTTOM OF STACKED hat)
    \addvbuffer[-0.05ex -1.3ex]{$\hat{\phantom{.}}$}%
}
\newcommand\hatglyph{\resizebox{0.6\widthofx}{!}{\usebox{\hatglyphCONTENT}}}
\newcommand\shifthat[2]{%
%%%% 1ST ARGUMENT OF \stackengine (GAP BETWEEN GLYPH AND \hatglyph)
    \stackengine{0.2\widthofx}{%
        \SavedStyle#2}{%
        \rule{#1}{0ex}\hatglyph}{O}{c}{F}{T}{S}%
}
\newcommand\relativeGlyphOffset[1]{%
    % The horizontal offset in arbitrary units that scale with math style.
    \str_case:nnF{#1}{%
        {A}{0.18}%
        {B}{0.1}%
        {W}{0.02}%
        {J}{0.18}%
        {\phi}{0.17}%
    }{0.05}% Default
}\ExplSyntaxOff
\NewDocumentCommand{\hatt}{mO{#1}}{%
    \ThisStyle{%
        \setlength\glyphwidth{\widthof{$\SavedStyle{}\longleftarrow$}}%
        \setlength\widthofx{\widthof{$\SavedStyle{}x$}}%
        \shifthat{\relativeGlyphOffset{#2}\glyphwidth}{#1}%
  }%
}
\DeclareMathOperator{\rank}{rk}
\DeclareMathOperator{\valence}{val}
\DeclareMathOperator{\im}{im}
\def\immerses{\looparrowright}
 \def\into{\hookrightarrow}
\def\ncl#1{\mathord{\langle}\mskip -4mu plus 0mu minus 0mu
  \mathord{\langle}#1\mathord{\rangle}\mskip -4mu plus 0mu minus 0mu
  \mathord{\rangle}}
\def\pb{P}
\def\hgraph{{\Gamma}}
\def\adj{W}
\def\und{{\Gamma_W}}
\def\folded{{\Gamma_W^I}}
\def\ambient{{\Omega}}
\def\real#1{\boldsymbol #1}
\def\rk#1{\rank(#1)}
\def\pio#1{\pi_1(#1)}
\def\subscript{x}
\def\GG{G} % this is the one relator group now
\def\HH{H}
\def\KK{K} % this is for w-subgroups
\def\FF{F}
\def\NN{\mathbb{N}}
\def\ZZ{\mathbb{Z}}
\def\define{\raisebox{0.3pt}{\ensuremath{:}}\negthinspace\negthinspace=}
\newtheorem{theorem}{Theorem}[section]
\newtheorem*{thm}{Theorem}
\newtheorem{lemma}[theorem]{Lemma}
\newtheorem{corollary}[theorem]{Corollary}
\newtheorem{proposition}[theorem]{Proposition}
\newtheorem{conjecture}[theorem]{Conjecture}
\newtheorem{question}[theorem]{Question}
\theoremstyle{definition}
\newtheorem{definition}[theorem]{Definition}
\theoremstyle{remark}
\newtheorem{remark}[theorem]{Remark}
\newtheorem{example}[theorem]{Example}
\author{Larsen Louder and Henry Wilton}
\title{Negative immersions for one-relator groups}
\newcommand{\Addresses}{{% additional braces for segregating \footnotesize
  \bigskip
  \footnotesize

  L.~Louder, \textsc{Department of Mathematics, University College London, Gower Street, London WC1E  6BT, UK}\par\nopagebreak
  \textit{E-mail address:} \texttt{l.louder@ucl.ac.uk}

  \medskip

  H. Wilton, \textsc{DPMMS, Centre for Mathematical Sciences, Wilberforce Road, Cambridge CB3 0WB, UK}\par\nopagebreak
  \textit{E-mail address:} \texttt{h.wilton@maths.cam.ac.uk}

}}
\begin{document}
\maketitle

\begin{abstract}
  We prove a freeness theorem for low-rank subgroups of one-relator
  groups. Let ${\FF}$ be a free group, and let $w\in{\FF}$ be a
  non-primitive element. The \emph{primitivity rank} of $w$, $\pi(w)$,
  is the smallest rank of a subgroup of ${\FF}$ containing $w$ as an
  imprimitive element. Then any subgroup of the one-relator group
  $\GG={\FF}/\ncl{w}$ generated by fewer than $\pi(w)$ elements is
  free. In particular, if $\pi(w)>2$ then $\GG$ doesn't contain any
  Baumslag--Solitar groups.

  The hypothesis that $\pi(w)>2$ implies that the presentation complex
  $X$ of the one-relator group $\GG$ has \emph{negative immersions}:
  if a compact, connected complex $Y$ immerses into $X$ and
  $\chi(Y)\geq 0$ then $Y$ Nielsen reduces to a graph.
  
  The freeness theorem is a consequence of a dependence theorem for
  free groups, which implies several classical facts about free and
  one-relator groups, including Magnus' Freiheitssatz and theorems of
  Lyndon, Baumslag, Stallings and Duncan--Howie.
  
  The dependence theorem strengthens Wise's $w$-cycles conjecture,
  proved independently by the authors and Helfer--Wise, which implies
  that the one-relator complex $X$ has \emph{non-positive immersions}
  when $\pi(w)>1$.
\end{abstract}

%%%%%%%%%%%%%%%%%%%%%%%%s
%BODY%%%%%%%%%%%%%%
%%%%%%%%%%%%%

\section{Introduction}

\subsection{One-relator groups}
The beginnings of combinatorial group theory are often identified with
Dehn's articulation of the word, conjugacy and isomorphism problems
\cite{dehn_unendliche_1911}, and Magnus' solution of the word problem
for one-relator groups was an early triumph of the subject
\cite{magnus_identitatsproblem_1932}. The contemporary approach to
these decision problems takes the geometric route: to solve them in a
class of groups $\mathcal{C}$, one first shows that the groups in
$\mathcal{C}$ admit some kind of geometric structure. The fundamental
example is the class of word-hyperbolic groups, for which the word,
conjugacy and isomorphism problems have all been solved. Related
techniques can be applied to handle other important classes:
3-manifold groups, sufficiently small-cancellation groups and fully
residually free groups, to name a few.

After a century of progress, it is remarkable that the class of
one-relator groups is still almost untouched by geometric techniques,
and the conjugacy and isomorphism problems remain wide open. Many
one-relator groups are word-hyperbolic -- all one-relator groups with
torsion, and a randomly chosen one-relator group is $C'(1/6)$ -- but
there is also a menagerie of non-hyperbolic examples, including
Baumslag--Solitar groups, Baumslag's example
\cite{baumslag_non-cyclic_1969}, fundamental groups of two-bridge knot
complements, and the recent examples of Gardam--Woodhouse
\cite{gardam_geometry_2019}.

In this paper, we present theorems about the structure of one-relator
groups which begin to suggest a general geometric classification.  A
combinatorial map of 2-complexes $Y\to X$ is called an
\emph{immersion} if it is locally injective.  The starting point for
these results is a recent result established independently by the
authors \cite{louder-wilton} and by Helfer--Wise \cite{helfer-wise}:
the presentation complex ${X}$ of a torsion-free one-relator group has
\emph{non-positive immersions}, meaning that every connected, finite
2-complex ${Y}$ that immerses into ${X}$ either has $\chi({Y})\leq 0$
or Nielsen reduces\footnote{See Definition \ref{defn: Nielsen
    reduction} for the definition of Nielsen reduction.  For now it
  suffices to know that Nielsen reduction is stronger than homotopy
  equivalence .}  to a point.  We investigate the negatively curved
analogue of this definition.

\begin{definition}\label{defn: Negative immersions}
  A compact 2-complex $X$ has \emph{negative immersions} if, for every
  immersion from a compact, connected 2-complex $Y$ to $X$, either
  $\chi(Y)<0$ or $Y$ Nielsen reduces to a graph.
\end{definition}

On the face of it, negative immersions should be a difficult condition
to check, since it applies to all immersed compact complexes $Y$.
However, there turns out to be a connection with a quantity defined by
Puder \cite{puder_primitive_2014}

\begin{definition}\label{defn: Primitivity rank}
  Let ${\FF}$ be a free group and $w\in{\FF}$.  The \emph{primitivity
    rank} of $w$ is
  \[ 
  \pi(w)=\min\{\rk{\KK} \mid w\in \KK\leq{\FF}\mbox{ and }w\mbox{ not
    primitive in }\KK\}\in\NN\cup\{\infty\}~,
  \]
  where, by convention, $\pi(w)=\infty$ if $w$ is primitive in
  ${\FF}$, since in that case $w$ is primitive in every subgroup $\KK$
  containing $w$.  Note that $\pi(1)=0$, since $1$ is an imprimitive
  element of the trivial subgroup.
\end{definition}

The first main theorem of this paper tells us that negative immersions
for the presentation complex ${X}$ of a one-relator group
$\GG={\FF}/\ncl{w}$ are governed by $\pi(w)$.

\begin{theorem}[Negative immersions for one-relator groups]
  \label{thm: Negative immersions}
  The presentation complex of the one-relator group ${\FF}/\ncl{w}$
  has negative immersions if and only if $\pi(w)>2$.
\end{theorem}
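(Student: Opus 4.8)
The plan is to prove the two directions of the equivalence separately, with the forward direction ($\pi(w)>2 \Rightarrow$ negative immersions) being the substantive one. The backward direction should be comparatively easy: if $\pi(w)\leq 2$ then by definition of primitivity rank there is a subgroup $\KK<\FF$ of rank at most $2$ containing $w$ as an imprimitive element. Such a $\KK$ corresponds to an immersed subcomplex (a graph of rank $\leq 2$ together with the single $2$-cell attached along $w$ read in $\KK$) which I would exhibit as a compact connected $Y\immerses X$ with $\chi(Y)=1-\rk\KK+ (\text{something})\geq 0$ — more precisely, since $w$ is imprimitive in $\KK$, the one-relator complex on $\KK$ with relator $w$ has Euler characteristic $1-\rk\KK+1 = 2-\rk\KK \geq 0$, and this $Y$ does not Nielsen reduce to a graph (the $2$-cell cannot be collapsed because $w$ is not primitive, so $w$ is not part of a free basis and the attaching map is not homotopic into the $1$-skeleton). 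This $Y$ witnesses the failure of negative immersions. I should double-check whether the relevant $Y$ should instead be taken to have $\chi(Y)=0$ exactly, which happens when $\rk\KK=2$ and $w$ is imprimitive in a rank-$2$ subgroup; the rank-$1$ case ($\pi(w)=1$, i.e. $w$ a proper power) similarly gives an annulus or disc-with-cone-point-free model with $\chi\geq 0$.

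For the forward direction, I would argue the contrapositive: suppose $X$ does not have negative immersions, so there is a compact connected $Y\immerses X$ with $\chi(Y)\geq 0$ that does \emph{not} Nielsen reduce to a graph; I want to conclude $\pi(w)\leq 2$. The key tool must be the dependence theorem advertised in the abstract (a strengthening of the $w$-cycles conjecture), which presumably takes a complex $Y$ immersing into $X$ with controlled Euler characteristic and produces a subgroup of $\FF$ of small rank in which $w$ is imprimitive — exactly the data bounding $\pi(w)$. So the main steps are: (1) normalize $Y$, using that non-positive immersions is already known, so that $\chi(Y)\geq 0$ forces $\chi(Y)\in\{0\}$ after Nielsen reduction (a complex with $\chi>0$ that doesn't reduce to a point would contradict non-positive immersions, and $\chi=0$ is the boundary case that negative immersions is designed to detect); thus WLOG $\chi(Y)=0$ and $Y$ has at least one $2$-cell; (2) pass to the $2$-cells of $Y$: each $2$-cell of $Y$ maps to the unique $2$-cell of $X$, so its attaching map reads a cyclic conjugate of $w^{\pm1}$ in $\pi_1$ of the $1$-skeleton of $Y$, which is a free group $\FF_Y$ immersing into $\FF$; (3) apply the dependence theorem to the collection of lifts of $w$ thus obtained, with the Euler-characteristic bound $\chi(Y)=0$ feeding the hypothesis, to extract a subgroup $\KK<\FF$ of rank $\leq 2$ containing (a conjugate of) $w$ imprimitively; (4) conclude $\pi(w)\leq 2$. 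The honest difficulty is that we are not given the dependence theorem in this excerpt, so in writing the real proof I would either cite it directly (it is surely proved earlier in the paper) or, if this theorem is meant to be proved before it, reverse the logic — but given the abstract's framing, Theorem~\ref{thm: Negative immersions} is a downstream corollary of the dependence theorem.

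The step I expect to be the main obstacle is (1)–(3): precisely the passage from "$Y$ immerses into $X$ with $\chi(Y)\geq 0$ and is not Nielsen-equivalent to a graph" to "there is a genuine rank-$\leq 2$ subgroup of $\FF$ realizing $w$ imprimitively." Non-positive immersions gives $\chi(Y)\leq 0$ unless $Y$ collapses, and we need the refined bookkeeping — counting $2$-cells versus the first Betti number of the $1$-skeleton, and tracking how Nielsen reductions interact with the $2$-cells — to see that the extremal case $\chi(Y)=0$ with $2$-cells present is exactly where a dependence among the $w$-lifts must occur. Getting the constant right (why rank $2$, and not rank $3$, is the threshold corresponding to $\chi\geq 0$ rather than $\chi>0$) is the crux: it is the arithmetic identity that a one-relator complex built on a rank-$r$ subgroup has $\chi = 1-r+1 = 2-r$, so $\chi\geq 0 \iff r\leq 2$, combined with the subtlety that we must rule out $w$ being primitive in every rank-$2$ subgroup it lies in, which is where the \emph{dependence} (as opposed to mere $w$-cycles) strength is essential — a primitive lift would allow a further Nielsen reduction collapsing a $2$-cell, contradicting minimality of $Y$.
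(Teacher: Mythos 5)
Your proposal is correct and follows essentially the same route as the paper: Theorem \ref{thm: Negative immersions} is deduced from Lemma \ref{lem: Negative immersions}, whose proof is exactly your contrapositive argument --- collapse free faces, pass to the one-relator pushout, use Corollary \ref{monotonicity} (the dependence theorem) to get a subgroup $\pio{\folded}<\FF$ of rank at most $2-\chi(Y)\leq 2$ containing $w$, and rule out primitivity of $w$ there via Proposition \ref{prop: reducible iff basis} --- while the converse is witnessed by the complex $Q_i$ built over a rank-$\pi(w)$ subgroup, just as you describe. The one loose point is your appeal to ``minimality of $Y$'': $Y$ is not assumed minimal, and the correct mechanism is that a Nielsen reduction of the pushout pulls back through the immersion $Y\immerses \hatt{Y}^I$ to a reduction of $Y$ (Lemma \ref{lem: nielsenreduces}), contradicting the assumption that $Y$ does not reduce to a graph.
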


Thus, negative immersions can be determined in practice for the
presentation complexes of one-relator groups. There is an algorithm to
compute the primitivity rank $\pi(w)$ -- see Lemma \ref{lem: Finitely
  many w-subgroups} -- and furthermore it is often easy to compute it
by hand for small examples by considering how a map representing $w$
factors through immersions.
 
\begin{example}\label{eg: Concrete example}
  Let $w=acaba^{-1}b^{-1}c^{-1}\in \langle a,b,c\rangle=F_3$.  As
  usual, if $F_3$ is realised as the fundamental group of a bouquet
  $\Omega$ of 3 circles then $w$ can be represented by an immersion
  $S\immerses \Omega$ where $S$ is a graph homeomorphic to a circle.
  Represent a subgroup $\KK\leq\FF$ containing $w$ by a finite graph
  $\Gamma$ such that the map $w$ factors as
 \[
 S\immerses\Gamma\immerses\Omega\,.
 \]
 If the edges labelled by $a$ in $S$ are not all identified in
 $\Gamma$ then one such edge is only crossed once, which implies that
 $w$ is primitive in $\KK$. The same holds for $b$ and $c$, and this
 in turn implies that the map $\Gamma\to\Omega$ is the
 identity. Therefore $\pi(w)\geq 3$.  Furthermore, the Whitehead graph
 of $w$ is connected without cut vertices, so $w$ is not itself
 primitive. Therefore $\pi(w)=3$.
 \end{example}

The primitivity ranks of all words of length at most 16 in the free
group of rank 4 were computed by Cashen--Hoffmann
\cite{cashen_short_2020}.

Puder \cite[Corollary 8.3]{puder_expansion_2015} proved that a generic
word $w$ in a free group of rank $k$ has $\pi(w)=k$, so in particular,
when $\rank\FF>2$, a generic one-relator complex has negative
immersions.  Theorem \ref{thm: Negative immersions} follows from Lemma
\ref{lem: Negative immersions}, which is a finer classification of
immersions from complexes with sufficiently large Euler
characteristic.

Non-positive immersions constrains the subgroup structure of a
group. Recall that a group $G$ is called \emph{coherent} if every
finitely generated subgroup is finitely presented. Non-positive
immersions implies a homological version of coherence: if $X$ has
non-positive immersions, then the second homology group of any
finitely generated subgroup of $\pi_1X$ is finitely generated
\cite[Corollary 1.6]{louder-wilton}. Indeed, Wise conjectured that the
fundamental groups of complexes with non-positive immersions are
coherent. The authors and, independently, Wise, have shown that
one-relator groups with torsion are coherent
\cite{louder_one-relator_2018,wise_coherence_2018}.

Our next theorem asserts that negative immersions also constrain the
subgroup structure of a one-relator group. Recall that a group $G$ is
called \emph{$k$-free} if every subgroup generated by $k$ elements is
free. The \emph{rank} of a group $G$ is the minimal number of elements
needed to generate $G$, and is denoted $\rk{G}$.  {Note that, if $G$
  has a one-relator presentation with $n$ generators, then either $G$
  is free (of rank $n-1$) or $\rk G=n$; see Remark \ref{rem:
    One-relator rank} below.}

\begin{theorem}[Low-rank subgroups of one-relator groups]
  \label{thm: f.g. subgroups}
  \label{thm: k-free}
  Let $\GG={\FF}/\ncl{w}$ be a one-relator group with
  $\pi(w)>1$. There is a finite collection $P_1,\dotsc,P_n$ of freely
  indecomposable, one-relator subgroups of $\GG$, each of rank
  $\pi(w)$, with the following property.  Let $H\leq\GG$ be a finitely
  generated subgroup.
  \begin{enumerate}[(i)]
  \item If $\rk{H}<\pi(w)$ then $H$ is free.
  \item If $\rk{H}=\pi(w)$ then $H$ is either free or conjugate
    into some $P_i$.
  \end{enumerate}
  In particular, the one-relator group $\GG$ is $(\pi(w)-1)$--free.
\end{theorem}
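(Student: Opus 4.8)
Since $\pi(w)>1$, $w$ is not a proper power, so $\GG={\FF}/\ncl{w}$ is torsion-free; write $X$ for its presentation complex, with $1$-skeleton $X^{(1)}$ the rose on a basis of ${\FF}$. First I would pick generators $h_1,\dots,h_k$ of $H$ with $k=\rk{H}$ and a map of a rose $R_k$ into $X^{(1)}$ carrying the $i$-th petal to a loop representing $h_i$, so that $\pi_1(R_k)\to\pi_1(X)=\GG$ has image $H$. Then I would run a Howie-style maximal tower argument: repeatedly lift through the cover of $X$ determined by the image of $\pi_1$ of the current complex, restrict to the finite subcomplex that carries that image, and interleave Stallings folds and collapses of free faces. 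The source map stays $\pi_1$-surjective, a combinatorial complexity strictly decreases at each genuine step, and the top of a reduced tower over a one-relator complex maps $\pi_1$-injectively to $X$ (this is the input used for the Freiheitssatz and its relatives); so the process terminates in a compact, connected, reduced immersion $\rho\colon Y\immerses X$ with $\rho_*\colon\pi_1(Y)\xrightarrow{\ \sim\ }H$ after conjugating in $\GG$. If $Y$ is a graph then $H\cong\pi_1(Y)$ is free and we are done, so assume $Y$ has a $2$-cell --- which by reducedness maps homeomorphically onto the $2$-cell of $X$, its attaching loop reading $w$ in $Y^{(1)}$.

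\textbf{Step 2: Euler characteristic and the groups $P_i$.} As $\pi_1(Y)\cong H$ we have $b_1(Y)\le k$ and $b_2(Y)\ge 0$, whence
\[
  \chi(Y)=1-b_1(Y)+b_2(Y)\ \ge\ 1-k .
\]
Since $Y$ is reduced and not a graph, Lemma~\ref{lem: Negative immersions} applies and gives $\chi(Y)\le 2-\pi(w)$, with equality only when $Y$ is Nielsen equivalent to the presentation complex of a subgroup $\KK<{\FF}$ of rank $\pi(w)$ in which $w$ is imprimitive --- a \emph{critical} subgroup. By Lemma~\ref{lem: Finitely many w-subgroups} there are only finitely many conjugacy classes of such subgroups $\KK_1,\dots,\KK_n$; I set $P_i\define\KK_i/\ncl{w}$. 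Minimality of $\pi(w)$ forces $w$ to lie in no proper free factor of $\KK_i$, and $w$ is not a proper power, so together with $\rk{\KK_i}=\pi(w)\ge 2$ and torsion-freeness this makes $P_i$ a one-ended one-relator group; and running Step~1 on the presentation complex of $P_i$ shows $P_i$ embeds in $\GG$.

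\textbf{Step 3: conclusion.} Comparing the two estimates, $1-k\le 2-\pi(w)$, i.e.\ $\rk{H}\ge\pi(w)-1$. If $\rk{H}<\pi(w)$ then $k=\pi(w)-1$ and all inequalities are equalities, so $\chi(Y)=2-\pi(w)$ and Lemma~\ref{lem: Negative immersions} identifies $Y$ with the presentation complex of a critical subgroup; thus $H\cong P_i$. But a deficiency count gives $\rk{P_i}=\pi(w)$ (using that $P_i$ is not free), so $\rk{H}=\pi(w)>k$, a contradiction. Hence $Y$ was in fact a graph and $H$ is free, which proves (i) and, in particular, that $\GG$ is $(\pi(w)-1)$-free. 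If instead $\rk{H}=\pi(w)$ and $H$ is not free, then $Y$ is not a graph and $1-\pi(w)\le\chi(Y)\le 2-\pi(w)$. The case $\chi(Y)=2-\pi(w)$ is the equality case of Lemma~\ref{lem: Negative immersions}, so $Y$ is the presentation complex of some $\KK_i$, $H\cong P_i$, and tracing the $\pi_1$-injective immersion $Y\immerses X$ realises $H$ as conjugate in $\GG$ into $P_i$; the remaining case $\chi(Y)=1-\pi(w)$ forces $b_1(Y)=\pi(w)$ and $b_2(Y)=0$, a configuration that the finer part of Lemma~\ref{lem: Negative immersions} excludes or collapses to the critical case. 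This gives (ii).

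\textbf{The main obstacle.} The substantive content is Lemma~\ref{lem: Negative immersions} itself --- equivalently, the dependence theorem for free groups. Establishing $\chi(Y)\le 2-\pi(w)$ for a reduced immersed complex containing a $2$-cell, and classifying the extremal complexes so that the critical subgroups $\KK_i$ emerge, will require a delicate combinatorial analysis of how elevations of $w$ (the ``$w$-cycles'') can be routed through a folded graph; this refines both Puder's calculus of the primitivity rank and Wise's $w$-cycles conjecture, and pinning down the extremal configurations, where criticality appears, will be the subtlest point. A secondary technical issue is Step~1: one must check that the tower construction really returns a complex whose fundamental group is $H$, rather than just a free group surjecting onto $H$.
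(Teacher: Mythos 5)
Your Step 1 contains the decisive gap. You assume that a finitely generated subgroup $H<\GG$ can be realised, after a terminating tower/folding process, by a \emph{compact} complex $Y\immerses X$ with $\rho_*\colon\pi_1(Y)\xrightarrow{\sim}H$. Nothing guarantees termination, and more fundamentally such a compact $\pi_1$-isomorphic model would show $H$ is finitely presented, i.e.\ it would prove coherence of one-relator groups --- exactly the open conjecture of Wise cited in the introduction, not an available tool; the claim that ``the top of a reduced tower over a one-relator complex maps $\pi_1$-injectively'' is not a known input to the Freiheitssatz and is essentially what is to be proved. The paper avoids this entirely: Lemma~\ref{lem: Immersing fg subgroups} produces only a sequence of $\pi_1$-\emph{surjective} immersions $Z_0\immerses Z_1\immerses\cdots$ of finite complexes without free faces, with $f\colon H\to\GG$ factoring through $\pi_1(\varinjlim Z_i)$; one never needs $\pi_1(Z_i)\cong H$, only that each $\pi_1(Z_i)$ is generated by $\rk H$ elements and (by Hopficity of free groups) is eventually non-free if $f$ does not factor through a free group. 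The conclusion for an actual subgroup then follows because an injective $f$ factoring through a free group forces $H$ to be free.

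Even granting your Step 1, the Euler-characteristic bookkeeping in Steps 2--3 does not close. You only use $\chi(Y)=1-b_1+b_2\geq 1-\rk H$, whereas the paper's argument needs the sharper input of Howie's theorem (Lemma~\ref{homologylemma}): for an aspherical $2$-complex with $\pi_1$ generated by $n$ elements, $\chi\geq 1-n$ with equality \emph{only if} $\pi_1$ is free of rank $n$; this is what upgrades the bound to $\chi\geq 2-\rk H$ when the group is not free and makes case (i) immediate via Lemma~\ref{lem: Negative immersions}(i). Without it, your equality-case analysis fails: Lemma~\ref{lem: Negative immersions}(ii) only says $Y\immerses X$ \emph{factors through} some $Q_i$, so you may conclude $H$ is conjugate into $P_i$, not $H\cong P_i$, and hence no contradiction with $\rk H<\pi(w)$ arises; likewise the subcase $\chi(Y)=1-\pi(w)$ in (ii) is not ``excluded by the finer part of Lemma~\ref{lem: Negative immersions}'' (that lemma says nothing below $2-\pi(w)$) --- it is excluded precisely by Howie's theorem. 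Finally, the assertion that the $P_i$ are genuinely subgroups of $\GG$ is not free of charge either: in the paper it is a separate argument (Theorem~\ref{thm: Q is a subgroup}) using a van Kampen diagram, folding, and the same two lemmas, rather than a rerun of your Step 1.
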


The $P_i$ are defined in Subsection~\ref{subsection: primitivity rank
  and w subgroups}.  Theorem~\ref{thm: k-free} is a cousin of Magnus'
Freiheitssatz, which says that if $\HH$ is a proper free factor of a
free group ${\FF}$ and the natural map $\HH\to{\FF}/\ncl{w}$ is not
injective then $w$ is in fact conjugate into
$\HH$~\cite{magnus-freiheitssatz}.  Theorem~\ref{thm: k-free} follows
immediately from Lemma \ref{lem: Universal property of w-subgroups},
which applies to homomorphisms from groups of low rank to $\GG$.

As far as the authors are aware, Theorem \ref{thm: k-free} implies all
known $k$-freeness theorems for one-relator groups.  For instance,
combining Theorem \ref{thm: k-free} with \cite[Corollary
  8.3]{puder_expansion_2015} recovers the following theorem of
Arzhantseva--Olshanskii \cite{arzh-olsh}.

\begin{corollary}   \label{cor: k-free random}
A generic $k$--generator one-relator group is  $(k-1)$-free. 
\end{corollary}

Generic one-relator groups satisfy the $C'(1/6)$ small-cancellation
property. However, we emphasise that there are many words $w$ with
$\pi(w)>2$ that are not small-cancellation, such as Example \ref{eg:
  Concrete example}.

\begin{remark}\label{rem: pi is a group invariant}
  It follows immediately from Theorem \ref{thm: k-free} that $\pi(w)$
  is the minimal rank of a non-free subgroup of the one-relator group
  $G=\FF/\ncl{w}$. In particular, $\pi(w)$ is an isomorphism invariant
  of $G$.
\end{remark}

Taken together, Theorems \ref{thm: Negative immersions} and \ref{thm:
  k-free} imply that one-relator groups with negative immersions have
a similar subgroup structure to hyperbolic groups.

\begin{corollary}\label{cor: Subgroups of NI groups}
  Let $w$ be an element of a free group ${\FF}$. If the one-relator
  group $\GG={\FF}/\ncl{w}$ has negative immersions then $\GG$ doesn't
  contain any Baumslag--Solitar groups and any finitely generated
  abelian subgroup of $\GG$ is {cyclic}.
\end{corollary}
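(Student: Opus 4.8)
The plan is to deduce Corollary~\ref{cor: Subgroups of NI groups} from Theorems~\ref{thm: Negative immersions} and~\ref{thm: k-free} by reducing both assertions to statements about finitely generated---indeed, $2$-generated---subgroups. First I would observe that by Theorem~\ref{thm: Negative immersions}, the hypothesis that $\GG = {\FF}/\ncl{w}$ has negative immersions is exactly the statement $\pi(w) > 2$. Thus Theorem~\ref{thm: k-free} applies with $\pi(w) \geq 3$, and in particular $\GG$ is $(\pi(w)-1)$--free, hence at least $2$--free: every $2$-generated subgroup of $\GG$ is free.

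For the Baumslag--Solitar claim, recall that $BS(m,n) = \langle a, t \mid t a^m t^{-1} = a^n\rangle$ is $2$-generated but not free (it contains the non-abelian, non-free group $\ZZ[1/m] \rtimes \ZZ$ when $|m|\ne|n|$, and $\ZZ^2$ or a Klein-bottle group when $|m|=|n|$; in all cases $BS(m,n)$ has a presentation with one relator that is not a proper power unless $|m|=|n|=1$, and it is never free since it has nontrivial $H_2$ or contains $\ZZ^2$). So if $BS(m,n) < \GG$, then taking its standard two generators produces a $2$-generated non-free subgroup of $\GG$, contradicting $2$--freeness. For the abelian claim, let $A < \GG$ be abelian; I would show $A$ is locally cyclic by showing every finitely generated subgroup of $A$ is cyclic. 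Such a subgroup $A_0$ is finitely generated abelian; pick any two elements $x, y \in A_0$. The subgroup $\langle x, y\rangle$ is $2$-generated, hence free by $2$--freeness, and also abelian, hence free abelian of rank $\le 1$, so it is cyclic; in particular $x$ and $y$ lie in a common cyclic subgroup. Since this holds for every pair in the finitely generated abelian group $A_0$, and a finitely generated abelian group in which every pair of elements generates a cyclic group must itself be cyclic, we conclude $A_0$ is cyclic. Hence $A$ is locally cyclic.

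There is essentially no serious obstacle here: the corollary is a soft consequence of the two main theorems, and the only points requiring a little care are the elementary structural facts about $BS(m,n)$ (that it is never free) and the passage from ``every pair generates a cyclic subgroup'' to ``locally cyclic'' for abelian groups. I would spell these out briefly but not belabor them. One subtlety worth a sentence: for $BS(1,1) \cong \ZZ^2$ the argument via ``not free'' still works since $\ZZ^2$ is not free; and conversely $\ZZ \cong BS(1,-1)$-type degenerate cases are free and not genuine Baumslag--Solitar groups, so no contradiction is claimed there. Thus the statement is clean exactly because we restrict to the non-free $BS(m,n)$.
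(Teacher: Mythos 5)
Your proposal is correct and follows exactly the route the paper intends (the corollary is stated as an immediate consequence of Theorems \ref{thm: Negative immersions} and \ref{thm: k-free}): negative immersions gives $\pi(w)>2$, hence $\GG$ is $2$-free; Baumslag--Solitar subgroups are ruled out because they are $2$-generated and never free, and abelian subgroups are locally cyclic because every $2$-generated subgroup is free and abelian, hence cyclic. One minor slip in a side remark: $BS(1,-1)$ is the Klein bottle group, not $\mathbb{Z}$ (the genuinely degenerate free cases are those with $m$ or $n$ equal to $0$), but this does not affect the argument, since the fact you actually need---that every Baumslag--Solitar group $BS(m,n)$ with $m,n\neq 0$ is non-free---is standard and true.
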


A famous question in geometric group theory asks whether or not a
group with a finite classifying space and without Baumslag--Solitar
subgroups must be hyperbolic \cite[Question
  1.1]{bestvina_questions_????}.  Lyndon's identity theorem implies
that presentation complexes of torsion-free one-relator groups are
classifying spaces, so in light of Corollary \ref{cor: Subgroups of NI
  groups}, the case of one-relator groups with negative immersions is
of immediate interest.

\begin{conjecture}\label{conj: One-relator hyperbolization}
  Every one-relator group with negative immersions is hyperbolic.
\end{conjecture}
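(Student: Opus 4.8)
The plan is to prove the conjecture by induction along the Magnus--Moldavanskii hierarchy of the one-relator group $\GG=\FF/\ncl{w}$, feeding the negative immersions hypothesis into the geometric side-conditions of a combination theorem at each stage. Since $\GG$ has negative immersions we have $\pi(w)>2$ by Theorem~\ref{thm: Negative immersions}; we may assume $w$ is neither primitive (else $\GG$ is free, hence hyperbolic) nor a proper power (a proper power has primitivity rank $1$), so by Lyndon's identity theorem the presentation complex is aspherical and $\GG$ is torsion-free of cohomological dimension two. Moreover, by Corollary~\ref{cor: Subgroups of NI groups}, $\GG$ contains no Baumslag--Solitar subgroup and every abelian subgroup is locally cyclic, hence (being finitely generated whenever it is finitely generated) no $\mathbb{Z}^2$. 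The first step is then the classical one: after a change of variables, write $\GG=\GG'\ast_{\varphi}$ as an HNN extension of a one-relator group $\GG'$ whose defining relator is a strictly shorter Magnus rewriting of $w$, the stable letter conjugating one finitely generated Magnus subgroup of $\GG'$ to another; iterate until the hierarchy terminates in a free group.

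The inductive step is an application of the Bestvina--Feighn combination theorem to $\GG=\GG'\ast_{\varphi}$, which requires three inputs: (a) that $\GG'$ is hyperbolic; (b) that the associated edge groups are quasiconvex in $\GG'$; and (c) the annuli-flare (``hallways flare'') condition. For (a) one wants negative immersions to descend to $\GG'$, which amounts to a combinatorial lemma asserting that the primitivity rank of the shorter relator $w'$ remains $>2$; this should follow from the same circle of ideas used in the paper, since the Magnus subgroup of $\GG'$ embeds in $\GG$ and Theorem~\ref{thm: k-free} sharply constrains which rank-$2$ subgroups of $\GG$ can witness imprimitivity of $w'$. For (b), the edge groups are Magnus subgroups, which in the free base case are free factors, hence automatically quasiconvex, and in general should be quasiconvex because the whole hierarchy is a quasiconvex hierarchy in the sense of Wise; establishing this in the correct order is itself part of the work.

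The main obstacle is condition (c). The absence of Baumslag--Solitar subgroups is necessary for hallways flare but not, on its own, known to be sufficient --- this is essentially the content of Bestvina's Question~1.1, open in general. The point of negative immersions is that it should make the flaring \emph{effective}: a failure of hallways flare would produce, for arbitrarily large $n$, a ``thick'' bi-infinite annular band of width $n$ mapping to the presentation complex $X$; truncating and doubling appropriately yields an immersed compact $2$-complex $Y\immerses X$ assembled from essential annuli with $\chi(Y)\geq 0$. By Definition~\ref{defn: Negative immersions} such a $Y$ Nielsen reduces to a graph, yet it carries an essential annular subsurface, and the finer classification of Lemma~\ref{lem: Negative immersions} should rule this configuration out, giving a contradiction. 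Converting this outline into a genuine quantitative flaring inequality --- controlling the rate at which the band width must grow and matching it to the Euler-characteristic bookkeeping of Lemma~\ref{lem: Negative immersions} --- is the crux, and is where most of the new work lies.

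A complementary strategy worth pursuing in parallel avoids the combination theorem: establish that $\GG$ acts properly and cocompactly on a CAT(0) cube complex by running the quasiconvex hierarchy above (in the style of Wise), and then deduce hyperbolicity from the absence of $\mathbb{Z}^2$-subgroups via the flat plane theorem for cocompactly cubulated groups (Caprace--Sageev). The obstruction there migrates to showing the hierarchy is \emph{malnormal} and quasiconvex at every stage; here again the low-rank freeness of Theorem~\ref{thm: k-free} and the Euler-characteristic count of Lemma~\ref{lem: Negative immersions} appear to be exactly the right tools, since a failure of malnormality of a Magnus subgroup would again manifest as a low-complexity immersed complex of nonnegative Euler characteristic carrying an essential annulus.
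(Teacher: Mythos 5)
This statement is a \emph{conjecture} in the paper: the authors explicitly leave it open (it is posed alongside Bestvina's question on groups with no Baumslag--Solitar subgroups), so there is no proof in the paper to compare against, and your text must therefore stand on its own as a complete argument. It does not: it is a research programme whose decisive steps are deferred (you say yourself that converting the outline into a flaring inequality is ``where most of the new work lies''). The central gap is the hallways-flare step. Negative immersions (Definition~\ref{defn: Negative immersions}) does not forbid immersed compact complexes with $\chi\geq 0$; it only says that such complexes Nielsen reduce to graphs. A compact annular band has $\chi=0$ and typically \emph{does} Nielsen reduce to a graph (a circle), so ``carries an essential annulus'' is in no tension with the conclusion of Definition~\ref{defn: Negative immersions} or Lemma~\ref{lem: Negative immersions}, and no contradiction arises. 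Moreover, the complexes produced by truncating thick hallways are not immersed a priori; folding them (Lemma~\ref{lem: Folding 2-complexes}) can change the Euler characteristic and destroy the annular structure, so you cannot even feed them into the negative-immersions hypothesis without further work. In effect, the implication ``no Baumslag--Solitar subgroups (plus negative immersions) $\Rightarrow$ annuli flare'' is exactly the open content of the problem, and nothing in the proposal supplies it.

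There are secondary gaps as well. The inductive hypothesis is not shown to descend along the Magnus--Moldavanskii hierarchy: you need $\pi(w')>2$ for the shorter relator of $\GG'$, but primitivity rank is not monotone under the rewriting, and Theorem~\ref{thm: k-free} constrains subgroups of $\GG$, not of the intermediate groups $\GG'$, so the ``combinatorial lemma'' you invoke is unproved (and is a genuine difficulty, not a formality). Quasiconvexity of the Magnus edge subgroups at each stage is likewise asserted rather than proved; in the non-free stages it is not automatic and is part of what any hierarchy-based proof must establish. The overall strategy (hierarchy plus a combination/cubulation argument, with negative immersions used to control annuli and malnormality) is a reasonable and, in hindsight, fruitful direction, but as written it is an outline of a possible proof, not a proof, and the paper itself claims nothing stronger than the conjecture.
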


A positive resolution of Conjecture \ref{conj: One-relator
  hyperbolization} would resolve the conjugacy and isomorphism
problems for the class of one-relator groups with negative immersions.
Of course, one can also ask whether one-relator groups with negative
immersions have other conjectural properties of hyperbolic groups,
such as residual finiteness and surface subgroups.
 
Since $\pi(w)=1$ if and only if the corresponding one-relator group
has torsion, and these are known to be hyperbolic by the B. B. Newman
Spelling Theorem~\cite{newman-spelling,hruska-wise-spelling}, the
remaining case of interest is $\pi(w)=2$.  One-relator groups with
primitivity rank two seem to behave differently than the rest; we
state a mild strengthening of Theorem~\ref{thm: f.g. subgroups} in
this case.

\begin{corollary}\label{cor: NN associate properties}
  Let $w$ be an element of a free group ${\FF}$. If $\pi(w)=2$ then
  the one-relator group $\GG={\FF}/\ncl{w}$ contains a subgroup
  $P\leq\GG$ with the following properties:
 \begin{enumerate}[(i)]
 \item $P$ is a two-generator, one-relator group;
 \item every two-generator subgroup of $\GG$ is either free or
   conjugate into $P$.
 \end{enumerate}
\end{corollary}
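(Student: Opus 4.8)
The plan is to obtain $P$ directly from Theorem~\ref{thm: k-free}. Applying that theorem with $\pi(w)=2$ produces a finite family $P_1,\dots,P_n$ of one-ended, one-relator subgroups of $\GG$ such that every finitely generated subgroup of rank at most $1$ is free and every subgroup of rank $2$ is either free or conjugate into some $P_i$. First I would check that each $P_i$ already satisfies part~(i): by the construction in Subsection~\ref{subsection: primitivity rank and w subgroups}, $P_i$ is the image in $\GG$ of a $w$-subgroup $\KK_i<\FF$ of rank $2$, so it is generated by two elements; being one-ended it is not free, so it has rank exactly $2$; and the universal property of $w$-subgroups (Lemma~\ref{lem: Universal property of w-subgroups}), applied to the natural map $\KK_i\to\GG$, identifies its kernel with the normal closure of $w$ in $\KK_i$, so that $P_i$ has the two-generator, one-relator form required by~(i). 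It therefore remains to replace the family by a single member.

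To that end, note that each $P_i$ is itself a non-free subgroup of rank $2$, hence conjugate into some $P_j$; passing to a minimal subfamily that still contains, up to conjugacy, every non-free rank-$2$ subgroup of $\GG$, I may assume no member of it is conjugate into another. Relabel this subfamily as $P_1,\dots,P_m$. If $m=1$ we are done: setting $P:=P_1$, every two-generator subgroup of $\GG$ has rank at most $2$, so it is free unless it has rank $2$ and is non-free, in which case it is conjugate into $P$. So the corollary reduces to the assertion that, when $\pi(w)=2$, this minimal subfamily is a single conjugacy class.

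I expect this uniqueness statement to be the main obstacle, and I would attack it through the $w$-subgroup machinery rather than downstairs in $\GG$. Suppose two members $P_1,P_2$ of the subfamily were non-conjugate with neither conjugate into the other, and pull them back to $w$-subgroups $\KK_1,\KK_2<\FF$ of rank $2$. Since $w$ lies in $\KK_1\cap\KK_2$, one can compare the core graphs of $\KK_1$ and $\KK_2$ along the loop reading $w$, using the fibre-product techniques that prove Lemma~\ref{lem: Universal property of w-subgroups}, to show that there is essentially one way to make $w$ imprimitive in a subgroup of rank $2$: every such subgroup of $\FF$ is, up to conjugacy, contained in one maximal $w$-subgroup $\KK$ of rank $2$. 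Feeding the natural map $\KK_1\to\GG$ into Lemma~\ref{lem: Universal property of w-subgroups} would then force $P_1$ to be free or conjugate into the image of $\KK$, hence into the member of the subfamily containing $P_2$, contradicting our choice. The delicate point is precisely this \emph{uniqueness of the rank-$2$ $w$-subgroup up to conjugacy} --- intuitively, $w$ can be made imprimitive in a rank-$2$ subgroup in essentially only one way --- which is special to the value $\pi(w)=2$ and which I expect either to be built into Lemma~\ref{lem: Universal property of w-subgroups} in that case, or to follow from the rank-$2$ case of the analysis developed for Theorem~\ref{thm: k-free}. Granting it, $m=1$, and Corollary~\ref{cor: NN associate properties} follows with $P=P_1$.
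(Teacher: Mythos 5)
Your outline is right up to the decisive step, but that step is missing. Everything before it --- the family $P_1,\dotsc,P_n$ from Theorem~\ref{thm: k-free}, the fact that each $P_i=\KK_i/\ncl{w}$ is a two-generator one-relator group --- is fine (though note that the injectivity of $P_i\to\GG$, and hence the one-relator structure of $P_i$ \emph{as a subgroup}, comes from Theorem~\ref{thm: Q is a subgroup}, not from Lemma~\ref{lem: Universal property of w-subgroups}, which says nothing about the kernel of $\KK_i\to\GG$). The genuine gap is the claim you flag yourself and then ``grant'': that when $\pi(w)=2$ there is essentially only one rank-two subgroup of $\FF$ in which $w$ is imprimitive, so that the family collapses to a single $P$. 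This is not built into Lemma~\ref{lem: Universal property of w-subgroups}, nor does it fall out of comparing core graphs of $\KK_1$ and $\KK_2$ along the loop reading $w$ via fibre products: the join of two rank-two subgroups sharing an element has rank three in general, and nothing in the folding/fibre-product formalism by itself rules that out here. In the paper this is exactly Proposition~\ref{prop: Maximal rank-two}, and its proof is a direct application of the main dependence theorem (Theorem~\ref{introthm: main}): since $\pi(w)=2$ forces $w$ to be indivisible, if $w$ is contained and imprimitive in rank-two subgroups $\HH_1$ and $\HH_2$, the theorem gives
\[
1\leq(\rk{\HH_1}-1)+(\rk{\HH_2}-1)-(\rk{\langle\HH_1,\HH_2\rangle}-1)~,
\]
so $\rk{\langle\HH_1,\HH_2\rangle}=2$ and $w$ is still imprimitive there; combined with the finiteness argument of Lemma~\ref{lem: Finitely many w-subgroups}, the (finite, non-empty) poset of such subgroups is directed and has a unique maximal element, which is the unique $w$--subgroup. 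Without invoking this dependence-theorem input (or reproving it), your reduction to $m=1$ does not go through, so the proposal as written proves only Theorem~\ref{thm: k-free} restated, not the corollary.
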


We call the subgroup $P$ the \emph{peripheral subgroup} of $\GG$ (we
cannot currently prove that $P$ is an isomorphism invariant of $\GG$).
We are unable to say anything new about two-generator one-relator
groups -- note that, in this case, Corollary~\ref{cor: Subgroups of NI
  groups} is vacuous and Corollary \ref{cor: NN associate properties}
is trivially true.

\begin{example}
  Take $\KK$ to be a rank-two free factor $\langle x,y\rangle$ of a
  free group $\FF$, and let $w=[x,y]$. In this case, $G=\FF/\ncl{w}$
  is the free product of $P=\KK/\ncl{w}\cong\ZZ^2$ together with a
  complementary free-group factor, and Corollary \ref{cor: NN
    associate properties} implies that every freely indecomposable
  two-generator subgroup of $G$ is conjugate into $P$. In this case,
  the conclusion also follows from the Kurosh subgroup theorem;
  however, in more complicated examples, $G$ will not split as a free
  product and the Kurosh subgroup theorem will not apply.
\end{example}

Corollary \ref{cor: NN associate properties} suggests the following
natural counterpart to Conjecture \ref{conj: One-relator
  hyperbolization}.

\begin{conjecture}\label{conj: rel hyp}
  Suppose $\pi(w)=2$. Then $\GG$ is hyperbolic relative to $P$.
\end{conjecture}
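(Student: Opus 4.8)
The plan is to verify one of the standard characterisations of relative hyperbolicity for the pair $(\GG,P)$, using a relative upgrade of the negative-immersions technology developed in this paper. The cleanest target is Osin's characterisation: a finitely generated group is hyperbolic relative to a finitely generated subgroup exactly when it admits a finite relative presentation with linear relative Dehn function, and in that case almost malnormality of the conjugates of $P$ is automatic. So the whole problem reduces to proving a \emph{relative linear isoperimetric inequality} for $\GG$ over $P$. A second, perhaps more robust, route runs through the Magnus--Moldavanskii hierarchy together with a combination theorem for relatively hyperbolic groups, sketched below.

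For the isoperimetric route, first record the structural input. Since $\pi(w)=2$ the relator $w$ is not a proper power in $\FF$ (a proper power would give $\pi(w)\le 1$), so $P=K/\ncl{w}$ for a suitable rank-two $K<\FF$ with $w$ imprimitive but not a proper power in $K$; thus $P$ is a torsion-free, one-ended, two-generator one-relator group. Take a relative presentation of $\GG$ whose generating set is that of $\FF$ together with a generating set of $P$, with relations $w$ together with all relations of $P$. Given a word $u$ representing $1$ in $\GG$, choose a reduced van Kampen diagram $D$ for $u$ over the ordinary presentation $\langle\FF\mid w\rangle$ and reorganise it as a relative diagram: replace each maximal subdiagram that is carried by a single coset of $P$ by one peripheral face, so that the relative area is the number of remaining $w$-cells. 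The crux is to bound this linearly in $|u|$. Here Lemma~\ref{lem: Negative immersions}, the $\pi(w)=2$ refinement underlying Theorem~\ref{thm: Negative immersions}, enters. Folding a reduced disc diagram produces an immersed complex $Y\immerses X$; if the relative area were super-linear in the boundary length, combinatorial Gauss--Bonnet would locate a subdiagram folding to such a $Y$ with $\chi(Y)\ge 0$, and the lemma forces $Y$ either to Nielsen reduce to a graph (in which case the corresponding piece of $D$ is excised without changing $u$) or to have $\pi_1(Y)$ mapping into a conjugate of $P$ (in which case the piece is absorbed into a peripheral face and leaves the relative count). Iterating this curvature/counting argument, now localised away from the peripheral faces and modelled on the proof of the $w$-cycles inequality, yields $\mathrm{Area}_{\mathrm{rel}}(u)\le C|u|$, and Osin's theorem gives relative hyperbolicity.

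The alternative route is inductive along the one-relator hierarchy. By Magnus--Moldavanskii, after a change of generators $\GG$ is an HNN extension of a one-relator group $\GG_1$ with a shorter relator, the edge group being a Magnus subgroup and hence free; by induction on relator length $\GG_1$ is either hyperbolic or hyperbolic relative to its own peripheral subgroup $P_1$; since free edge groups are quasiconvex and one can verify the required malnormality of the edge inclusions, Dahmani's combination theorem assembles a relatively hyperbolic structure on $\GG$ whose peripheral subgroups are built from $P_1$ together with the stable-letter (Baumslag--Solitar-type) subgroups, and a final step merges these, using the low-rank classification of Theorem~\ref{thm: k-free} and Corollary~\ref{cor: NN associate properties}, into the single peripheral subgroup $P$.

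I expect the main obstacle, in either route, to be controlling the interface between the relator and the peripheral subgroup $P$. In the isoperimetric route this is a bounded-coset-penetration phenomenon: one must prevent, or simplify away, diagrams whose boundary enters and leaves a single $P$-coset many times, and one must make the diagram-to-immersion dictionary function when boundary arcs are words in $P$ rather than in $\FF$. In the hierarchy route the obstacle is that the Magnus subgroups used to build $\GG$ need not be aligned with $P$, so the malnormality/acylindricity hypotheses of the combination theorem must be rechecked at each stage, and in the base cases one must correctly identify $P$ precisely when it is itself non-hyperbolic — a Klein-bottle group or a Baumslag--Solitar group — which is exactly where the fine structure of low-rank subgroups established here is needed. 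In both approaches the heart of the matter is a relative version of the $w$-cycles count, carried out modulo the peripheral subgroup.
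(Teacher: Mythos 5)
First, a point of order: this statement is Conjecture~\ref{conj: rel hyp}, which the paper explicitly leaves open --- there is no proof in the paper to compare against, and the authors' own discussion (e.g.\ the remark that malnormality of $P$ would only \emph{follow} if the conjecture held) makes clear they do not know how to establish it. What you have written is a programme, not a proof: both of your routes defer exactly the steps that constitute the open problem, and you acknowledge as much in your final paragraph. So the verdict has to be that there is a genuine gap --- indeed the entire quantitative core of the argument is missing.

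Concretely: in the isoperimetric route, the step ``if the relative area were super-linear, combinatorial Gauss--Bonnet would locate a subdiagram folding to an immersed $Y\immerses X$ with $\chi(Y)\geq 0$'' is not justified and is where the difficulty lives. Lemma~\ref{lem: Negative immersions} is a qualitative statement about compact immersed complexes with no free faces and $\chi(Y)\geq 2-\pi(w)$; it gives no uniform bound on the size or number of $w$-cells of such $Y$, and no mechanism for extracting such a $Y$ from a large reduced diagram --- a disc diagram has $\chi=1$ but folding it can destroy all area information, and a subdiagram whose fold is Nielsen-reducible cannot simply be ``excised without changing $u$'' (Nielsen reduction is a statement about the attaching maps forming a sub-basis, not about the diagram's boundary word or area). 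Passing from non-positive/negative immersions to a linear (relative) isoperimetric inequality is precisely the content of Conjectures~\ref{conj: One-relator hyperbolization} and~\ref{conj: rel hyp}, and it required genuinely new tools beyond this paper (uniform versions of negative immersions, quasiconvex one-relator hierarchies) in subsequent work. There is also a presentation-theoretic wrinkle you elide: since $w$ lies in the $w$-subgroup $\KK$ and $P=\KK/\ncl{w}$, in a finite relative presentation of $\GG$ over $P$ the relator $w$ is a consequence of the peripheral and identification relations, so ``relative area $=$ number of remaining $w$-cells'' needs to be set up with care before one can even state what is to be bounded, and bounded coset penetration is not addressed. In the hierarchy route, the Magnus subgroups arising in the Magnus--Moldavanskii HNN decomposition are in general exponentially distorted and far from (almost) malnormal, so the hypotheses of Dahmani-type combination theorems simply fail as stated; strengthening the inductive hypothesis so that the edge groups are quasiconvex relative to the inherited peripheral structure, and then identifying the resulting peripherals with the single subgroup $P$ of Corollary~\ref{cor: NN associate properties}, are again exactly the open points, not routine verifications. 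Your structural observations (that $\pi(w)=2$ forces $w$ to be imprimitive but not a proper power, that $P$ is a one-ended two-generator one-relator group, Theorem~\ref{thm: Q is a subgroup}, and the degenerate case $P=\GG$ where the claim is vacuous) are correct, but they are inputs to the problem rather than progress on it.
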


Conjectures~\ref{conj: One-relator hyperbolization} and~\ref{conj: rel
  hyp} provide a conceptual explanation for the fact that all known
examples of pathological one-relator groups have two generators.

\subsection{The dependence theorem}

In 1959, Lyndon proved that a non-trivial commutator in a free group
$\FF$ cannot be expressed as a square \cite{lyndon-abc}.  In this
paper, we view Lyndon's theorem as the first in a line of
\emph{dependence theorems} for free groups, which bound the rank of
the target of a homomorphism in which certain elements are forced
either to be conjugate or to have roots.

\begin{thm}[Lyndon, 1959]
  Let $\HH=\langle a,b\rangle$, $v=[a,b]$, and consider the group
  $G=\HH*_{v=x^n}\langle x\rangle$ where $n=2$. If $f\colon G \to F$
  is a surjective homomorphism onto a free group then $\rk{F}\leq 1$.
\end{thm}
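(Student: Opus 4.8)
The plan is to bound $\rk{F}$ via the lower central series of a free group, after a crude homological bound. Write $\pio{\Delta}=\langle a,b,w\mid [a,b]=w^2\rangle$, so that $H_1(\pio{\Delta};\mathbb{Z})\cong\mathbb{Z}^2\oplus\mathbb{Z}/2$, with the torsion summand generated by the class of $w$. Since $f$ is onto, $H_1(F;\mathbb{Z})\cong\mathbb{Z}^{\rk{F}}$ is a quotient of $\mathbb{Z}^2\oplus\mathbb{Z}/2$, so $\rk{F}\leq 2$; moreover $\pio{\Delta}$ is non-trivial (it surjects onto $\mathbb{Z}$). Thus the only real content is to rule out $\rk{F}=2$: this yields $\rk{F}\leq 1$, and $\rk{F}=1$ once one assumes $F$ non-trivial, as in the intended applications (e.g.\ Lyndon's $[a,b]=c^2$ with $[a,b]\neq 1$).

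So suppose $f\colon\pio{\Delta}\onto F$ with $F=\langle x,y\rangle$ free of rank $2$. Let $\Gamma_2\supseteq\Gamma_3$ be the second and third terms of the lower central series of $F$, and recall the Magnus--Witt identification: the commutator induces an isomorphism $\Lambda^2(F/\Gamma_2)\cong\Gamma_2/\Gamma_3$, so $\Gamma_2/\Gamma_3\cong\mathbb{Z}$ with generator the class of $[x,y]$. First, the class of $w$ is $2$-torsion in $H_1(\pio{\Delta};\mathbb{Z})$ whereas $H_1(F;\mathbb{Z})$ is torsion-free, so $f(w)\in\Gamma_2$. Next, $F/\Gamma_2$ is generated by the images of $f(a),f(b),f(w)$, the last of which is trivial; hence the images of $f(a)$ and $f(b)$ generate $F/\Gamma_2\cong\mathbb{Z}^2$ and therefore form a basis. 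Now compose $f$ with $F\onto F/\Gamma_3$: the relation $[a,b]=w^2$ becomes the equation $[f(a),f(b)]=f(w)^2$ in $F/\Gamma_3$, both sides lying in $\Gamma_2/\Gamma_3\cong\mathbb{Z}$. The left side is the class of the commutator of a basis, namely the exterior product of that basis, which is a generator $\pm[x,y]$; the right side is $2m$ times the generator for some $m\in\mathbb{Z}$, because $f(w)\in\Gamma_2$. Equating gives $\pm1=2m$ in $\mathbb{Z}$, which is impossible.

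The only delicate step is this exclusion of $\rk{F}=2$, and within it the use of surjectivity to pin down $[a,b]$ as a \emph{generator} of $\Gamma_2/\Gamma_3$ rather than merely a nonzero class; that is exactly what makes $\pm1=2m$ the sharp obstruction. Running the same computation with $w^n$ in place of $w^2$ returns $\pm1=nm$, recovering the classical fact that a nontrivial commutator in a free group is not a proper power, while $n=1$ returns $\pm1=m$ and correctly imposes nothing, since then $\pio{\Delta}$ is itself free of rank $2$. Geometrically the relation $[a,b]=w^2$ exhibits $\pio{\Delta}$ as the fundamental group of the closed non-orientable surface of Euler characteristic $-1$, and the argument above is the combinatorial shadow of the non-degeneracy of the mod-$2$ intersection form on its first homology --- a symmetric form on $(\mathbb{Z}/2)^3$ with no two-dimensional totally isotropic subspace --- but I find the lower-central-series version cleaner to write down and more robust.
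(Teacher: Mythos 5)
Your proof is correct, and it takes a genuinely different route from the paper's. The steps all check out: the abelianization $\mathbb{Z}^2\oplus\mathbb{Z}/2$ rules out $\rk{F}\geq 3$; torsion-freeness of $H_1(F)$ forces $f(w)\in\Gamma_2$; surjectivity then makes the images of $f(a),f(b)$ a basis of $F/\Gamma_2\cong\mathbb{Z}^2$; and the Magnus--Witt identification $\Gamma_2/\Gamma_3\cong\Lambda^2(F/\Gamma_2)\cong\mathbb{Z}$ turns the relation $[a,b]=w^2$ into $\pm1=2m$, a contradiction. (The residual point that a surjection onto the trivial group has $\rk{F}=0$ is an artifact of how the statement is phrased, and you flag it honestly.) By contrast, the paper never proves Lyndon's theorem by a bespoke argument: it is derived, via Baumslag's theorem, as an immediate special case of the dependence theorem (Theorem \ref{introthm: main}, proved as Theorem \ref{maintheorem}) --- take $l=1$, $\HH=\langle a,b\rangle$, $v=[a,b]$ dependent, $n=2$, noting that if $f\vert_{\HH}$ is not injective the conclusion is automatic --- and that theorem rests on the graph-of-graphs/adjunction construction, stackings, and the up-down lemma. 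Your lower-central-series argument is much closer in spirit to Baumslag's original residual-nilpotence approach: it buys brevity, elementarity and self-containedness, but it exploits the special features of this case (rank two, a single relation, $[a,b]$ mapping to a generator of $\Lambda^2\mathbb{Z}^2$), and it does not produce the quantitative inequality for arbitrary malnormal dependent families that the paper's machinery gives and that the authors need for their applications to subgroups of one-relator groups.
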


\begin{remark}\label{rem: Dyck's theorem}
  Lyndon in fact proved this theorem for the word $v'=a^2b^2$. This is
  equivalent to the theorem for $v=[a,b]$, by Dyck's theorem.
\end{remark}

Shortly afterwards, the hypotheses of Lyndon's theorem were weakened
to cover the case when $n\geq 2$; see, for
example,~\cite[Lemma~36.4]{baumslag-aspects}.  The commutator
$v=[a,b]$ in Lyndon's theorem cannot be replaced by an arbitrary
element of the free group; indeed, adjoining a root to a generator $a$
exhibits a map in which the rank of the target group does not go down.
We therefore need a hypothesis that excludes generators.  Recall that
a collection of subgroups $\{M_1,\dotsc,M_n\}$ of a group $H$ is
called \emph{malnormal} if $M_i\cap hM_jh^{-1}\neq 1$ implies that
$i=j$ and $h\in M_i$, for any indices $i,j$ and $h\in H$.

\begin{definition}
  \label{def: independent dependent}
  A malnormal collection of cyclic subgroups $\{\langle v_j\rangle\}$
  of a group $H$ is called \emph{independent} if there exists a free
  splitting $\HH=\HH'*\langle v_k\rangle$ of $\HH$, for some $k$, with
  $v_j$ conjugate into $\HH'$ for $j\neq k$.  Otherwise, $\{\langle
  v_j\rangle\}$ is called \emph{dependent}.
\end{definition}

Note that a singleton $\{\langle v\rangle\}$ in a free group $H$ is
dependent if and only if $v$ is not primitive. Using the theory of
pro-$p$ groups, Baumslag generalized Lyndon's theorem to all dependent
malnormal singletons $\{\langle v\rangle\}$ \cite{baumslag}.

\begin{thm}[Baumslag, {1965}]
  Let $\HH$ be a free group, $\{\langle v\rangle\}$ dependent
  (i.e.\ not a primitive element) and malnormal (i.e.\ not a proper
  power) in $H$ and $n>1$.  If $G=\HH*_{v=x^n}\langle x\rangle$ and
  $f\colon G \to F$ is a surjective homomorphism onto a free group,
  then $\rk{F}<\rk{\HH}$.
\end{thm}

We now introduce the data for a more general dependence theorem.  Let
$\HH_1,\ldots, \HH_l$ be free groups and $\{\langle
v_{i,j}\rangle\}_{i=1\cdots l,j=1\cdots m_i}$ a malnormal collection
of non-trivial cyclic subgroups of $\HH_i$.  For each $i$ and $j$, let
$n_{i,j}$ be a positive integer. We associate a graph of groups
$\Delta=\Delta(\{H_i\},\langle x \rangle,\{\langle
v_{i,j}\rangle\},\{n_{i,j}\})$ to these data as follows. There are $l$
vertices labelled by the $\HH_i$, arranged around one central vertex
labelled $\langle x\rangle$. For each $i$ and $j$, there is an edge
which attaches the subgroup $\langle v_{i,j}\rangle$ to the
index-$n_{i,j}$ subgroup of the vertex group $\langle x\rangle$ via
the homomorphism mapping $v_{i,j}$ to $x^{n_{i,j}}$.\footnote{When
  $l=1$ or $m_i=1$ we will drop the indices $i$ or $j$ as appropriate,
  to minimize notation.}

A dependence theorem relates these data to the rank of a possible free
image of $\pio{\Delta}$.  For instance, Lyndon's theorem is the case
when $l=m=1$, $\HH=\langle a,b\rangle$, $v=[a,b]$ and $n=2$.  A more
general theorem of this form can be proved using the techniques of
\cite{adjoiningroots} (cf.\ Theorems 1.3 and 1.5 of that paper).
 
\begin{thm}[Louder, 2013]
  Let $\HH_1,\ldots, \HH_l$ be free groups, $\{\langle
  v_{i,j}\rangle\}$ a malnormal collection of non-trivial cyclic
  subgroups of $\HH_i$ and $n_{i,j}$ positive integers.  Let $\Delta$
  be the associated graph of groups and let $f\colon
  \pio{\Delta}\to\FF$ be a surjective homomorphism to a free group
  with $f\vert_{\HH_i}$ injective for each $i$.  If the family
  $\{\langle v_{i,j}\rangle\}$ is dependent for each $i$, and
  $\sum_{i,j} n_{i,j}>1$, then
  \[
    \rk{\FF}-1<\sum_{i}(\rk{\HH_i}-1)~.
  \]
\end{thm}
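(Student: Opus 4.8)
\textit{Reformulation.} The plan is to pass to a topological model for $\Delta$, reduce the statement to a strict Euler characteristic inequality, and attack that by Stallings-style folding. Build a graph of spaces $Z$ over the underlying graph of $\Delta$: over the vertex $\HH_i$ put a rose $Z_i$ with $\pio{Z_i}=\HH_i$, over the central vertex put a circle $Z_0$ with $\pio{Z_0}=\langle w\rangle$, and over the edge $(i,j)$ put an annulus $A_{i,j}$ whose two boundary circles map to $Z_i$ and to $Z_0$ realizing the conjugacy classes of $v_{i,j}$ and of $w^{n_{i,j}}$. Every edge map is $\pi_1$-injective, so $Z$ is aspherical, $\pio{Z}=\pio{\Delta}$, and $\chi(Z)=\sum_i\chi(Z_i)-\sum_{i,j}\chi(A_{i,j})=\sum_i(1-\rk{\HH_i})$. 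Realize $\FF$ by a rose $R$, so $\chi(R)=1-\rk{\FF}$, and realize $f$ by a cellular map $\phi\colon Z\to R$; the conclusion is then exactly the strict inequality $\chi(Z)<\chi(R)$. Note that surjectivity of $f$ by itself only gives $H_1(Z)\onto H_1(R)$, hence $\chi(Z)\le\chi(R)+b_2(Z)$, which is the wrong direction once $b_2(Z)>0$ (and $\pio{\Delta}$ can be a surface group); so the hypotheses must be used both to control $b_2(Z)$ and to produce a genuine drop.

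\textit{Folding.} Put $\phi$ into normal form by folding, carried out in the category of graphs of spaces over the underlying graph of $\Delta$, so that folds occur within the vertex spaces, within the annuli, and across incidences. Since $f|_{\HH_i}$ is injective, folding within $Z_i$ terminates with $\phi|_{Z_i}$ an immersion of a core graph, still with $\chi(Z_i)=1-\rk{\HH_i}$. The role of malnormality of $\{\langle v_{i,j}\rangle\}_j$ in $\HH_i$ is to keep the incidences efficient: it guarantees that distinct edge-ends at $Z_i$ are never folded onto one another and that each attaching circle stays immersed, so that folding the annuli against $Z_i$ can neither collapse two edges into one nor annihilate an annulus. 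After finitely many folds one obtains a factorization $Z\onto\bar Z\to R$ with $\bar Z\to R$ an immersion; no fold decreases the Euler characteristic, and the increases are localized at $Z_0$, at the annuli incident to it, and at their mutual overlaps.

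\textit{The estimate.} It remains to show that the folding bought at least one unit, i.e.\ $\chi(\bar Z)\le\chi(R)-1$. The hypothesis $\sum_{i,j}n_{i,j}>1$ is exactly what forces a first essential fold near $Z_0$: if $\sum_{i,j}n_{i,j}=1$ then $\pio{\Delta}\cong\HH$, $f$ is forced to be an isomorphism, and the inequality fails; once the sum exceeds $1$, either some annulus wraps $Z_0$ with degree $\ge2$ or there are at least two annuli, and in both cases a genuine fold at $Z_0$ is unavoidable. The remaining danger is that this unit is at once returned, which I claim happens only when, for some $i$, an annulus $A_{i,k}$ can be unwrapped at no cost --- and that is precisely the situation in which $\HH_i$ admits a free splitting $\HH_i=\HH_i'*\langle v_{i,k}\rangle$ with every other $v_{i,j}$ conjugate into $\HH_i'$, i.e.\ in which $\{\langle v_{i,j}\rangle\}$ is \emph{independent}. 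So the real content is the contrapositive: from a folded picture with $\chi(\bar Z)\ge\chi(R)$ one must read off either such a splitting of some $\HH_i$ or the degenerate case $\sum_{i,j}n_{i,j}=1$. I would organize this as an induction, removing the annuli one at a time (so that $\sum_i\rk{\HH_i}+\sum_{i,j}n_{i,j}$ drops) and checking that malnormality and dependence either persist in the smaller configuration cut out by the folds or are visibly violated there.

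\textit{Main obstacle.} The crux is this last inductive step: classifying the ``flat'' folded configurations --- those with $\chi(\bar Z)\ge\chi(R)$ --- and certifying that each forces some family $\{\langle v_{i,j}\rangle\}$ to be independent (or $\sum_{i,j}n_{i,j}=1$). A subsidiary technical nuisance is accounting for Euler characteristic honestly once annuli fold onto $1$-dimensional subcomplexes and $\bar Z$ is no longer built cleanly from annuli; I would sidestep this by working with the core of the relevant cover of $R$ and with the deficiency contributed by each annulus, rather than manipulating $\bar Z$ directly.
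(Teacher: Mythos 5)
Your reduction to the strict Euler characteristic inequality $\chi(Z)<\chi(R)$, and the observation that surjectivity of $f$ alone cannot give it, is the right starting point and matches the paper's setup (the paper realizes the data by an adjunction space and derives this statement as an immediate corollary of its dependence theorem, Theorem \ref{maintheorem}, via Theorem \ref{introthm: main}). But your argument stops exactly where the theorem begins. The entire content is the step you label ``the main obstacle'': showing that a folded/resolved configuration with no Euler characteristic drop forces some family $\{\langle v_{i,j}\rangle\}$ to be independent (or $\sum n_{i,j}=1$). You do not give an argument for this; you assert that a unit gained by a fold at $Z_0$ can only be ``returned'' when an annulus ``can be unwrapped at no cost'' and that this is precisely independence, but nothing in the sketch certifies either claim. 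This is not a routine verification: in the paper this is the dependence theorem itself, whose proof needs a stacking of the relator circle, the fiberwise filtration of the vertical graphs $\adj_x$, and the up-down lemma; the paper even remarks that the simpler ``cut-point/acylindricity'' argument of \cite{adjoiningroots} (which is how the original 2013 proof controls the flat case) does not extend, so some such mechanism must be supplied, not presumed.

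The proposed induction, ``remove the annuli one at a time and check that malnormality and dependence persist or are visibly violated,'' is also not justified and looks unlikely to close as stated. Deleting an edge of $\Delta$ changes $\pi_1(\Delta)$ and destroys surjectivity onto $\FF$ (you would at best get the inequality for the image subgroup, and relating its rank to $\rk\FF$ is another nontrivial step); more seriously, a dependent family can become independent after removing one $\langle v_{i,k}\rangle$, so the inductive hypothesis need not apply to the smaller configuration, and the degenerate outcomes of the induction are exactly the cases you must rule out. Finally, the bookkeeping you defer (``accounting for Euler characteristic honestly once annuli fold onto $1$-dimensional subcomplexes'') is not a side issue: controlling how much $\chi$ can rise during resolution is the quantitative heart of the problem, and malnormality enters the paper's proof only through diagrammatic irreducibility (no cancelling pair of $2$-cells), not by preventing folds of edge-ends in the vertex spaces as you claim. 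As it stands, the proposal is a plausible framing of the problem, not a proof.
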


Baumslag's theorem, and hence Lyndon's, follows immediately. Indeed,
if $f\vert_{\HH}$ is not injective, the conclusion holds
automatically, and otherwise the theorem applies.  A 1983 theorem of
Stallings in a similar spirit also follows {\cite[Theorem
    5.3]{stallings-surfaces}}; we discuss Stallings' theorem in
Subsection \ref{subsec: stallings}.

Another kind of dependence theorem constrains the integers $n_{i,j}$
in terms of the ranks of the $\HH_i$. A prototypical result here is
provided by a theorem of Duncan and Howie, which extends and
quantifies Lyndon's theorem by bounding from below the genus of a
proper power \cite{duncan-howie}.  {A special case of the}
Duncan--Howie theorem can be stated as follows.

\begin{thm}[Duncan--Howie, 1991]
  Let $\Sigma$ be a compact, orientable surface of genus $g$ with one
  boundary component; let $H=\pio{\Sigma}$ and let $\langle
  v\rangle=\pio{\partial\Sigma}$.  Let $\Delta$ be the graph of groups
  obtained by adjoining an $n$th root to $v$
  (i.e.\ $\Delta=\Delta(H,\langle x\rangle,\langle v\rangle,n)$) and
  $f\colon\pio{\Delta}\to F$ be a homomorphism onto a free group with
  $f(v)\neq 1$. Then $n \leq\rk{H}-1=2g-1$.
\end{thm}

Just as Lyndon's theorem was generalized from surfaces to more general
dependent malnormal families of cyclic subgroups, so the Duncan--Howie
theorem can be extended to arbitrary dependent malnormal families of
cyclic subgroups.  The following theorem, proved by the authors and
also Helfer--Wise, answered Wise's \emph{$w$-cycles conjecture}, which
was made in connection with the question of whether or not one-relator
groups are coherent \cite{helfer-wise,louder-wilton}.

\begin{thm}[Louder--Wilton, Helfer--Wise]
  Let $\HH$ be a free group, $\{\langle v_j\rangle\}$ a malnormal
  collection of non-trivial cyclic subgroups of $\HH$ and $n_j$
  positive integers.  Let $\Delta$ be the associated graph of groups
  and let $f\colon \pio{\Delta}\to\FF$ be a homomorphism to a free
  group with $f\vert_{\HH}$ injective.  If the family $\{\langle
  v_j\rangle\}$ is dependent then $\sum_{j} n_j\leq \rk{\HH}-1$.
\end{thm}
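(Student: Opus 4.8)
The plan is to recast the statement geometrically, as an assertion about immersions of graphs, to build an auxiliary $2$-complex that carries $\pio{\Delta}$ while recording the integers $n_j$, and then to extract the inequality from a reduced form of that complex by a combinatorial Euler characteristic count; this is the natural common generalization of the picture argument of Duncan--Howie, which is the special case in which $\HH$ is a surface group and $\{v\}$ its boundary. Fix a rose $R$ with $\pio{R}=\FF$. A malnormal cyclic subgroup of a free group is generated by an element which is not a proper power, and distinct members of a malnormal family are non-conjugate; so each $v_j$ is not a proper power and the $v_j$ are pairwise non-conjugate. Since $f|_\HH$ is injective, Stallings folding realizes it by an immersion $\iota\colon Z\immerses R$, where $Z$ is a finite connected core graph with $\pio{Z}\cong\HH$ and $-\chi(Z)=\rk{\HH}-1$; we may assume $\rk{\HH}\geq1$, as otherwise there is nothing to prove. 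Replacing $f(w)$ by its root in $\FF$ multiplies each $n_j$ by a fixed positive integer and changes none of the hypotheses on $\HH$ or $\{v_j\}$, so we may also assume that $u=f(w)$ is non-trivial and not a proper power; let $c\colon S^1\immerses R$ be the corresponding immersed circle. Each $v_j$ is then carried by an immersed loop $\gamma_j$ in $Z$, and, since $f(v_j)$ is conjugate to $u^{n_j}$, the composite $\iota\circ\gamma_j$ is freely homotopic to $c^{n_j}$.

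Next I would build the \emph{tube complex} that carries $\pio{\Delta}$. Let $S$ be a circle mapping homeomorphically to $c$, and for each $j$ attach an annulus $A_j$ by gluing one of its boundary circles to $Z$ along $\gamma_j$ and the other to $S$ by a degree-$n_j$ covering. The compact $2$-complex $Y=Z\cup S\cup\bigcup_j A_j$ is the total space of a graph of spaces whose vertex spaces are $Z$ and $S$ and whose edge spaces are the core circles of the $A_j$, so $\pio{Y}\cong\pio{\Delta}$ and $\chi(Y)=\chi(Z)+\chi(S)=1-\rk{\HH}$. The homomorphism $f$ is realized by a map $F\colon Y\to R$ which restricts to the immersion $\iota$ on $Z$ and to $c$ on $S$ and collapses each $A_j$ onto its core circle. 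In these terms the target inequality reads $\sum_j n_j\leq -\chi(Y)$.

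The heart of the argument is the analysis of a reduced representative of $(Y,F)$. I would fold $F$ relative to $Z$, where $F$ is already an immersion, to obtain a quotient complex through which $F$ factors and in which each $A_j$ becomes a ``band'' running $n_j$ times along the image of $c$; ordering these bands by the way their images overlap in $R$ yields a \emph{stacking}. One then distributes angles over the corners of this complex so that combinatorial Gauss--Bonnet equates the total curvature with $2\pi\chi(Y)$, verifies that no vertex and no $2$-cell carries positive curvature, and shows that each band must absorb at least one unit of strictly negative curvature for each of its $n_j$ passes along $c$; this gives $-\chi(Y)\geq\sum_j n_j$. The dependence hypothesis enters precisely here: the only way for a band to run along $c$ at no curvature cost is for it to be peeled off along a free splitting $\HH=\HH'*\langle v_k\rangle$ with the remaining $v_j$ conjugate into $\HH'$, and since $\{\langle v_j\rangle\}$ is dependent no such splitting exists. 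Malnormality is used to ensure that distinct bands are not parallel and cannot be absorbed into one another under folding, so that no $n_j$ is overcounted.

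The step I expect to be the main obstacle is this last one: controlling the combinatorics of the folded complex. Under folding the bands $A_j$ may become identified with $Z$ and with one another along long arcs, and one must show both that a consistent stacking always exists and that the curvature can be distributed with no positive contributions; the crucial point is to prove that any hypothetical failure of $\sum_j n_j\leq-\chi(Y)$ would manifest as a free splitting of $\HH$ that separates off one of the $v_j$, contradicting dependence. Checking that the preliminary reductions -- to $u$ not a proper power, and the passage between the $v_j$ and the immersed loops $\gamma_j$ -- preserve all the hypotheses is routine but should be carried out with care.
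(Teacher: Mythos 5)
Your preliminary reductions and bookkeeping are fine and essentially match the paper's own setup: realizing $f\vert_{\HH}$ by an immersion of a core graph, passing to the indivisible root of $f(w)$ (which only strengthens the desired inequality), building the tube complex $Y$ with $\pio{Y}\cong\pio{\Delta}$ and $\chi(Y)=1-\rk{\HH}$, and using dependence of $\{\langle v_j\rangle\}$ to rule out free faces (an edge of $Z$ traversed exactly once would, by a Nielsen move, produce a splitting $\HH=\HH'*\langle v_k\rangle$). This is exactly the translation the paper performs when deducing Theorem \ref{introthm: main} from Theorem \ref{maintheorem}, with $\deg(\sigma)=\sum_j n_j$.

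The gap is the step you yourself flag as the main obstacle, and it is not a technical loose end but the entire content of the theorem: you assert that angles can be distributed so that combinatorial Gauss--Bonnet yields no positively curved vertices or $2$-cells and that each band absorbs at least one unit of negative curvature for each of its $n_j$ passes along $c$, but you give no construction of such an assignment and no argument for the per-pass claim; ``the only way to run along $c$ at no curvature cost is to peel off along a free splitting'' is a restatement of the inequality to be proved, not a mechanism. No local weighting with these properties is known, which is why the $w$-cycles conjecture resisted naive curvature arguments. The actual proofs are global: here the stacking of $w\colon S\immerses\ambient$ (whose existence is itself a lemma, Lemma \ref{stackings}, not an immediate consequence of ``ordering the bands by overlap'') is used to filter the vertical bipartite graphs $\adj_{\subscript}$ of the graph of graphs, Lemma \ref{charc} converts the Euler characteristic deficit into a bound on the pieces $A^{\pm}(s)$, and the up-down lemma (Lemma \ref{updownlemma}) then forces \emph{two} valence-one $\hgraph$-vertices, i.e.\ two free faces over every edge of $w(S)$, whenever $\chi(\hgraph)+\deg(\sigma)-1>\chi(\und)$ --- contradicting dependence. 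You invoke the stacking but never use the order it provides, so as written the proposal does not prove the inequality; to complete it you would need either to supply a genuine curvature scheme with the claimed properties or to carry out a global argument of the stacking/up-down type.
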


Despite the fifty-nine years of work documented above, there are
simple examples that do not fall within the scope of these
theorems. For instance, consider the next example (which famously
demonstrates that stable commutator length does not coincide with
commutator length in free groups).

\begin{example}\label{eg: Non-sharpness}
  Let $\Sigma$ be a torus with one boundary component. Let
  $F=\pi_1\Sigma=\langle a,b\rangle$ and let $w=[a,b]$ correspond to
  the boundary component.  Consider the homomorphism $F\to S_3$ given
  by $a\mapsto (2 3)$ and $b\mapsto (1 2)$, so $w\mapsto (1 2 3)$, and
  let $\Sigma'\to \Sigma$ be the 3-sheeted covering map corresponding
  to the stabiliser of $1$ in $S_3$.  The boundary component unwraps
  three times in $\Sigma'$ and therefore, by computing Euler
  characteristic, $\Sigma'$ is a surface of genus two with a single
  boundary component, represented by $v\in \HH=\pi_1(\Sigma')$.

  In summary, in this example, $\rk{F}=2$, $\rk{H}=4$, $\langle
  u\rangle$ is dependent and malnormal in $\HH$, and the inclusion
  $\HH\to \FF$ sends $v\mapsto w^3$.
\end{example}

Example \ref{eg: Non-sharpness} is, of course, consistent with the
theorems of Baumslag, the first author and Duncan--Howie. However, the
first two theorems only assume that $v\mapsto w^n$ with $n>1$, and
conclude that $\rk{F}\leq 3$. Likewise, the Duncan--Howie theorem
asserts that $n\leq 3$, but places no constraint on $\rk{F}$.
Intuitively, one is lead to conjecture a common generalisation, which
imposes an upper bound on $n+\rk{F}$.

\begin{theorem}\label{introthm: main}
  Let $\HH_1,\ldots,\HH_l$ be free groups, $\{\langle
  v_{i,j}\rangle\}$ a malnormal collection of non-trivial cyclic
  subgroups of $\HH_i$ and $n_{i,j}$ positive integers.  Let $\Delta$
  be the associated graph of groups and let $f\colon
  \pio{\Delta}\to\FF$ be a surjective homomorphism to a free group
  with $f\vert_{\HH_i}$ injective for each $i$.  Then
  \[
\rk{\FF}-2+    \sum_{i,j} n_{i,j}\leq
    \sum_i(\rk{\HH_i}-1)
  \]
  if the family $\{\langle v_{i,j}\rangle\}$ is dependent for each $i$.
\end{theorem}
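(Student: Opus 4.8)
The plan is to pass from the algebraic statement about $\pio{\Delta}$ to a topological one about graphs and maps between them, and then to run an inductive argument on the complexity of a resolving tower. First I would realize $\Delta$ as a graph of spaces: each $\HH_i$ is represented by a wedge of circles (a rose) $R_i$, the central vertex group $\langle w\rangle$ by a circle, and each edge by a cylinder $S^1\times[0,1]$ attaching a loop representing $v_{i,j}$ in $R_i$ to the $n_{i,j}$-fold cover of the central circle. Call the resulting $2$-complex (or graph of graphs) $\Gamma_\Delta$; then $\pi_1(\Gamma_\Delta)=\pio{\Delta}$ and the surjection $f\colon\pio{\Delta}\to\FF$ is induced by a map to a rose $R_\FF$. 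The hypothesis that $f\vert_{\HH_i}$ is injective says the restriction to each $R_i$ is $\pi_1$-injective, i.e.\ (after folding) an immersion. The quantity $\rk{\FF}-1$ is $-\chi(R_\FF)$ and $\sum_i(\rk{\HH_i}-1)$ is $-\sum_i\chi(R_i)$, so the inequality to be proved is an Euler-characteristic count: the number of edges $\sum_{i,j}n_{i,j}$ of a certain kind, minus one, is bounded by the drop in $-\chi$.

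The main technical device I expect to use is the folding/resolution machinery developed by the first author (as in \cite{adjoiningroots}): the map $\Gamma_\Delta\to R_\FF$ factors, after a sequence of folds and subdivisions, through a \emph{tower} of immersions, and one controls how $\chi$ and the edge data change at each stage. Concretely, I would push the central circle and its attaching maps into $R_\FF$ and fold; the dependence hypothesis on $\{\langle v_{i,j}\rangle\}$ for each $i$ — which by the remark after the definition of independence is exactly the statement that no $v_{i,j}$ is expressible so as to split off a free factor containing the others — is what prevents the folding from simplifying $R_i$ too cheaply, so that each of the $n_{i,j}$ contributions is genuinely "paid for" by a decrease in Euler characteristic. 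This is where one recovers the earlier dependence theorems: the Louder 2013 theorem and the $w$-cycles theorem are the cases where one tracks only $\rk\FF$ or only $\sum n_{i,j}$ respectively, and the present statement is the sharp combined inequality. I would set up a complexity (lexicographically ordered, e.g.\ total $\sum n_{i,j}$ together with total rank) and argue that if $\Gamma_\Delta\to R_\FF$ is not already "minimal" then a fold reduces the complexity while preserving all hypotheses, so it suffices to treat the minimal case.

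In the minimal (irreducible) case, the map $\Gamma_\Delta\to R_\FF$ is close to an immersion, and I would analyze the \emph{core} of the image together with the way the $l$ roses and the $\sum_{i,j}n_{i,j}$ edge-cylinders sit inside it. A Euler-characteristic count on the pushout, using that each $R_i$ maps by an immersion (so $\chi$ is subadditive in the right direction) and that the malnormality of $\{\langle v_{i,j}\rangle\}$ forces the attaching loops to be "independent" in the image, should yield exactly $\sum_{i,j}n_{i,j}-1\leq\sum_i(\rk{\HH_i}-1)-(\rk{\FF}-1)$, with the $-1$ coming from the single central vertex (the graph of groups is a tree, so its fundamental group is a free product of the pieces amalgamated along the edges, contributing one unit of $\chi$ per edge but saving one overall). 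The hard part, as always in this circle of ideas, is the irreducible case: showing that a minimal map cannot "hide" some of the $n_{i,j}$ without a corresponding loss of rank — equivalently, ruling out the kind of efficient map suggested by Example \ref{exa: Borromean rings}. I would expect this to require a careful local analysis of folds at the central vertex (tracking which pairs of edge-cylinders can be identified under a fold, constrained by malnormality and by the injectivity of $f\vert_{\HH_i}$), and this is the step on which the whole argument turns.
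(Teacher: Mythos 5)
Your setup is the same as the paper's: realize the graph of groups as a graph of graphs mapping to a rose, translate ranks into Euler characteristics, and note that malnormality of $\{\langle v_{i,j}\rangle\}$ rules out cancelling pairs (diagrammatic irreducibility) while dependence rules out free faces. Up to the omitted (but easy) reduction to the case where $f(w)$ is indivisible, this is exactly how the paper reduces the statement to its Theorem \ref{maintheorem}. The problem is that you never prove that theorem: your plan for the ``minimal/irreducible'' case is a complexity induction on folds plus ``an Euler-characteristic count on the pushout,'' and you explicitly flag this as the step on which everything turns without supplying the idea. That is precisely the gap. The strategy you describe is essentially that of \cite{adjoiningroots}, and the paper points out (in the remark after Lemma \ref{charc}) why it does not extend: there, acylindricity forced the vertical vertex and edge graphs of the decomposition to be trees, which fails in the present generality, and malnormality alone does not make the attaching loops ``independent in the image'' --- Example \ref{exa: Borromean rings} is exactly a case where naive subadditivity counts would permit a map that the theorem must exclude. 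Fold-by-fold induction also has no obvious invariant that simultaneously controls $\sum n_{i,j}$ and the rank drop sharply enough to produce the $-1$ on the left-hand side.

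What the paper actually does is quite different in its core mechanism, and none of it appears in your proposal: it fixes the resolved graph $\und$, expresses $\chi(\hgraph)+\chi(\mathcal{C})=\chi(\und)$ for a Mayer--Vietoris chain complex $\mathcal{C}$ built from the $H_1$ of the vertical graphs, and then uses a \emph{stacking} of the indivisible relator $w$ (Lemma \ref{stackings}) to filter each vertical bipartite graph $\adj_{\subscript}$ by sublevel sets, producing auxiliary complexes $\mathcal{C}^{\pm}$ indexed over the circle $S$ whose characteristic bounds $\dim A^{\pm}(s)$ from above (Lemma \ref{charc}). The decisive combinatorial input is the up-down lemma (Lemma \ref{updownlemma}): a simple connected ordered bipartite graph has at least two ``good'' vertices. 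Arguing by contraposition, if the claimed inequality fails then no $S$-vertex of any edge graph over $w(E_S)$ can be good, so each such edge graph has two valence-one $\hgraph$-vertices, i.e.\ $\lambda$ is strongly independent --- contradicting weak dependence. Without the stacking-and-filtration argument, or some substitute for it, your outline does not yield the quantitative bound, so as it stands the proposal is incomplete at its central step.
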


We do not know if the inequality of Theorem~\ref{introthm: main} is
sharp; see Question~\ref{sharp question}.

As stated, Theorem \ref{introthm: main} does not strictly generalize
the Duncan--Howie theorem, since the map $f$ in Theorem \ref{introthm:
  main} is required to be injective on the $\HH_i$. Theorem
\ref{maintheorem} relaxes the injectivity hypothesis to a hypothesis
of `diagrammatic irreducibility', which is weak enough to encompass
the Duncan--Howie theorem; see Corollary \ref{dhcorollary} for
details.

The connection between the dependence theorem and one-relator groups
goes via an estimate on the Euler characteristic of the one-relator
pushout of a branched map; the reader is referred to Definitions
\ref{def: branched map} and \ref{def: one-relator pushout} for
the relevant terms.  A special case of the estimate can be stated as
follows, which is a direct consequence of Corollary
\ref{monotonicity}.

\begin{corollary}
  \label{cor: Intro monotonicity}
  Let $f\colon Y\immerses X$ be an immersion from a compact, connected
  two-complex $Y$ to the presentation complex $X$ of a one-relator
  group $G=F/\ncl{w}$, with $w$ not a proper power. If $Y$ has no free
  faces then
  \[
    \chi(Y)\leq\chi(\hatt Y)~,
  \]
  where $\hatt{Y}$ is the one-relator pushout of $f$.
\end{corollary}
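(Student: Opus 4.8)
The plan is to deduce the estimate from the general monotonicity statement for branched maps, Corollary~\ref{monotonicity}; the only work is to check that an immersion with no free faces lies within its scope. First I would record the structural consequence of $f$ being an immersion: since the open $2$-cell of $X$ is simply connected, every open $2$-cell of $Y$ maps homeomorphically onto it, so each $2$-cell of $Y$ is attached along a cyclically reduced closed edge-path of $Y^{(1)}$ spelling a cyclic conjugate of $w^{\pm1}$. In particular $f$ is a branched map in the sense of Definition~\ref{defn: Branched map} --- with no branching over the barycentre of the $2$-cell --- so the one-relator pushout $\hatt Y$ of Definition~\ref{def: one-relator pushout} is defined. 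Using that $w$ is not a proper power, $\hatt Y$ is obtained from $Y$ by folding the $w$-cycles in $Y^{(1)}$ onto a single copy of the $w$-labelled circle and gluing on one $2$-cell; Corollary~\ref{monotonicity} then yields $\chi(Y)\leq\chi(\hatt Y)$ once its running hypothesis is verified, and I would check that a free face of $Y$ is exactly the kind of reducible configuration that hypothesis excludes, so that `$Y$ has no free faces' supplies it.

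To see the mechanism behind the inequality, write $\hatt Y=\hat\Gamma\cup D$ with $\hat\Gamma$ the folded graph and $D$ the single $2$-cell, so $\chi(\hatt Y)=\chi(\hat\Gamma)+1$ and $\chi(Y)=\chi(Y^{(1)})+F$, where $F$ is the number of $2$-cells of $Y$; the claim is thus equivalent to the statement that the fold $Y^{(1)}\to\hat\Gamma$ raises Euler characteristic by at least $F-1$. (After discarding edges of $Y^{(1)}$ lying in no $2$-cell, which alters both sides by the same amount, we may assume every edge of $Y^{(1)}$ is covered by the $w$-cycles.) Folds never lower $\chi$, and an elementary fold raises it exactly when it is applied to two edges sharing both endpoints, so it is enough to locate $F-1$ such strict folds. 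Since $Y$ has no free faces, every edge of $Y^{(1)}$ is traversed at least twice by the $w$-cycles, which forces them to overlap enough that one can process them one at a time --- folding each successive $w$-cycle onto the previously folded bundle --- and because $w$ is not a proper power the cyclic word it spells is aperiodic, so an overlap of even a single edge propagates all the way around: each such cycle folds in entirely, and when its image closes up, the final identification is of two edges whose endpoints have already been identified, a strict fold. This produces the required $F-1$ strict folds.

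I expect this propagation-and-closing-up step to be the main obstacle: converting `$p_k$ agrees with the bundle along one edge' into `$p_k$ folds in, at the cost of at least one $\chi$-raising elementary fold' is precisely the delicate combinatorics that Corollary~\ref{monotonicity} carries out in the broader branched-map setting, with due care for multiplicities --- an edge of $\hat\Gamma$ traversed more than once by some cycle, several $w$-cycles spelling the same loop, and the amalgamation of several overlap-components into one relator circle. The remaining tasks --- matching `no free faces' to the hypothesis of Corollary~\ref{monotonicity}, and the degenerate reductions above --- are routine by comparison.
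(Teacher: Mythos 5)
Your first paragraph is exactly the paper's argument: an immersion is a branched map with every branching degree $n_e=1$, and having no free faces gives $\partial Y=\varnothing$, so the restriction $f\vert_{\partial Y}\to w(S)$ is certainly not at least two-to-one, and Corollary~\ref{monotonicity} yields $\chi(Y)+\sum(n_e-1)=\chi(Y)\leq\chi(\hatt Y)$ directly. The folding ``mechanism'' in your remaining paragraphs is not needed for (and does not by itself establish) the estimate --- as you acknowledge, the identification of the $w$-cycles with $S$ in the pushout is forced regardless of overlaps, and the bookkeeping of $\chi$-raising identifications is precisely the content of the dependence theorem behind Corollary~\ref{monotonicity} --- so the proof stands as in the paper.
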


As well as the applications to non-positive immersions mentioned
above, this estimate on Euler characteristics also gives new proofs of
Magnus' Freiheitssatz and Lyndon's asphericity theorem; see Theorem
\ref{thm: Magnus Lyndon}.

\subsection{Remarks about Theorem \ref{introthm: main} and its proof}

The proof of Theorem \ref{introthm: main} combines \emph{stackings}
(first defined in \cite{louder-wilton}) with \emph{adjunction spaces}
(the main tool of \cite{adjoiningroots}).  The definition of a
stacking was inspired by the proof of the Duncan--Howie theorem, which
in turn draws on the theory of one-relator groups developed by Magnus
and others, as well as the tower argument of Papakyriakopoulos.

The adjunction space is the natural topological representative of the
graph of groups $\Delta$ considered in the previous section. The map
$\pi_1(\Delta)\to F$ can be realised by a \emph{second}
graph-of-spaces structure on the adjunction space, and the rank of the
underlying graph of gives an upper bound for $\rk{F}$ -- see Remark
\ref{rem: pi1 surjectivity}.  In a nutshell, the idea of the proof is
now to use the stacking constructed in \cite{louder-wilton} to analyse
the homology of the adjunction space in a manner reminiscent of Morse
theory.  The stacking enables us to define fibrewise filtrations on
the adjunction space.  An analysis of these filtrations reduces the
proof of the main theorem to a combinatorial lemma -- the up-down
lemma of \S\ref{Ss: Updown lemma}.

The resulting inequality, that of Theorem \ref{introthm: main},
combines the degrees of the adjunctions with the rank of the
target. This improves on both the Duncan--Howie theorem (and its
generalisation in \cite{louder-wilton}), which only sees the degrees
of the adjunctions, and the theorems of Baumslag, Stallings and the
first author, which only see the rank of the target.  Morally, Theorem
\ref{introthm: main} can be thought of as a kind of non-abelian
rank-nullity theorem.

\subsection*{Acknowledgements}

The second author was funded by EPSRC Standard Grant EP/L026481/1. The
authors are grateful to the anonymous referees for their close reading
of the manuscript; their comments greatly improved the exposition.
The authors are also grateful to Jim Howie and Hamish Short for
pointing out an error in the the original version of Lemma \ref{lem:
  nielsenreduces}.

\section{Graphs and graphs of graphs}

\subsection{Graphs}
\label{graphs}

We start by recalling the basic set-up of graphs.

An (oriented) graph $G$ is a tuple $G=(V_G,E_G,\iota,\tau)$, where
$V_G$ and $E_G$ are sets, (called the \emph{vertices} and \emph{edges}
of $G$, respectively) and $\iota\colon E_G\to V_G$ and $\tau\colon
E_G\to V_G$ are maps (called \emph{incidence} maps). When convenient
we suppress the subscript $G$. We often use the letter $\alpha$ to
denote an incidence map, which might be either $\iota$ or $\tau$.

A \emph{morphism of graphs} is a pair of maps $f:V_G\to V_{G'}$ and
$f:E_G\to E_{G'}$, such that the natural diagrams
\begin{center}
  \begin{tikzcd}
 E_G\arrow{r}{\alpha}\arrow{d}{f} & V_G\arrow{d}{f} \\
 E_{G'} \arrow{r}{\alpha} & V_{G'} 
  \end{tikzcd}
\end{center}
commute, for $\alpha=\iota,\tau$.  A graph is \emph{simple} if its
edges are determined by their endpoints, i.e., if $\iota(e)=\iota(e')$
and $\tau(e)=\tau(e')$ then $e=e'$. Note that in this case, the map
$(\iota,\tau)$ naturally identifies $E_G$ with a subset of $V_G\times
V_G$.

If there is a partition $V_G = I_G\sqcup T_G$ such that
$\iota(E_G)\subseteq I_G$ and $\tau(E_G)\subseteq T_G$ then $G$ is
called \emph{bipartite}.  A \emph{morphism of bipartite graphs} is a
morphism of graphs that respects the bipartite structure.  Again, if
$G$ is a simple bipartite graph, then $(\iota,\tau)$ identifies $E_G$
with a subset of $I_G\times T_G$.

Given a graph $G$ the \emph{geometric realization} of $G$ is the
1-complex
\[
  \real{G}=(V_G\sqcup
  (E_G\times\left[-1,1\right]))/\{(e,-1)\sim\iota(e),(e,1)\sim\tau(e)\}~.
\]
We implicitly identify $V_G$ with its image in $\real{G}$.

The realization of a morphism of graphs $f\colon G\to G'$ is the map
\begin{equation*}
  \real{f}(x)=
  \begin{cases}
    f(x) & \mbox{ if } x\in V_G \\
    (f(e),t) & \mbox{ if } x=(e,t)\in e\times\left[-1,1\right]
  \end{cases}
\end{equation*}

The \emph{(Euler) characteristic} of a graph $G$ is defined to be
quantity
\[
\chi(G):=\vert V_G\vert-\vert E_G\vert \, .
\]
For a choice of $v_0\in V_G$, we define
$\pio{G,v_0}\define\pio{\real{G},v_0}$. As usual, we will suppress
mention of the base point $v_0$ when it will not cause confusion.  In
the usual way, a morphism of graphs $f\colon G\to G'$ induces a
homomorphism $f_*=\real{f}_*$ on fundamental groups.

The valence of a vertex $v\in V_G$ is defined to be
\[
\valence(v)\define\#\{e\in E_G\mid \iota(e)=v\}+ \#\{e\in E_G\mid \tau(e)=v\}~.
\]
If $\real{G}$ is homeomorphic to $S^1$ then we say that $G$ is a
\emph{cycle}; equivalently, $G$ is finite and connected, and every
vertex has valence two.

A morphism of graphs is an \emph{immersion} if for all $e\neq e'$ and
$\alpha\in\{\iota,\tau\}$, $\alpha(e)=\alpha(e')$ implies $f(e)\neq
f(e')$.  Note that the realization $\real{f}$ of an immersion $f$ is
locally injective, and by Stallings \cite[5.3]{stallings-folding}
induces an injective map $f_*$ on fundamental groups.

As in \cite{stallings-folding}, a finite graph is called a \emph{core
  graph} if there are no vertices of valence $1$.

\subsection{Combinatorial complexes}
\label{sec: combinatorial complexes}

We are now ready to define the class of 2-complexes that we will work
with.

\begin{definition}\label{def: Combinatorial complex}
  A \emph{combinatorial (2-dimensional) complex} $X$ is a tuple
  \[
  (G_X,S_X,w_X,o_X)
  \]
  where $G_X$ is a graph, $S_X$ is a disjoint union of cycles,
  $w_X:S_X\to G_X$ is an immersion of graphs and $o_X$ is an
  orientation on $\real{S}_X$. We emphasise that $o_X$ is not required
  to relate to the structure of $S_X$ as an oriented graph, and in
  general, it will not.
\end{definition}

As usual, we suppress subscripts when it will not cause confusion. We
will also often suppress mention of the orientation $o_X$ as well.

A \emph{morphism} of combinatorial complexes $f:X\to X'$ consists of a
map of graphs $f:G_X\to G_{X'}$ and an immersion $s:S_X\to S_{X'}$
such that the diagram
\begin{center}
  \begin{tikzcd}
 S_X\arrow{r}{s}\arrow{d}{w_X} & S_{X'}\arrow{d}{w_{X'}} \\
 G_X \arrow{r}{f} & G_{X'} 
  \end{tikzcd}
\end{center}
commutes and $\real{s}^*(o_{X'})=o_X$.  We emphasise that, to avoid
the notation becoming too burdensome, we will usually use the same
letter to denote the map of 2-complexes and the the map of 1-skeleta,
and introduce the letter $s$ to denote the accompanying map of
circles.

The \emph{realization} of a combinatorial complex $X$ is the space
\[
\real{X}\define \real{G}_X \sqcup (\real{S}_X\times [0,1])/\sim
\]
where $(\theta,0)\sim \real{w}(\theta)$ for all $\theta\in \real{S}_X$
and $(\theta_1,1)\sim(\theta_2,1)$ whenever $\theta_1$ and $\theta_2$
are in the same component of $\real{S}_X$.  Note that this definition
is functorial: a morphism $f$ of combinatorial complexes $X\to X'$
naturally induces a continuous map $\real{f}:\real{X}\to\real{X}'$.
As usual, given a choice of vertex $x_0$ in $G_X$, we define
$\pi_1(X,x_0)\define\pi_1(\real{X},x_0)$.

A \emph{free face} of a combinatorial complex $X$ is an edge $e\in
E_{G_X}$ such that $|w_{X}^{-1}(e)|=1$.  Note that if $X$ has a free
face, then $\real{X}$ can be simplified by a simple homotopy, which
collapses a 2-cell (see Section \ref{subsection: nielsen
  equivalence}).  The \emph{boundary} of $X$, $\partial X$, is the
union of its free faces. The \emph{boundary} of the realization,
$\partial\real{X}$, is the union of the closed edges corresponding to
$\partial X$.

The \emph{link} of a vertex $v$ of $G_X$ is a graph $L\equiv L(v)$
defined as follows.  The set of vertices of $L$ is
\[
V_L\define\{(x,\alpha)\in E_{G_X}\times\{\iota,\tau\}\mid \alpha(x)=v\}
\]
and the set of edges is $E_L\define w^{-1}(v)\subseteq V_{S_X}$.
Since $S_X$ is a disjoint union of cycles, for each $y\in E_L$ there
exist exactly two edges of $E_{S_X}$ incident in $S_X$ at $y$. Let
$i_y$ be the edge immediately preceding $y$ according to $o_X$ and let
$\alpha_y\in\{\iota,\tau\}$ such that $\alpha_y(i_y)=y$. Likewise, let
$t_y$ be the edge immediately following $y$ according to $o_X$ and let
$\beta_y(t_y)=y$.  Then $(w(i_y),\alpha_y)$ and $(w(t_y),\beta_y)$ are
both vertices of $V_L$, and we define the incidence maps of $L$ by
setting $\iota(y)=(w(i_y),\alpha_y)$ and $\tau(y)=(w(t_y),\beta_y)$.

A morphism of combinatorial complexes $f:X\to X'$ naturally induces
maps on links $f_v:L(v)\to L(f(v))$.

We can now define the class of morphisms that we are concerned
with. Informally, branched maps are maps that are locally
injective away from vertices and midpoints of 2-cells, and immersions
are locally injective everywhere.

\begin{definition}\label{def: branched map}
  A morphism $f:X\to X'$ of combinatorial complexes is a
  \emph{branched map} if every induced map on links $f_v$ is an
  immersion.  Furthermore, if every $f_v$ is injective and the map
  $s:S_X\to S_{X'}$ is injective on each component, we say that $f$ is
  an \emph{immersion}.  In this case, we write $f:X\immerses X'$.
\end{definition}

In combinatorial group theory, a special role is played by \emph{van
  Kampen diagrams} -- planar 2-complexes that represent relations in
the fundamental group.  These can be seen as special cases of
morphisms of combinatorial complexes.

\begin{definition}\label{def: vK diagram}
Let $X$ be a combinatorial complex.  A \emph{van Kampen diagram} over
$X$ is pair of morphisms
\[
S\to D\to X
\]
where $S$ is a cycle and $\real{D}\cup_{\real{S}} D^2$ is homeomorphic
to the 2-sphere $S^2$.  If $S\to D$ is an immersion and $D\to X$ is a
branched map then the van Kampen diagram is said to be
\emph{reduced}.
\end{definition}

\begin{remark}\label{rem: vK diagrams slightly generalized}
  This notion of van Kampen diagram is slightly more general than the
  standard one (cf. \cite[Chapter III, \S 9]{lyndon-schupp}), since we
  allow branching over the centres of the 2-cells in $D$.
\end{remark}

The next lemma provides a useful characterization of branched
maps.

\begin{lemma}\label{lem: branched maps and pullbacks}
A morphism $f$ of combinatorial complexes
\begin{center}
  \begin{tikzcd}
 S_X\arrow{r}{s}\arrow{d}{w_X} & S_{X'}\arrow{d}{w_{X'}} \\
 G_X \arrow{r}{f} & G_{X'} 
  \end{tikzcd}
\end{center}
is a branched map if and only if the map
\[
(w_X,s):E_{S_X}\to E_{G_X}\times E_{S_{X'}}
\]
is an embedding.
\end{lemma}
\begin{proof}
  The link of an edge $e$ in a combinatorial complex $X$ is the set
  $w_X^{-1}(e)$. For any vertex $v$ of $X$, the map $f_v:L(v)\to
  L(f(v))$ is an immersion if and only if it does not fold any pair of
  edges of $L(v)$. Equivalently, $f_v$ is an immersion if and only if
  $s$ restricts to an injective map on the star of every vertex of
  $L(v)$. Therefore, $f$ is a branched map if and only if it
  induces injective maps on the links of edges of $X$.  To complete
  the proof of the lemma, note that $s$ is injective on each link
  $w_X^{-1}(e)$ if and only if the map $(w_X,s)$ is injective.
\end{proof}

\subsection{Graphs of graphs and the adjunction space}

The construction below appears in various guises in the
papers~\cite{dicks-shnc,louder-mcreynolds,adjoiningroots}.

\begin{definition}
  \label{def: graph of graphs 2}
  A graph of graphs is a graph $M=(V_M,E_M,\varphi,\psi)$, where the
  vertex and edge sets $V_M$ and $E_M$ are collections of graphs. The
  elements of $V_M$ are the \emph{vertex graphs} of $M$, and the
  elements of $E_M$ are the \emph{edge graphs} of $M$. The incidence
  maps $\varphi$ and $\psi$ are considered as maps from $V_M$ to $E_M$
  as sets, but which specify morphisms of graphs. That is, given $E\in
  E_M$ there are morphisms of graphs $\varphi_E:E\to \varphi(E)\in
  V_M$ and $\psi_E:E\to\psi(E)\in V_M$.
\end{definition}

Given a graph of graphs $M$, we construct a new graph $\Gamma_M$,
which is the graph $M$ with the additional data forgotten. There is a
natural ``forgetful'' (iso)morphism of graphs $\pi:M\to \Gamma_M$. For
each $x\in E_{\Gamma_M}\sqcup V_{\Gamma_M}$, we let $M_x$ denote the
graph $\pi^{-1}(x)$. In what follows we keep this
notation. Furthermore, if $M_e\in E_M$ is an edge graph then
$\varphi(M_e)\subseteq M_v\in V_M$, for some $v=\varphi(e)\in
V_{\Gamma_M}$, and to avoid unnecessary subscripts, we denote the map
$\varphi_{M_e}$ simply by $\varphi$. Likewise for $\psi$. In this way
we think of $M$ as being a family of graphs ``indexed'' by the graph
$\Gamma_M$, which is just the graph of graphs $M$ with its additional
structures stripped away.

A morphism of graphs of graphs $f\colon M\to M'$ is a morphism of
graphs, where the map $f$ is endowed with additional structures
compatible with the graph of graphs structure, i.e., for each $x\in
\Gamma_M$, there is a morphism of graphs $f_x\colon M_x\to M_{f(x)}$
which is compatible with edge maps.  That is, for each $e\in
E_{\Gamma_M}$ the following squares commute.
\begin{center}
  \begin{tikzcd}
    M_e\arrow{r}{\varphi_e}\arrow{d}{f_e} & M_{\varphi(e)}\arrow{d}{f_{\varphi(e)}} & &     M_e\arrow{r}{\psi_e}\arrow{d}{f_e} & M_{\psi(e)}\arrow{d}{f_{\psi(e)}}\\
    M'_{f(e)}\arrow{r}{\varphi_{f(e)}} & M'_{f(\varphi(e))} & &     M'_{f(e)}\arrow{r}{\psi_{f(e)}} & M'_{f(\psi(e))}
  \end{tikzcd}
\end{center}

The set of vertex graphs $V_M$ is the collection of graphs
{$\{M_v\}_{v\in V_{\Gamma_M}}$}, and we abuse notation and denote the
disjoint union $\coprod M_v$ by $M_V$. Likewise for edge graphs. The
realization $\real{M}$, defined below, has an additional graph of
graphs structure $M'=(\{M_V\},\{M_E\},\varphi,\psi)$ with one vertex
space and one edge space, i.e., the underlying graph $\Gamma_{M'}$ has
only one vertex and one edge, with edge maps $\varphi$ and
$\psi$. Clearly $\real{M}'$ is naturally homeomorphic to
$\real{M}$. This additional graph of graphs structure will turn up
again in Subsection~\ref{subsection: computing the characteristic},
where it is used in a Mayer--Vietoris argument.

\begin{definition}\label{def: Realisation of a graph of graphs}
The \emph{realization} of a graph of graphs $M$ is the space
\[
\real{M}\define \real{M}_V\sqcup (\real{M}_E\times [-1,1])/\sim
\]
where $(x,-1)\sim \varphi(x)$ and $(x,+1)\sim\psi(x)$ for all $x\in\real{M}_E$.
\end{definition}

The notion of realization allows us to define fundamental groups and
other topological invariants of $M$ as usual. The Euler characteristic
will play a particularly important role.

\begin{definition}\label{def: Euler characteristics}
  The \emph{(Euler) characteristic} of a graph of graphs $M$ is 
  \[
  \chi(M) \define \chi(M_V)-\chi(M_E)~.
  \]
  Note that $\chi(M)$ is the Euler characteristic of $\real{M}$.
\end{definition}

\begin{remark}
  Any oriented graph $G=(V_G,E_G,\iota,\tau)$ can be regarded as a
  graph of graphs by regarding $V_G$ and $E_G$ as graphs with no
  edges, and regarding the edge maps $\iota$ and $\tau$ as morphisms
  of graphs. In this case the indexing graph has one edge and one
  vertex, with edge maps $\iota$ and $\tau$. In this case the
  forgetful map returns the graph itself. Likewise, a bipartite graph
  may be regarded as a graph of graphs indexed by a graph with two
  vertices and one edge.
\end{remark}

We are particularly interested in graphs of graphs with connected
vertex and edge spaces.  Given a graph of graphs $M$, there is a
canonical $\hatt M$ with connected vertex and edge spaces, and a
morphism $f:\hatt M\to M$ with the following property: For each $y\in
\Gamma_{M}$, the restriction
\[
f\colon\coprod_{x\in f^{-1}(y)}\hatt{M}_x\to M_y
\]
is an isomorphism of graphs.  The vertex graphs of $\hatt M$ are the
connected components of the vertex graphs of $M$, the edge graphs of
$\hatt M$ are the connected components of the edge graphs of $M$, and
the edge maps of $\hatt M$ are induced by restriction.  Furthermore,
the realisation $\real{f}:\hatt{\real{M}}\to\real{M}$ is a
homeomorphism.

Therefore, without loss of generality, we may always assume that
graphs of graphs have connected vertex and edge spaces, and we will do
so.

\begin{remark}\label{rem: pi1 surjectivity}
  If the vertex and edge graphs of $M$ are connected then the natural
  map $\pi_1(M)\to\pi_1(\Gamma_M)$ is surjective.
\end{remark}

In the terminology of Wise and his coauthors, the realization
$\real{M}$ is a $VH$-complex \cite{bridson_scr_1999}, and the maps
$\varphi$ and $\psi$ are the incidence maps of the \emph{vertical}
graph-of-graphs structure on $\real{M}$.  We now turn our attention to
the equally natural \emph{horizontal} graph-of-graphs structure on
$\real{M}$, called $W$.

Let $\iota$ and $\tau$ be the incidence maps coming from the defining
data of the graphs in $V_M$ and $E_M$. We define a new intermediate
graph of graphs structure, $N$, as follows: $N$ has one vertex graph
$V$ with vertex set $\sqcup_{v\in V_{\Gamma_M}}V_{M_v}$ and edge set
$\sqcup_{e\in E_{\Gamma_M}}V_{M_e}$, with edge maps induced by
$\varphi$ and $\psi$, and $N$ has one edge graph $E$ with vertex set
$\sqcup_{v\in V_{\Gamma_M}}E_{M_v}$ and edge set $\sqcup_{e\in
  E_{\Gamma_M}}E_{M_e}$, with edge maps again induced by $\varphi$ and
$\psi$. The collection of incidence maps $\tau$ and $\iota$ join
together to induce maps $\tau: E\to V$ and $\iota: E\to V$. We then
set $W=\hatt N$. The next lemma records the fact that this is an
alternative graph of graphs structure on $\real{M}$.

\begin{lemma}\label{lem: Vertical vs horizontal}
  The realizations $\real{W}$ and $\real{M}$ are homeomorphic.
\end{lemma}

\begin{remark}
  It is the fact that our graphs and their morphisms are oriented that
  automatically endows $\real{M}$ with the dual graph-of-graphs
  decomposition. This need not hold for graphs of unoriented graphs.
\end{remark}

\begin{remark}\label{rem: Gamma_W is a pushout}
  The underlying graph $\Gamma_W$ is  the pushout of the diagram 
  \begin{center}
    \begin{tikzcd}
      \coprod M_e\arrow[bend left = 10]{r}{\varphi}\arrow[swap, bend right = 10]{r}{\psi} & \coprod M_v  
    \end{tikzcd}
  \end{center}
  in the category of (oriented) graphs.
\end{remark}

A morphism of combinatorial complexes naturally defines a graph of
graphs -- the \emph{adjunction space}.

\begin{definition}\label{def: Adjunction space}
  Let $f:X\to X'$ be a morphism of combinatorial complexes.  The
  \emph{adjunction space} $M\equiv M(f)$ is the (bipartite) graph of
  graphs with vertex set
  \[
  V_M\define \{G_X, S_{X'}\}~,
  \]
  edge set
  \[
  E_M\define\{S_X\}
  \]
  and incidence maps given by $s:S_X\to S_X'$ and $w:S_X\to G_X$.
\end{definition}

\subsection{Resolving}\label{ss: Resolving}

We now specialize the discussion above to the case of interest for our
main theorem. We start with two combinatorial complexes and a map $h$
between them, defined by the following data.
\begin{center}
  \begin{tikzcd}
 P\arrow{r}{\sigma}\arrow{d}{\lambda} & S\arrow{d}{w} \\
 \Gamma \arrow{r}{h} & \Omega 
  \end{tikzcd}
\end{center}

Let $M$ be the adjunction space of $h$ and let $W$ be the horizontal
graph of graphs associated with $M$.  We now observe that this set-up
entails the existence of various natural maps, summarized in the
following commutative diagram.

\begin{center}
  \begin{tikzcd}
     & S\arrow[swap]{dr} \arrow{dr} \arrow[bend left=5]{drr}{w} \arrow[bend left=15]{drrr}{w}\\
     {\pb}\arrow[swap]{ur}{\sigma} \arrow{dr}{\lambda} & & {\adj} \arrow{r}{m} & {\und} \arrow{r}\arrow{r}{l} & {\ambient}\\
   & \hgraph \arrow{ur}\arrow[bend right=5]{urr} \arrow[bend right=15, swap]{urrr}{h}
  \end{tikzcd}
\end{center}

The morphism $h$ determines a map of graphs\footnote{But not of graphs
  of graphs.} ${\adj}\to {\ambient}$ that sends vertical vertex-graphs
to vertices and vertical edge-graphs to midpoints of edges.

We now \emph{resolve} this map, by factoring it through the underlying
graph $\Gamma_W$ of ${{\adj}}$.  The map ${\adj}\to {\ambient}$
factors through the natural map $m\colon {\adj}\to {\Gamma_W}$. There
are natural morphisms $S\to W$ and $\Gamma\to W$ which, when composed
with $m$, descend to morphisms $S\to\Gamma_W$ and
$\Gamma\to\Gamma_W$. By Remark \ref{rem: Gamma_W is a pushout}, the
maps $S\to\Omega$ and $\Gamma\to\Omega$ factor through a canonical map
of graphs $l:\Gamma_W\to\Omega$; the map of graphs $W\to \Omega$ then
factors as:
\[
  {\adj}\stackrel{m}{\longrightarrow} {\Gamma_W}\stackrel{l}{\longrightarrow} {\ambient}~.
\]
For the most part in what follows, this enables us to replace
${\ambient}$ by $\Gamma_W$, and for that reason, we will also denote
by $w$ the natural map $S\to {\und}$, even though, strictly speaking
$w$ is a map from $S$ to ${\ambient}$.

We denote by ${\folded}$ the graph obtained by Stallings folding the
map $l\colon {\und}\to {\ambient}$ to an immersion. Note that
$\chi({\folded})\geq\chi({\und})$.

\begin{remark}\label{rem: f factors through m}
  We can use this set-up to draw group-theoretic conclusions about
  $\pi_1(\Omega)$, since the induced homomorphism
  $\pio{\real{\adj}}\to\pio{\real{\ambient}}$ factors through the map
  $\real{m}_*\colon\pio{\real{\adj}}\to\pio{\real{\Gamma}_W}$.  Note
  that point preimages in $\real{m}$ are connected, and therefore
  $\real{m}_*$ is surjective.
\end{remark}

\begin{figure}
  \centerline{
    \includegraphics[width=.8\textwidth]{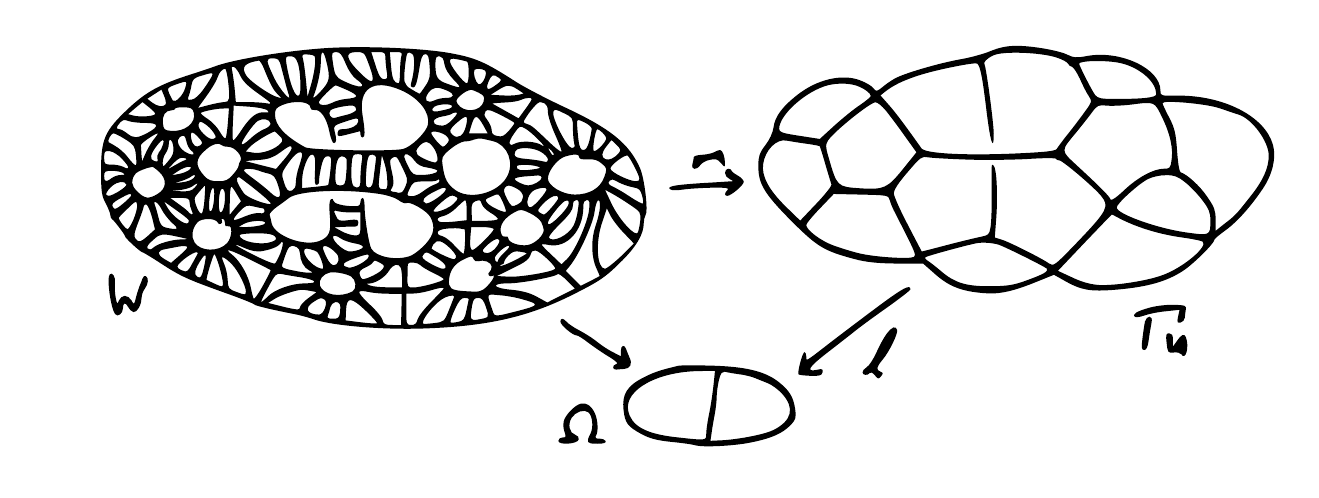}
  }
  \caption{Schematic of ${\adj}$. The realization $\real{\adj}$ is the
    graph of spaces obtained by gluing the ends of $\real{\pb}\times
    I$ to $\real{\hgraph}$ and $\real{S}$ using $\lambda$ and
    $\sigma$. We think of $\real{\hgraph}$ and $\real{S}$ as running
    through $\real{\adj}$ horizontally. The vertical graph-of-graphs
    structure on $\real{\adj}$ is cartoonishly depicted above, with
    vertex spaces $\real{\adj}_v$ and edge spaces $\real{\adj}_e$. The
    morphism of graphs $\adj\to\ambient$ factors as $l\circ m$, where
    $m$ is the projection from $\adj$ to its underlying graph $\und$.}
  \label{schematic}
\end{figure}

\subsection{The dependence theorem}

\label{subsec: dependence}

The boundary of $\adj$ consists of those edges of $\hgraph$ that are
hit by precisely one element of ${\pb}$.
  
\begin{definition}\label{defn: Boundary}
  Let ${\adj}$ be the horizontal graphs of graphs for the adjunction
  space defined above. The \emph{boundary} of ${\adj}$ is
  \[
    \partial{\adj}=
    \{e\in E_{\hgraph}\mid \vert\lambda^{-1}(e)\vert = 1\}~.
  \]
  The boundary of $\real{\adj}$ is
  \[
    \partial\real{\adj}=
      \bigcup_{{e\in\partial\adj}}
      \overline{e\times(-1,1)}
    \subseteq
    \real{\hgraph}~.
  \]
\end{definition}

By construction $\partial\adj$ is the boundary of the complex defined
by $\lambda:P\to\Gamma$. As mentioned above, when ${\adj}$ has
nonempty boundary, this complex can be simplified by a collapse.  We
call this circumstance \emph{independent} (since it implies the
group-theoretic notion of independence given in the introduction).  We
will also be interested in a strengthening of this, in which the whole
image of $S$ in ${\und}$ (and therefore in $\ambient$) is covered at
least twice by the boundary.

\begin{definition}\label{defn: Dependent}
  The map $\lambda\colon\pb\to\hgraph$ is \emph{independent} if
  $\partial\adj\neq \varnothing$; otherwise, it is called
  \emph{dependent}. The map $\lambda\colon\pb\to\hgraph$ is
  \emph{strongly independent {(over $\Omega$)}} if, for all $e\in
  w(E_S)$, $\vert\partial\adj\cap\adj_e\vert\geq 2$; otherwise, it is
  called \emph{weakly dependent {(over $\Omega$)}}.
\end{definition}

\begin{example}
  \label{ex: branched map weakly dependent}
  Let $\Omega$ be the (oriented) graph with one vertex $v$ and two
  edges $a$ and $b$, and let $X$ be the combinatorial complex
  $(\Omega, S, w, o)$, where $S$ is a cycle with three edges,
  $w:S\to\Omega$ is the immersion determined by the word $abb$, and
  $o$ is arbitrary. The realization of $X$ is the M\"obius strip, with
  one boundary edge labeled $a$. Let {$Y=(\Gamma,P,\lambda,o')$} be
  the combinatorial complex corresponding to the annular connected
  double cover of $X$ with the two lifts of the vertex identified
  {(with $o'$ pulled back from $o$)}. Then the boundary of $W$
  consists of two edges which both map to the edge $a$ in $\Omega$, so
  $\partial W$ does not (doubly) surject $w(E_S)=E_{\Omega}$, and in
  this case $\lambda$, which represents the attaching map for the two
  two-cells in $Y$, is weakly dependent. It is not, however dependent,
  since the boundary is non-empty.
\end{example}

We are interested in the setting where $h$ is a branched
map. This has the following consequences for $W$.

\begin{lemma}
  \label{lem: DI properties}
Let ${\adj}$ be the horizontal graphs of graphs for the adjunction
space associated to a branched map $h$.
  \begin{enumerate}[(i)]
  \item The graph ${\adj}_E$ is a simple bipartite graph.
  \item The incidence maps of $W$ are injective on the edges of each
    edge space $W_e$, and also on the set of vertices of $W_e$ that
    come from $S$.
  \end{enumerate}
 \end{lemma}
 
The proof is left as an easy exercise.

\begin{remark}\label{rem: Cycle valence}
  If $S$ is connected and $\sigma\colon {\pb}\to S$ is a covering map
  then, for each $s\in S_{\subscript}\subset {\adj}_{\subscript}$,
  $\valence(s)=\deg(\sigma)$.
\end{remark}

In our case, the complex defined by the map $w:S\to \Omega$ will be a
one-relator complex, meaning that $S$ is just a single cycle. We call
$w$ \emph{indivisible} if it does not factor through a proper covering
map $S\to S'$.

We can now state the dependence theorem in the form in which we prove
it.

\begin{theorem}[Dependence theorem]
  \label{maintheorem}
  Let $h$ be a branched map of combinatorial complexes as above,
  and let $W$ be the horizontal graph of spaces for the adjunction
  space.  Suppose further that $S$ is a single cycle and that $w\colon
  S\to {\ambient}$ is indivisible. If $\lambda\colon
  {\pb}\to \hgraph$ is weakly dependent then
  \[
    \chi(\hgraph)+\deg(\sigma)-1\leq\chi({\und})~.
  \]
\end{theorem}

Usually, following~\cite{stallings-folding}, subgroups of free groups
are represented by immersions of connected graphs, so for the purposes
of generalizing the theorems of Baumslag and Stallings it is safe to
restrict to immersions of connected $\hgraph\to{\ambient}$. However,
in order to strengthen the Duncan--Howie theorem we need to allow maps
that are not immersions.

\begin{example}
  \label{ex: need indivisible}
  Theorem~\ref{maintheorem} does not hold when the condition that $w$
  be indivisible is relaxed, as the following example illustrates. Let
  $\ambient$ be the graph with one vertex $v$ and one one-cell $e$,
  let $h\colon\Gamma\to\ambient$ and $w\colon S\to \ambient$ be the
  connected degree-two and degree-three covers, respectively, and let
  $P\to \ambient$ be the connected degree-six cover. Let
  $\lambda\colon P\to \Gamma$ be the degree-three cover and
  $\sigma\colon P\to S$ a degree-two cover. In this example,
  $\und\cong\ambient$, and $\lambda$ is weakly dependent, since each
  vertex and edge graph $W_e$ and $W_v$ is isomorphic to the complete
  bipartite graph $K_{2,3}$, but the theorem predicts
  \[
  0+2-1\leq 0
  \]
   which is, of course, false.
 \end{example}

If $h$ is a branched map and $\lambda$ is weakly dependent then
$\chi({\und})\leq -1$, and in this case Theorem \ref{maintheorem}
implies the inequality
\[
  \chi(\hgraph)+\deg(\sigma)\leq 0~,
\]
which is precisely Wise's $w$-cycles conjecture
\cite{helfer-wise,louder-wilton}.

Since we do not know if the inequality of Theorem \ref{maintheorem} is
sharp, we pose the question here.

\begin{question}
  \label{sharp question}
  Are there ${\adj}$ as above, with $\lambda$ dependent, such
  that
  \[
    \chi(\hgraph)+\deg(\sigma)-1=\chi({\und})
  \]
  for all $\deg(\sigma)\geq 2$ and $\chi({\und})\leq -1$? What about for
  $\lambda$ weakly dependent?
\end{question}

We next explain how Theorem \ref{maintheorem} implies
Theorem~\ref{introthm: main}.

\begin{proof}[Proof of Theorem \ref{introthm: main}]
  We may assume that $f(w)$ is not a proper power in $\FF$: if there
  are $v\in\FF$, $k\geq 1$, such that $f(w)=v^k$ then, since $f$ is
  surjective, if
  \[
  \rk{\FF}-2 +\sum_{i,j} kn_{i,j}\leq
    \sum_i(\rk{\HH_i}-1)\,,
  \]
  then certainly
  \[
  \rk{\FF}-2 +\sum_{i,j} n_{i,j}\leq
    \sum_i(\rk{\HH_i}-1)\,.
  \]

  We take ${\ambient}$ to be a rose with ${\FF}=\pio{\ambient}$ a free
  group, $\hgraph$ to be a graph immersing into ${\ambient}$, for
  which the components have fundamental groups $H_i$, and
  $\lambda\colon {\pb}\to \hgraph$ an immersion of a disjoint union of
  cycles into $\hgraph$ that represent the family $\{v_{i,j}\}$. Since
  each $f(v_{i,j})$ is conjugate into $\langle w\rangle$, these factor
  through a common cycle $w\colon S\to {\ambient}$ which induces the
  maps $\sigma\colon {\pb}\to S$ and $\lambda\colon {\pb}\to \hgraph$.
  We may therefore construct the adjunction space $M$ and its
  associated horizontal graph of graphs ${\adj}$.
  
  By definition, $\pio{\Delta}=\pi_1(M)$, which is in turn canonically
  isomorphic to $\pio{W}$ by Lemma \ref{lem: Vertical vs horizontal}.
  The map $\pio{W}\to \FF$ is surjective by Remark \ref{rem: pi1
    surjectivity} and factors through the surjection $m_*\colon
  \pio{\adj}\to \pio{\und}$ so
  $\chi(\FF)=\chi({\ambient})\geq\chi({\und})$.
  
  Because $\{\langle v_{i,j}\rangle\}$ is dependent, it follows that
  the map $\lambda\colon {\pb}\to \hgraph$ is dependent, in particular
  weakly dependent. Since $\{\langle v_{i,j}\rangle \}$ is malnormal,
  the natural map $P\to \Gamma\times_\Omega S$ is an embedding, so
  $(\Gamma,P)\to(\Omega,S)$ is a branched map of 2-complexes, by
  Lemma \ref{lem: branched maps and pullbacks}.  The result now
  follows from Theorem \ref{maintheorem}, after noting that
  \[
    \deg(\sigma) = \sum_{i,j} n_{i,j}\,,
  \] 
  that $\chi({\ambient})=1-\rk{\FF}$, and that
  $\chi(\hgraph)=\sum_i(1-\rk{\HH_i})$.
\end{proof}

\section{One-relator pushouts}

\label{subsec: one-relator pushouts}

We consider a branched map $f$ from a combinatorial complex $Y$
to a \emph{one-relator} complex $X$ -- that is, $X$ is a combinatorial
complex with a single 2-cell.  Suppose $w\colon {S}\to {\ambient}$ is
the attaching map defining $X$, and the map $\lambda\colon {\pb}\to
{\hgraph}$ defines $Y$. In this section we will see that the
realization of the pushout $\und$ of $\hgraph$ and $S$ along $\pb$ is
the one-skeleton of a ``best'' one-relator complex $\hatt Y$ that the
map $Y\to X$ factors through. The dependence theorem implies that when
$Y$ cannot be simplified in an obvious way, i.e. when $Y$ doesn't have
any free faces, then $\chi(Y)\leq\chi(\hatt Y)$.

The components $P_i$ of $P$ are the boundaries of the 2-cells of $Y$.
The \emph{degree of branching} of $P_i$ under $f$ is denoted by $n_i$,
and is the degree of the covering map $\sigma|_{P_i}:P_i\to S$.
Clearly
\begin{equation}
  \sum_i(n_i-1)=\deg(\sigma)-\#\{e\mid e\mbox{ is a two-cell in }Y\}~.\label{cells}
\end{equation}

\begin{definition}[One-relator pushout]
  \label{def: one-relator pushout}
   The one-relator complex defined by the map $w:S\to\Gamma_W$ is
   denoted by $\hatt Y$, and is called the \emph{one-relator pushout}
   of $Y$.  The map $l$ extends to a branched map $\hatt{Y}$ to
   $X$ (also denoted by $l$).  By Remark \ref{rem: Gamma_W is a
     pushout}, $\hatt{Y}$ has the following universal property:
   \begin{center}
     \begin{tikzcd}
       Y\arrow{rr}\arrow{dr}& & Z\arrow{r} & X\\
       & \hatt Y\arrow[dotted]{ur}{\exists!}\arrow[bend right=15]{urr}{l} & &
     \end{tikzcd}
   \end{center}
   whenever $Z\to X$ is a morphism of degree one.
 
   Similarly, the \emph{immersed one-relator pushout} $\hatt{Y}^I$ is
   the complex defined by the natural map $S\to\Gamma^I_W$.  It enjoys
   a similar universal property for immersions $Z\to X$ of degree one.
 \end{definition}

In the context of one-relator complexes, the dependence theorem gives
a relation between the Euler characteristics of $Y$ and $\hatt Y$.

\begin{corollary}[One-relator pushout inequality]
  \label{monotonicity}
  Let $f\colon Y\to X$ be a branched map from a compact
  combinatorial complex $Y$ to a one-relator complex $X$ defined by
  $w:S\to\Omega$, with $w$ indivisible. If the restriction
  $f\vert_{\partial Y}\colon\partial Y\to w(E_S)$ is not at least
  two-to-one then
  \[
    \chi(Y)+\sum_i(n_i-1)\leq\chi(\hatt Y)\leq \chi(\hatt Y^I)~.
  \]
  In particular, if $Y$ has no free faces and $f$ is an immersion then
  $\chi(Y)\leq\chi(\hatt Y)$.
\end{corollary}

\begin{proof}
  By~(\ref{cells}),
  \[
    \chi(Y)+\sum_i(n_i-1)=\chi(\hgraph)+\deg(\sigma)~,
  \]
  so if $\chi(Y)+\sum_i(n_i-1)>\chi(\hatt Y)$ then
  $\chi(\hgraph)+\deg(\sigma)>\chi({\und})+1$ and by the dependence
  theorem for each edge $e$ of of $\und$, $\vert\partial {\adj}\cap
  {\adj}_e\vert\geq 2$. The map $\partial\adj\to w(E_S)$ is therefore
  at least two-to one, and since $\partial Y=\partial\real{\adj}$, so
  is $f\vert_{\partial Y}$.  This proves the first inequality.
  
The second inequality is clear since the one-skeleton ${\folded}$ of
$\hatt Y^I$ is obtained from the one-skeleton $\und$ of $\hatt Y$ by
folding.
\end{proof}

\section{Proof of the dependence theorem}

\subsection{Stackings}

As well as the adjunction space, the second tool that we will use is
the notion of a \emph{stacking} from \cite{louder-wilton}.  In that
paper, a stacking of a map $\real{w}\colon\real{S}\to\real{\ambient}$
was defined to be a lift of $w$ to an embedding into
$\real{\ambient}\times\mathbb{R}$ (where $\mathbb{R}$ denotes the real
numbers). Here, we use an equivalent, combinatorial, version of the
definition. Given an injection of sets $\alpha\colon C\to D$ and a
total order $\leq$ on $D$, we let $\alpha^*(\leq)$ denote the pullback
order on $C$.

  \begin{definition}[Stacking]
  \label{def:stacking}
  Let $w\colon S\immerses {\ambient}$ be an immersion of graphs. A
  \emph{stacking} of $w$ is a collection of orders $\leq_x$ on
  $w^{-1}(x)$ for $x\in w(S)$, such that
  $\alpha^*(\leq_{\alpha(e)})=\leq_e$ for each $e\in w(E_S)$ and
  $\alpha=\iota$ or $\alpha=\tau$.
\end{definition}

\begin{figure}[ht]
  \centering
    \includegraphics[width=.6\textwidth]{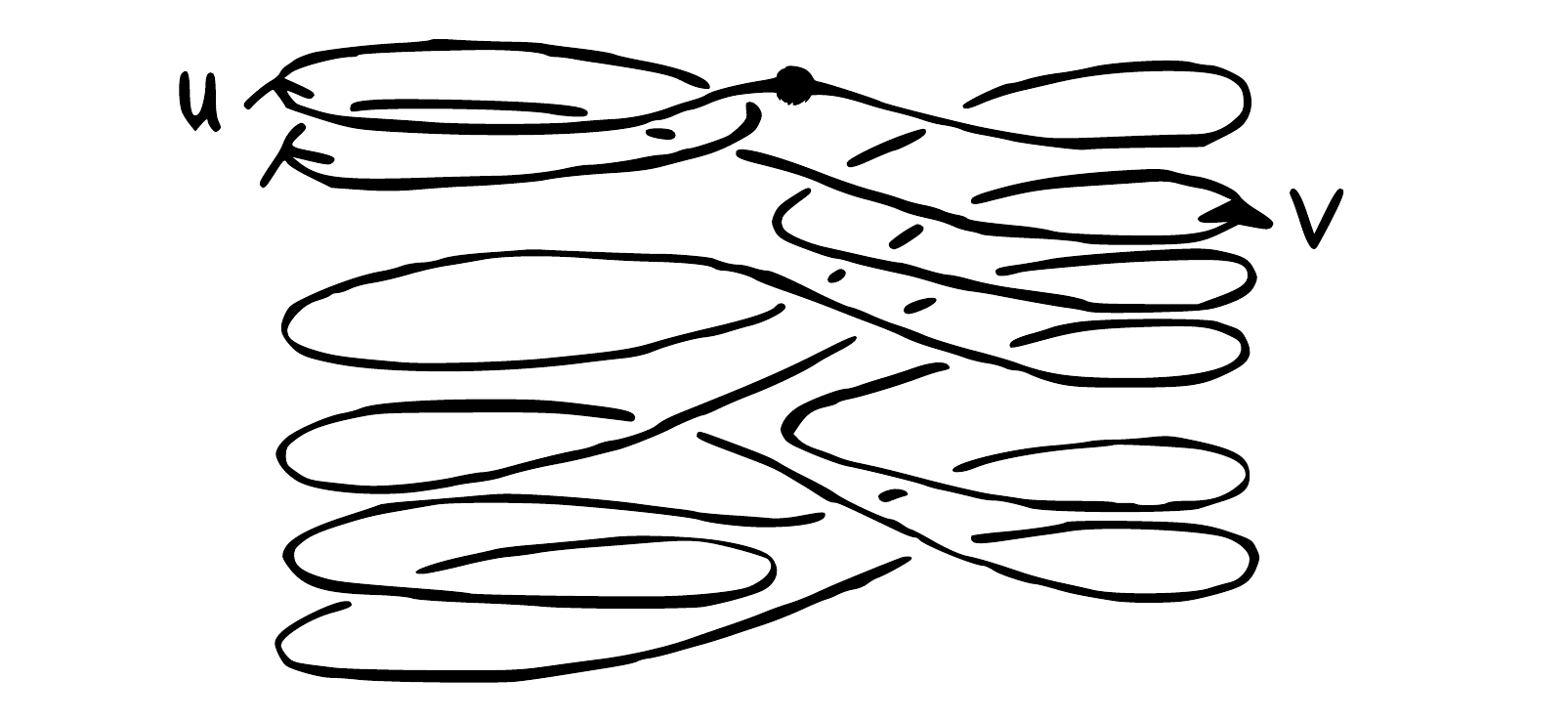}
  \caption{A stacking gives an inclusion $\widetilde{\real{w}}\colon
    \real{S}\into \real{\ambient}\times\mathbb{R}$ and
    vice-versa. This is a picture of a stacking of (the realization
    of) the word $w=uuvuvvUUVUVV$ in the rose with two petals. This
    word can be written as a commutator in two inequivalent ways
    (see~\cite{bestvina-feighn-counting}).}
  \label{stackingfigure}
\end{figure}

\begin{lemma}[{Loo-roll lemma~\cite[Lemma~17]{louder-wilton}}]
  \label{stackings}
  Any indivisible immersion $w\colon S\immerses {\ambient}$ from a
  cycle to a graph has a stacking.
\end{lemma}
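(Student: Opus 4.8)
The plan is to build a stacking of $w$ directly, by totally ordering each fibre $w^{-1}(x)$ according to the ``future itinerary'' of the strands passing over $x$. Equivalently, thinking of a stacking as a lift $\real{S}\into\real{\ambient}\times\RR$ of $\real{w}$ as in Figure~\ref{stackingfigure}, I will assign a real-valued height to each strand so that two strands lying over a common point of $\real{\ambient}$ never share a height; the height will be read off from how the strands eventually diverge from one another.

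First I would fix an orientation of the circle $S$ and lift $\real{w}$ through the universal cover $\RR\to\real{S}$ to a proper, locally injective path $\tilde{w}\colon\RR\to\real{\ambient}$ invariant under the deck translation $t\mapsto t+L$, where $L$ is the length of $w$. This is the only point at which the hypothesis enters: the closed subgroup $\{c\in\RR\mid\tilde{w}(t+c)=\tilde{w}(t)\text{ for all }t\}$ of periods of $\tilde{w}$ equals $L\mathbb{Z}$ precisely because $w$ is indivisible. Consequently, given distinct $p,q$ in a fibre $w^{-1}(x)$, with lifts $\tilde{p}\neq\tilde{q}$ chosen in $[0,L)$ (so $\tilde{w}(\tilde{p})=\tilde{w}(\tilde{q})=x$ and $0<|\tilde{p}-\tilde{q}|<L$), the forward rays $s\mapsto\tilde{w}(\tilde{p}+s)$ and $s\mapsto\tilde{w}(\tilde{q}+s)$, $s\geq 0$, cannot agree for all $s$; otherwise $\tilde{p}-\tilde{q}$ would be a period outside $L\mathbb{Z}$. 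Hence there is a well-defined first time $s^{*}\geq 0$ at which the two rays part; since a locally injective path has no choice of how to continue through the interior of an edge, at time $s^{*}$ both rays sit at a common vertex $z$ of $\real{\ambient}$ and leave $z$ along two distinct edges, $a$ for the $\tilde{p}$-ray and $b$ for the $\tilde{q}$-ray. Fixing once and for all an arbitrary total order on the edges incident to each vertex of $\ambient$, I declare $p<_{x}q$ or $q<_{x}p$ according as $a<b$ or $b<a$ in the chosen order at $z$.

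It then remains to check two things. First, each $\leq_{x}$ is a total order on $w^{-1}(x)$: antisymmetry and trichotomy are immediate, and transitivity reduces, via the observation that among three locally injective rays issuing from one point the minimum of the three pairwise parting times is attained at least twice (so the three rays branch in a tree-like fashion), to transitivity of the chosen order at the single vertex where the first split occurs. Second, the family $\{\leq_{x}\}$ satisfies the compatibility of Definition~\ref{def:stacking}: for $\tilde{e}_{1},\tilde{e}_{2}$ in a fibre $w^{-1}(e)$, the forward rays out of $\iota(\tilde{e}_{1}),\iota(\tilde{e}_{2})$, out of their midpoints, and out of $\tau(\tilde{e}_{1}),\tau(\tilde{e}_{2})$ pairwise differ only by a common initial segment (all of $e$, or the second half of $e$), and prepending a common initial segment to both rays does not change which of the two is declared larger; hence $\leq_{\iota(e)}$, $\leq_{e}$ and $\leq_{\tau(e)}$ all induce one and the same order on $w^{-1}(e)$.

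The hard part will be the tree-like branching statement underpinning transitivity, together with the routine but fiddly verification that two forward rays can part only at a vertex of $\real{\ambient}$; once these are in hand the rest is bookkeeping with itinerary segments. I would also remark that indivisibility is genuinely necessary and not merely a convenience: for $w=u^{n}$ with $n\geq 2$, the strand based at $\tilde{p}$ and the strand based at $\tilde{p}+L/n$ have literally the same future, so no total order on that fibre is possible at all.
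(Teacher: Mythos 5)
Your vertex orders are fine as far as they go: indivisibility does force the forward rays of two distinct strands over a point to diverge (an eventual period propagates, by $L$-periodicity, to a genuine period of $\tilde w$), the first divergence occurs at a vertex, and the first-difference comparison is a total order by the tree-like branching observation (which, incidentally, is the easy step, not the hard one). The genuine gap is in the step you wave through: the compatibility $\alpha^*(\leq_{\alpha(e)})=\leq_e$ of Definition~\ref{def:stacking}. Your justification --- that the forward rays based at $\iota(\tilde e_1),\iota(\tilde e_2)$ and at $\tau(\tilde e_1),\tau(\tilde e_2)$ differ only by a common initial segment of $e$ --- is true only when the orientation of the circle traverses both strands coherently with the directed edge $e$. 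Whenever the cyclic word contains a letter together with its inverse (as in the word of Figure~\ref{stackingfigure}), the edge fibre over that letter contains strands traversed in opposite senses; for such a strand $\tilde e_2$ the forward ray from $\iota(\tilde e_2)$ does not cross $e$ at all, while the forward ray from $\tau(\tilde e_2)$ begins by crossing $e$ backwards, so the four rays do not differ by a common initial segment and the orders pulled back from the two endpoints of $e$ need not agree.

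Concretely, let $\ambient$ be the rose on $\{x,y\}$ and $w=xyx^{-1}y^{-1}$, with $P_1,P_2,P_3,P_4$ the vertices of $S$ between consecutive letters. The forward rays from $P_1,P_2,P_3,P_4$ begin with the four distinct directions $x,y,x^{-1},y^{-1}$ at the vertex of $\ambient$, so all comparisons are decided at time $0$ by your chosen local order; choosing (arbitrarily, as your construction permits) $x<x^{-1}<y<y^{-1}$ gives $P_1<P_3<P_2<P_4$. But the two strands over the edge $x$ have initial vertices $P_1,P_4$ and terminal vertices $P_2,P_3$, so a stacking requires $P_1<P_4$ if and only if $P_2<P_3$, which fails. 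Thus your recipe with an arbitrary choice of local orders does not produce a stacking, and you give no argument that a good choice always exists (in this example some do, but that needs proof). A correct argument must compare two strands over an edge after normalizing both to cross that edge in the same direction, i.e.\ it must use backward as well as forward itineraries, and the real work is then the consistency of the induced vertex orders across the several edges meeting at a vertex; this two-sidedness is precisely what the proof of \cite[Lemma~17]{louder-wilton}, to which the present paper defers, takes care of. (Your closing remark is also slightly off: when $w=u^n$ total orders on a fibre certainly exist --- what fails is your comparison rule, and the existence of a compatible family.)
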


For the rest of the paper we will write realizations in normal rather
than boldface font.

\subsection{Computing the characteristic of ${\adj}$}

\label{subsection: computing the characteristic}

In this subsection, we observe that Theorem \ref{maintheorem} can be
proved by estimating the Euler characteristic of a certain chain
complex $\mathcal{C}$ naturally associated to any graph of graphs
$W$. All coefficients are in a fixed but arbitrary field.

The chain complex $\mathcal{C}$ is  
 \[
  0\to H_1(W_E)=\bigoplus_{e\in E_{\Gamma_W}}H_1(W_e)\stackrel{\partial}{\rightarrow} H_1(W_V)=\bigoplus_{v\in V_{\Gamma_W}}H_1(W_v)\to 0\,,
\]
with boundary map $\partial=\tau_{\#}-\iota_{\#}$. As usual, the Euler
characteristic of a chain complex is the alternating sum of the
dimensions of its terms, so
\[
\chi(\mathcal{C})=b_1(W_V)-b_1(W_E)\,.
\]

\begin{lemma}\label{lem: Chain complex for graph of graphs}
  Let $W$ be any graph of graphs with connected vertex and edge
  graphs, and with underlying graph $\Gamma_W$. Then
  \[
  \chi({\adj})+\chi(\mathcal{C})=\chi({\und})\,.
  \]
\end{lemma}
\begin{proof}
  By definition,
  \begin{eqnarray*}
    \chi(W)&=&\chi(W_V)-\chi(W_E)\\
    &=&b_0(W_V)-b_1(W_V)-b_0(W_E)+b_1(W_E)\\
    &=&-(b_1(W_V)-b_1(W_E))+(b_0(W_V)-b_0(W_E))\,.
  \end{eqnarray*}
  Since the vertex and edge spaces are connected,
  $\chi(\Gamma_W)=b_0(W_E)-b_0(W_V)$, and the result follows.
\end{proof}

When $W$ is the adjunction space associated to a branched map,
we obtain that estimating $\chi(\mathcal{C})$ suffices to prove
Theorem \ref{maintheorem}.

\begin{lemma}\label{lem: Chain complex for adjunction space}
  Let $h$ be a branched map of combinatorial complexes as above,
  and let $W$ be the horizontal graph of spaces for the adjunction
  space. Then
  \[
  \chi(\Gamma)+\chi(\mathcal{C})=\chi(\und)\,.
  \]
\end{lemma}
\begin{proof}
This is immediate from the previous lemma, because
\[
\chi(W)=\chi(\Gamma)+\chi(S)-\chi(P)\,,
\]
but $S$ and $P$ are disjoint unions of circles, and so
$\chi(S)=\chi(P)=0$.
\end{proof}

\subsection{Fiberwise filtering ${\adj}$}

\label{fiberwisefiltering}

Let ${\adj}$ be the horizontal graph-of-graphs decomposition for the
adjunction space of a branched map, and consider the chain
complex $\mathcal{C}$ indexed by the graph ${\und}$. In this section
we use stackings to replace $\mathcal{C}$ by a pair of chain complexes
$\mathcal{C}^{\pm}$ indexed by $S$ and which have easily computable
characteristic.

Let
${\adj}_{\subscript}=(S_{\subscript}\sqcup\hgraph_{\subscript},{\pb}_{\subscript},\lambda,\sigma)$
be a (bipartite) vertex or edge graph of ${\adj}$, where
$S_{\subscript}={\adj}_{\subscript}\cap S$,
$\hgraph_{\subscript}={\adj}_{\subscript}\cap \hgraph$,
${\pb}_{\subscript}={\adj}_{\subscript}\cap {\pb}.$ For each vertex
$s\in S_{\subscript}$, let $\pb_s=\sigma^{-1}(s)$.

Suppose that $w\colon S\to {\ambient}$ has a stacking, which we pull
back to a stacking of $w\colon S\to {\und}$. For $s\in S_{\subscript}$
define
\[
  {\adj}_{\subscript}^+(s)=\hgraph_{\subscript}\cup\{t\mid t\leq_{\subscript} s\}\cup\{p\mid\sigma(p)\leq_{\subscript} s\}
\]
and
\[
  {\adj}_{\subscript}^-(s)=\hgraph_{\subscript}\cup\{t\mid s\leq_{\subscript} t\}\cup\{p\mid s\leq_{\subscript} \sigma(p) \}~.
\]

Let $s+1$ be the successor of $s$ and $s-1$ be the predecessor of $s$,
when defined, and interpret ${\adj}_{\subscript}^+(s-1)$ as
$\hgraph_{\subscript}$ if $s$ is minimal and
${\adj}_{\subscript}^-(s+1)$ as $\hgraph_{\subscript}$ if $s$ is
maximal. The order $\leq_{\subscript}$ gives two filtrations of
${\adj}_{\subscript}$ by the sublevel sets
${\adj}_{\subscript}^{\pm}(s)$.
\begin{align}
    \hgraph_{\subscript}\subsetneq\dotsb\subsetneq {\adj}^{+}_{\subscript}(s-1)\subsetneq {\adj}^{+}_{\subscript}(s)\subsetneq
    {\adj}^{+}_{\subscript}(s+1)\subsetneq\dotsb\subsetneq {\adj}_{\subscript} \label{upfiltration}
\end{align}
and
\begin{align}
\hgraph_{\subscript}\subsetneq \dotsb \subsetneq {\adj}^{-}_{\subscript}(s+1)\subsetneq {\adj}^{-}_{\subscript}(s)\subsetneq
    {\adj}^{-}_{\subscript}(s-1)\subsetneq\dotsb\subsetneq {\adj}_{\subscript}~. \label{downfiltration}
\end{align}

For $s\in S_{\subscript}$, define
\[
A^{\pm}(s)=H_1({\adj}_{\subscript}^{\pm}(s))/H_1({\adj}_{\subscript}^{\pm}(s\mp 1))~.
\]
The quotient group $A^{\pm}(s)$ represents the additional first
homology gained when going from ${\adj}_{\subscript}^{\pm}(s\mp 1)$ to
${\adj}_{\subscript}^{\pm}(s)$. See Figure~\ref{filtration}. Summing
over $s\in S_{\subscript}$, we have
\begin{align}
  H_1({\adj}_{\subscript})\cong\bigoplus_{s\in S_{\subscript}}A^{\pm}(s)~.\label{filterdecomposition}
\end{align}
The attaching map $\alpha_e\colon {\adj}_e\to {\adj}_{\alpha(e)}$ is
injective on $S$--vertices and respects the orders $\leq_*$, so
$\alpha^*(\leq_{\alpha(e)})=\leq_e$, and there are therefore
restrictions
\[
  {\adj}_e^{\pm}(s)\to {\adj}_{\alpha(e)}^{\pm}(\alpha(s))
\]
such that
\[
\alpha_e({\adj}_e^{\pm}(s\mp 1))\subseteq {\adj}_{\alpha(e)}^{\pm}(\alpha(s\mp
1))\subseteq {\adj}_{\alpha(e)}^{\pm}(\alpha(s)\mp 1)~.
\]
Because $h$ is a branched map, by Lemma~\ref{lem: branched
  maps and pullbacks}, each $\alpha_e\colon {\pb}_e\to
{\pb}_{\alpha(e)}$ is injective, so $\alpha_s\colon {\pb}_s\to
{\pb}_{\alpha(s)}$ is as well, so there are induced injections
\begin{align}
  \alpha^{\pm}_{s,\#}\colon A^{\pm}(s)\into A^{\pm}(\alpha(s))~.\label{boundaryonelevel}
\end{align}
Again, summing over $s\in S_{\subscript}$, there are maps
\begin{align}
\alpha^{\pm}_{e,\#}=\bigoplus_{s\in S_e}\alpha^{\pm}_{s,\#}\colon\bigoplus_{s\in S_e}A^{\pm}(s)\into\bigoplus_{s\in S_{\alpha(e)}}A^{\pm}(s)~. \label{bipartiteinclusion}
\end{align}

\begin{figure}[ht]
  \centerline{
    \includegraphics[width=.6\textwidth]{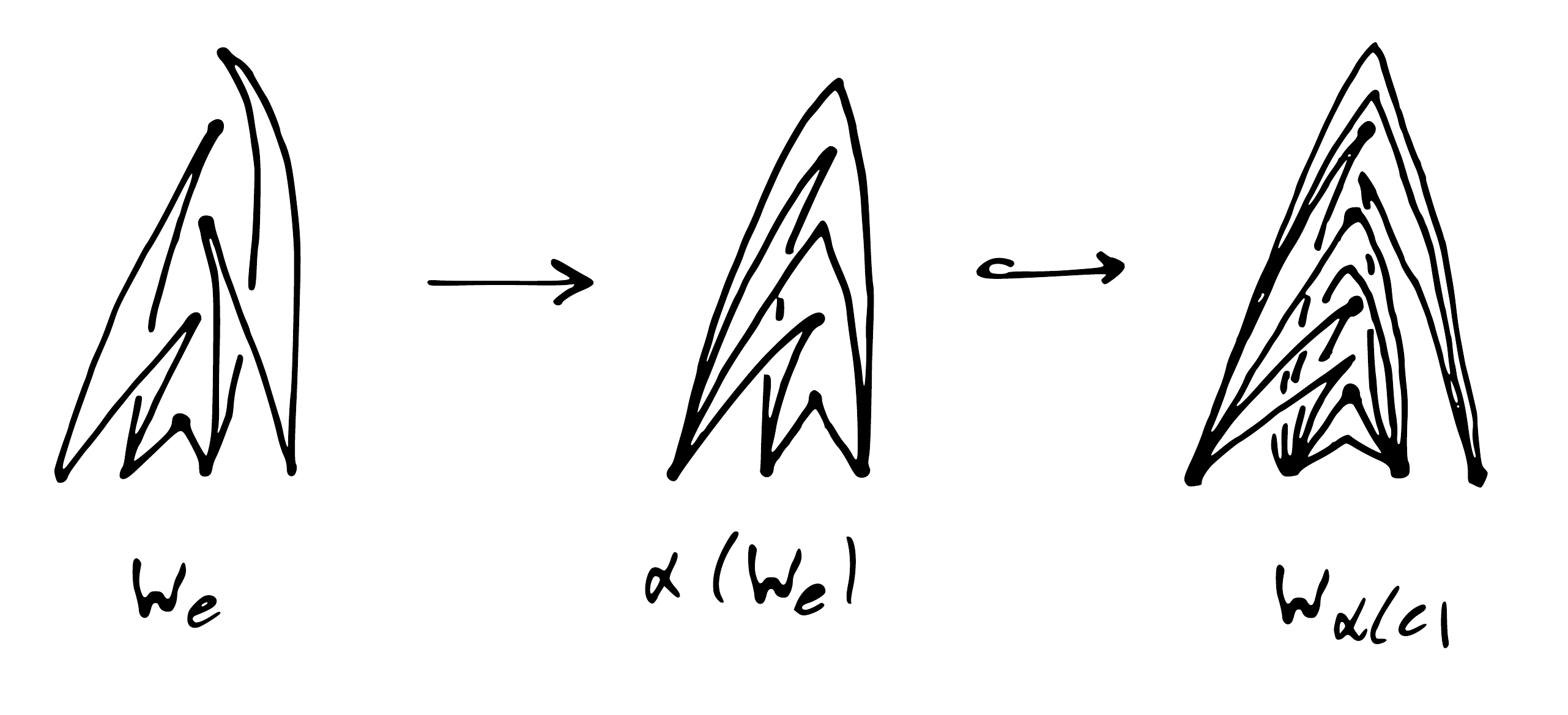}
  }
  \caption{The map $\alpha\colon {\adj}_e\to {\adj}_{\alpha(e)}$ is
    injective on ${\pb}_e$ and induces an injection $A^{\pm}(s)\into
    A^{\pm}(\alpha(s)).$ In this example two vertices of $\hgraph_e$
    are identified in $\hgraph_{\alpha(e)}$. The map $\alpha$ respects
    the sublevelset filtrations (\ref{upfiltration}) and
    (\ref{downfiltration}). Here we have drawn $S_{\subscript}$ as
    sitting ``above'' the $\hgraph_{\subscript}$ so this picture
    should be thought of as illustrating the
    filtration~(\ref{upfiltration}).}
  \label{inclusion}
\end{figure}

We now define a pair of auxiliary chain complexes $\mathcal{C}^{\pm}$
by replacing each $H_1({\adj}_{\subscript})$ in $\mathcal{C}$ using
the isomorphism (\ref{filterdecomposition}), using the sum of the maps
from~(\ref{bipartiteinclusion}) as the boundary map, that is
\[
\partial^{\pm}\define\bigoplus_{e\in E_{\und}}\tau^{\pm}_{e,\#}-\iota^{\pm}_{e,\#}~.
\]
And so
\begin{align}
  \mathcal{C}^{\pm}=\left(0\to\bigoplus_{e\in E_{{\und}}}\bigoplus_{s\in S_e}A^{\pm}(s)\stackrel{\partial^{\pm}}{\longrightarrow}\bigoplus_{v\in V_{{\und}}}\bigoplus_{s\in S_v}A^{\pm}(s)\to 0\right)~.\label{fcomplex}
\end{align}
By~(\ref{filterdecomposition}),
$\chi(\mathcal{C}^{\pm})=\chi(\mathcal{C})$. Since
\[
  V_S=\bigsqcup_{v\in V_{{\und}}}S_v\mbox{ and } E_S=\bigsqcup_{e\in E_{{\und}}}S_e~,
\]
after reindexing,~(\ref{fcomplex}) becomes
\[
  \mathcal{C}^{\pm}=\left(0\to \bigoplus_{e\in E_S}A^{\pm}(e) \to \bigoplus_{v\in V_S}A^{\pm}(v)\to 0\right)~,
\]
with boundary maps coming from~(\ref{boundaryonelevel}).

These auxiliary chain complexes enable us to relate
$\chi(\mathcal{C})$ to the vector spaces $A^{\pm}(s)$ that come from
the filtrations of the ${\adj}_{\subscript}$.

\begin{lemma}
  \label{charc}
  Suppose $S$ is a cycle. Then
  \[
  \max\{\dim(A^{\pm}(s))\mid s\in S\}\leq \chi(\mathcal{C})~.
  \]
\end{lemma}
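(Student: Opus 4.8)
The plan is to use the chain complex $\mathcal{C}^{\pm}$ and exploit the fact that, since $S$ is a circle, both the vertex set $V_S$ and the edge set $E_S$ have the same cardinality, say $N=\deg(\sigma)$ (or rather, the number of vertices equals the number of edges of the circle $S$). So $\mathcal{C}^{\pm}$ is a two-term complex $0\to\bigoplus_{e\in E_S}A^{\pm}(e)\to\bigoplus_{v\in V_S}A^{\pm}(v)\to 0$ in which the source and target are indexed by sets of the same size, and the boundary maps $A^{\pm}(e)\to A^{\pm}(\alpha(e))$ are all \emph{injective} by~\eqref{boundaryonelevel}. The characteristic is $\chi(\mathcal{C}^{\pm})=\sum_{v\in V_S}\dim A^{\pm}(v)-\sum_{e\in E_S}\dim A^{\pm}(e)$, and by~\eqref{filterdecomposition} this equals $\chi(\mathcal{C})$. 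The key point is to bound this alternating sum below by $\max_s\dim A^{\pm}(s)$.

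First I would set up the combinatorics of the circle: label the vertices $v_1,\dots,v_N$ cyclically, with $e_i$ the edge joining $v_i$ to $v_{i+1}$ (indices mod $N$). Each edge $e_i$ has two attaching maps $\iota,\tau$ into its endpoints $v_i,v_{i+1}$, giving two injections $A^{\pm}(e_i)\hookrightarrow A^{\pm}(v_i)$ and $A^{\pm}(e_i)\hookrightarrow A^{\pm}(v_{i+1})$. Pick an index $s=v_{i_0}$ realizing the maximum $M=\dim A^{\pm}(v_{i_0})$. Since each $A^{\pm}(e_i)\hookrightarrow A^{\pm}(v_{i+1})$ is injective, $\dim A^{\pm}(e_i)\le\dim A^{\pm}(v_{i+1})$; similarly $\dim A^{\pm}(e_i)\le\dim A^{\pm}(v_i)$. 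Thus $\dim A^{\pm}(e_i)\le\min(\dim A^{\pm}(v_i),\dim A^{\pm}(v_{i+1}))$. Now $\chi(\mathcal{C})=\sum_i(\dim A^{\pm}(v_i)-\dim A^{\pm}(e_i))$. Write this as a sum over edges: $\chi(\mathcal{C})=\sum_i\bigl(\dim A^{\pm}(v_i)-\dim A^{\pm}(e_i)\bigr)$, and since $\dim A^{\pm}(e_i)\le\dim A^{\pm}(v_i)$ each summand is $\ge 0$. That only gives $\chi(\mathcal{C})\ge 0$, which is too weak. To extract the maximum $M$, I would instead telescope: reindex so that the sum becomes $\sum_i\bigl(\dim A^{\pm}(v_i)-\dim A^{\pm}(e_{i-1})\bigr)$ plus a correction, or more cleanly, note that going around the cycle, $\dim A^{\pm}(v_{i_0})-\dim A^{\pm}(e_{i_0-1})\ge\dim A^{\pm}(v_{i_0})-\dim A^{\pm}(v_{i_0-1})$ telescopes. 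The honest approach: $\chi(\mathcal{C})=\sum_{i=1}^N\dim A^{\pm}(v_i)-\sum_{i=1}^N\dim A^{\pm}(e_i)$. Bound the edge sum using $\dim A^{\pm}(e_i)\le\dim A^{\pm}(v_i)$ for $N-1$ of the edges and $\dim A^{\pm}(e_{i_0})\le\dim A^{\pm}(v_{i_0+1})$ for the remaining one (or drop that term entirely if we can): if $\dim A^{\pm}(e_{i_0-1})\le\dim A^{\pm}(v_{i_0-1})$ is replaced by the trivial bound using the \emph{other} endpoint, we cancel all $\dim A^{\pm}(v_i)$ for $i\ne i_0$ and are left with $\chi(\mathcal{C})\ge\dim A^{\pm}(v_{i_0})-\dim A^{\pm}(e_{i_0})\ge 0$ — still not enough.

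The fix, which I expect is the actual argument, is that $A^{\pm}(v)=0$ for \emph{some} vertex $v$, or more precisely that the filtration structure forces one of the $A^{\pm}$ to vanish: recall that ${\adj}_{\subscript}^{+}(s-1)$ is interpreted as $\hgraph_{\subscript}$ when $s$ is minimal — but there is no minimal element on a circle. However, the stacking orders $\leq_x$ live on the fibers $w^{-1}(x)$ over points $x$ of $w(S)\subseteq{\und}$, and these are genuine total orders on finite sets, hence \emph{do} have minima. So for the fiber over the appropriate point, there is a minimal $S$-vertex $s_{\min}$ in each ${\adj}_{\subscript}$, and correspondingly the "edge" term $A^+(e)$ for the edge $e$ of $S$ whose image straddles the "bottom" of the stacking is forced to inject into something with the minimal-level correction. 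The cleanest formulation: choose the edge $e_\star$ of $S$ such that in the total order on the relevant fiber, $\sigma(p)$ for $p\in\pb_{e_\star}$ sits at the bottom; then $A^+(e_\star)$ maps into $A^+(v)$ where $v$ is the bottom endpoint, and by the convention ${\adj}^+(s-1)=\hgraph$ at the minimum, $A^+(\text{bottom vertex})$ is all of $H_1$ of the first sublevel set, giving the slack needed. Then $\chi(\mathcal{C})=\sum_v\dim A^+(v)-\sum_e\dim A^+(e)\ge\dim A^+(s_0)$ for the chosen maximal $s_0$, because all edge terms except possibly one are dominated by vertex terms on the "other side", and the cyclic structure with a genuine minimum breaks the symmetry.

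The main obstacle is precisely this last point: making rigorous how the stacking's total order — which has a genuine bottom element on each fiber — interacts with the cyclic (bottomless) combinatorics of the circle $S$ to produce the inequality $\chi(\mathcal{C})\ge\max_s\dim A^{\pm}(s)$ rather than merely $\chi(\mathcal{C})\ge 0$. I would carry this out by: (1) fixing a point $x_0\in w(S)$ and the associated total order $\leq_{x_0}$ with minimum $s_{\min}$; (2) identifying the unique edge $e_\star$ of $S$ over $x_0$ (or the unique place where the cyclic order of $S$-vertices "wraps") and observing that the boundary map $A^{\pm}(e_\star)\to A^{\pm}(s_{\min})$ together with the convention for sublevel sets at the minimum kills the contribution of $e_\star$ to the alternating sum, or bounds it favorably; (3) for every other edge $e_i$, using injectivity of $A^{\pm}(e_i)\hookrightarrow A^{\pm}(v_i)$ (say, the endpoint "closer to the top") to cancel $\dim A^{\pm}(e_i)$ against $\dim A^{\pm}(v_i)$ in a telescoping fashion around the cycle; (4) what remains after all cancellations is exactly $\dim A^{\pm}(s_0)$ for the vertex $s_0$ realizing the maximum, since it is the one vertex term that never gets cancelled by an incoming edge term. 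Finally, take the maximum over the two signs $\pm$ and over $s\in S$ to conclude $\max_s\dim A^{\pm}(s)\le\chi(\mathcal{C})$, as claimed.
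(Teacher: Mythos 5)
Your outline does follow the same skeleton as the paper's proof: reindex $\mathcal{C}^{\pm}$ over the circle $S$, use the injectivity (\ref{boundaryonelevel}) to get $\dim A^{\pm}(e)\leq\dim A^{\pm}(v)$ whenever $v$ is an endpoint of the edge $e$ of $S$, break the circle at one place, and apply the alternating-sum estimate of Remark~\ref{rem: Naive estimate} over the resulting interval. But the step you yourself flag as ``the main obstacle'' is precisely the step you never carry out, and the mechanism you guess at is not the right one. What actually breaks the cyclic symmetry is a vanishing statement for an \emph{edge-indexed} term of $\mathcal{C}^{\pm}$: pick an edge $g\in w(E_S)\subseteq E_{\und}$ and let $m^{+}$ (resp.\ $m^{-}$) be the minimal (resp.\ maximal) element of $S_g$ for the stacking order $\leq_g$. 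The sublevel set $\adj_g^{+}(m^{+})$ consists of the discrete set $\hgraph_g$, the single $S$--vertex $m^{+}$, and the edges $\pb_{m^{+}}$; since $\adj$ is diagrammatically irreducible, Lemma~\ref{lem: DI properties} says $\adj_g$ is a \emph{simple} bipartite graph, so distinct edges of $\pb_{m^{+}}$ have distinct $\lambda$--images and $\adj_g^{+}(m^{+})$ is a star, hence a forest. Therefore $A^{+}(m^{+})=0$, and symmetrically $A^{-}(m^{-})=0$. It is this vanishing that lets one delete the edge $m^{\pm}$ from $S$ without changing $\chi(\mathcal{C}^{\pm})$, reducing the cyclic alternating sum to one over an interval, where Remark~\ref{rem: Naive estimate} yields $\chi(\mathcal{C})\geq\max\{a_i,b_i\}$.

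Your proposed fix does not supply this. You suggest that the convention interpreting $\adj^{+}(s-1)$ as $\hgraph$ at the minimum makes $A^{+}$ of the bottom \emph{vertex} ``all of $H_1$ of the first sublevel set, giving the slack needed''; but vertex terms enter the characteristic with a plus sign, so a large vertex term provides no slack for a lower bound --- what is needed is an edge term that vanishes --- and you never prove $A^{\pm}(e_\star)=0$ for your chosen edge $e_\star$, only that it maps injectively somewhere. Moreover, that vanishing is not formal: it genuinely uses diagrammatic irreducibility (simplicity of $\adj_g$), which your argument never invokes. If $\adj_g$ had two parallel edges joining $m^{+}$ to the same vertex of $\hgraph_g$, the bottom sublevel set would already contain a cycle, $A^{+}(m^{+})$ would be non-zero, and the circle could not be cut; your steps (2)--(4) would then only recover $\chi(\mathcal{C})\geq 0$, which you correctly identify as too weak. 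So the proposal has a genuine gap: the key vanishing $A^{+}(m^{+})=A^{-}(m^{-})=0$, proved via simplicity of the edge graphs $\adj_g$, is missing.
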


The proof uses the following naive estimate.

\begin{remark}\label{rem: Naive estimate}
  Let $a_1,\dotsc,a_n$ and $b_1,\dotsc,b_{n-1}$ be non-negative
  integers, and suppose that $a_i\geq b_i\leq a_{i+1}$ for $i=1\dotsc
  n-1$. Then
  \[
  a_1-b_1+\dotsb-b_{n-1}+a_n\geq \max\{a_i,b_i\}~.
  \]
\end{remark}

\begin{proof}[Proof of Lemma~\ref{charc}]
  Pick an edge $g\in w(E_S)\subseteq E_{\und}$, and let $m^+$ and
  $m^-$ be the minimal and maximal elements of $S_g$ with respect to
  the order $\leq_g$. Since $m^{\pm}$ is minimal/maximal,
  \[
    V_{\adj_g^{\pm}(m^{\pm})}=\hgraph_g\cup\{m^{\pm}\}
  \]
  and
  \[
    E_{\adj_g^{\pm}(m^{\pm})}=\{p\mid\sigma(p)=m^{\pm}\}~.
  \]
  By Lemma~\ref{lem: DI properties} ${\adj}_g$ is simple, so if $p\in
  E_{\adj_g^{\pm}(m^{\pm})}$ then $p$ is determined by $\lambda(p)$,
  and ${\adj}^{\pm}_g(m^{\pm})$ is therefore $\hgraph_g$ with
  $\lambda({\pb}_{m^{\pm}})$ coned off, so $A^+(m^+)\cong
  A^-(m^-)\cong 0$.  Removing $m^{\pm}$ from $S$ therefore doesn't
  change the characteristic of the chain complexes
  $\mathcal{C}^{\pm}$, i.e.
  \[
    \chi(\mathcal{C}^{\pm})=\chi(\mathcal{C}^{\pm}\vert_{S\smallsetminus{m^{\pm}}})
  \]
  where
  \[
    \mathcal{C}^{\pm}\vert_{S\smallsetminus m^{\pm}}=\left(0\to \bigoplus_{e\in
        E_S\smallsetminus m^{\pm}}A^{\pm}(e) \to \bigoplus_{v\in
        V_S}A^{\pm}(v)\to 0\right).
  \]
  The chain complex $\mathcal{C}^{\pm}\vert_{S\smallsetminus m^{\pm}}$
  is over an interval $S\smallsetminus m^{\pm}$, which makes its Euler
  characteristic easy to estimate. Label and reorient $S$ so that
  $V_S=\{v^{\pm}_1,\dotsc,v^{\pm}_n\}$ and
  $E_S=\{m^{\pm},e^{\pm}_1,\dotsc,e^{\pm}_{n-1}\}$ with
  $\iota(e^{\pm}_i)=v^{\pm}_i$ (for $i=1,\dotsc, n$) and
  $\tau(e^{\pm}_i)=v^{\pm}_{i+1}$ (for $i=1,\dotsc, n-1$). Set
  $a^{\pm}_i=\dim(A^{\pm}(v^{\pm}_i))$ and
  $b^{\pm}_j=\dim(A^{\pm}(e^{\pm}_j))$. Then
  \[
  \chi(\mathcal{C})=\chi(\mathcal{C}^{\pm})=a^{\pm}_1-b^{\pm}_1+a^{\pm}_2-\dotsb+a^{\pm}_{n-1}-b^{\pm}_{n-1}+a^{\pm}_n~.
  \]
  Since $\alpha\colon A^{\pm}(e)\to A^{\pm}(\alpha(e))$ is injective,
  $a^{\pm}_i\geq b^{\pm}_i\leq a^{\pm}_{i+1}$ for $i=1,\dotsc ,n-1$, and
  \[
  \chi(\mathcal{C})\geq\max\{a^{\pm}_i,b^{\pm}_i\}=\max\{\dim(A^{\pm}(s))\mid
  s\in S\}\geq 0
  \]
  by Remark \ref{rem: Naive estimate}.
\end{proof}

\begin{remark}
  It is not clear from the start that $\chi(\mathcal{C})$ is
  non-negative. It follows from Mayer--Vietoris that the chain
  complexes $\mathcal{C}^{\pm}\smallsetminus m^{\pm}$, and therefore
  $\mathcal{C}^{\pm}$, have their homology concentrated in dimension
  $0$.
  \[
  \chi(\mathcal{C}^{\pm})=\dim(H_0(\mathcal{C}^{\pm}))
  \]

  The special case $\chi(\hgraph)=\chi({\und})$ is of some interest
  since it implies the theorems of Baumslag and Stallings.  In these
  cases $\chi(\mathcal{C})=0$, and by Lemma~\ref{charc}
  $\dim(A^{\pm}(s))=0$ for all $s\in
  S$. By~(\ref{filterdecomposition}), $H_1({\adj}_{\subscript})=0$ for
  all $\subscript\in{\und}$, but a connected graph with trivial
  homology is a tree. If $\deg(\sigma)\geq 2$ then no $s\in
  S_{\subscript}$ has valence one, so there are at least two
  valence-one vertices in $\hgraph_{\subscript}$, hence $\lambda$ is
  strongly reducible, and therefore reducible. This case is argued
  differently in the paper~\cite{adjoiningroots}. There it was shown
  directly that the vertices in $\hgraph_{\subscript}$ are cutpoints
  in $\adj_{\subscript}$, and acylindricity of the associated graph of
  groups $\Delta$ then implied that the edge and vertex spaces are
  trees. Since this is not true in general, we use stackings to argue
  indirectly that if $\chi(\mathcal{C})<\deg(\sigma)-1$ then the edge
  spaces have ``treelike'' features, and ultimately, valence one
  vertices.
  
\end{remark}

\subsection{The up-down lemma and the proof of Theorem~\ref{maintheorem}}\label{Ss: Updown lemma}

The final ingredient of the proof of the dependence theorem is the
\emph{up-down lemma}. To formulate it, we first recapitulate some of
the discussion from Section~\ref{fiberwisefiltering} in general terms.

Consider a finite bipartite graph $B=(U\sqcup V,E,\sigma,\lambda)$
with an order $\leq$ on $V$. For $v\in V$ define
\[
B^+(v)=U\cup\{v'\mid v'\leq v\}\cup\{e\mid\sigma(e)\leq v\}
\]
and
\[
B^-(v)=U\cup\{v'\mid v'\geq v\}\cup\{e\mid\sigma(e)\geq v\}\,.
\]
Let
\[
A^{\pm}(v)=H_1(B^{\pm}(v))/H_1(B^{\pm}(v\mp 1))\,,
\]
where we interpret $B^+(v-1)$ as $U$ if $v$ is minimal and $B^-(v+1)$
as $U$ if $v$ is maximal. A vertex $v\in V$ is \emph{good} if
\[
\max\{\dim(A^{\pm}(v))\}=\valence(v)-1~.
\]
A vertex $u\in U$ is \emph{good} if it has valence one.

\begin{figure}[ht]
  \centerline{
    \includegraphics[width=\textwidth]{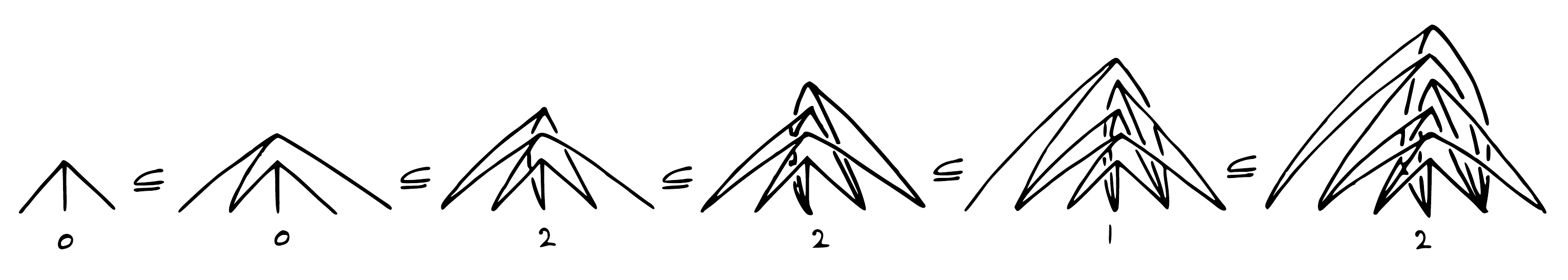}
  }
  \caption{Illustration of a filtration associated to an order $\leq$
    on a (simple) bipartite graph $B$. The elements of $U$ are all
    drawn at the same level, and elements of $V$ are placed
    vertically. To keep the pictures uncluttered we omit elements of
    $U$ which aren't connected to vertices in $V\cap B^+(v)$. The
    number below each graph is the dimension of $A^+(v)$ for the
    vertex $v$ added at that stage. The graph $B$ has $6+6$ vertices
    and $18$ edges, for a characteristic of $-6$, and is connected
    with first betti number $0+0+2+2+1+2$.}
  \label{filtration}
\end{figure}

\begin{lemma}[Up-down lemma]
  \label{updownlemma}
  Let $B$ be a simple connected bipartite graph which is not a
  point. Let $\leq$ be an order on $V$. Then
  \[
    \vert\{p\in V\cup U\mid p\hfill\mbox{ is good.}\}\vert  \geq 2\,.
  \]
\end{lemma}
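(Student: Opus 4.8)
The plan is to prove Lemma~\ref{updownlemma} directly, by induction on the number of vertices $|V_B|=|C|+|U|$, organised around the block (cut-vertex) structure of $B$. The base case is $|V_B|=2$, where $B$ is a single edge and both endpoints, having valence one, are good. For the inductive step I would first extract an elementary computation that makes the notion of ``good'' concrete.

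The computation is this: passing from $B^{\pm}(c\mp 1)$ to $B^{\pm}(c)$ amounts to adjoining the vertex $c$ together with its $\valence(c)$ incident edges, so an Euler-characteristic count gives $\dim A^{\pm}(c)=\valence(c)-m^{\pm}(c)$, where $m^{+}(c)$ (resp.\ $m^{-}(c)$) is the number of connected components of $B^{+}(c-1)$ (resp.\ $B^{-}(c+1)$) that meet the neighbourhood of $c$. Since $m^{\pm}(c)\geq 1$ this gives $\dim A^{\pm}(c)\leq\valence(c)-1$, with equality exactly when all neighbours of $c$ lie in a single component on the relevant side; hence a $C$-vertex $c$ is good precisely when its neighbours are confined to one component of $B^{+}(c-1)$ or to one component of $B^{-}(c+1)$. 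I would record two consequences: (i) any valence-one vertex of $C$ or of $U$ is good; (ii) because $B$ is connected, every component of $B\setminus c$ contains a neighbour of $c$, so for $c_{0}=\min C$ one has $\dim A^{+}(c_{0})=0$ and $\dim A^{-}(c_{0})=\valence(c_{0})-b_{0}(B\setminus c_{0})$ (with $b_0$ the number of components), whence $c_{0}$ is good iff $B\setminus c_{0}$ is connected, and dually for $\max C$.

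The inductive step then splits into two cases. If $B$ has no cut vertex and $|V_{B}|\geq 3$, then every valence is at least two (a leaf's neighbour would be a cut vertex) and $|C|\geq 2$ (if $|C|=1$, $B$ is a star with at least two leaves, whose centre is a cut vertex); so $\min C\neq\max C$ and, by consequence (ii), both are good. If $B$ has a cut vertex $v$, write $B=B_{1}\cup\dots\cup B_{m}$ with $m\geq 2$, where $B_{j}$ is the union of the $j$-th component $K_{j}$ of $B\setminus v$ with $v$ and the connecting edges; each $B_{j}$ is connected, is not a point, and has strictly fewer vertices, so the inductive hypothesis applied to $B_j$ with the order on $C_{B_{j}}$ restricted from $C$ gives at least two good vertices in each $B_{j}$. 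The key step is a locality statement: since $v$ separates the $B_{j}$, every cycle and every sublevel set $B^{\pm}_{B}(p)$ decomposes along the $B_{j}$, and for $p\neq v$ the $B$-predecessor and the $B_{j}$-predecessor of $p$ cut out the same sublevel set inside the unique $B_{j}$ containing $p$; consequently $\dim A^{\pm}_{B}(p)=\dim A^{\pm}_{B_{j}}(p)$ and $\valence_{B}(p)=\valence_{B_{j}}(p)$, so $p$ is good in $B$ iff it is good in $B_{j}$. Finally $v$ is never good in $B$: if $v\in U$ then $\valence_{B}(v)=\sum_{j}\valence_{B_{j}}(v)\geq m\geq 2$; if $v\in C$ then the same decomposition gives $\dim A^{\pm}_{B}(v)=\sum_{j}\dim A^{\pm}_{B_{j}}(v)\leq\sum_{j}(\valence_{B_{j}}(v)-1)=\valence_{B}(v)-m\leq\valence_{B}(v)-2$. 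Putting this together, the good vertices of $B$ are exactly the good vertices of the $B_{j}$ other than $v$, so there are at least $\sum_{j}(2-1)=m\geq 2$ of them.

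I expect the main obstacle to be the locality step in the cut-vertex case: making precise the splitting $H_{1}(B^{\pm}_{B}(p))\cong\bigoplus_{j}H_{1}(B^{\pm}_{B}(p)\cap B_{j})$ and, above all, matching ``$p-1$ in $B$'' with ``$p-1$ in $B_{j}$'' so that the freshly created homology $A^{\pm}_{B}(p)$ is genuinely supported on a single block and equals $A^{\pm}_{B_{j}}(p)$; once this is in place, everything else is the elementary Euler-characteristic identity together with bookkeeping. A more computational alternative would route through the auxiliary chain complexes $\mathcal{C}^{\pm}$ and Lemma~\ref{charc}, but the block induction keeps the role of connectivity transparent and treats $U$-vertices uniformly.
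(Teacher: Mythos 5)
Your proposal is correct, but it takes a genuinely different route from the paper. The paper inducts on $\vert C\vert$ and works only with the two extremes of the order: it applies the long exact sequence of the pair $(B,B^{+}(\maximum-1))$ to show that if the maximal element $\maximum$ is not good then $B\setminus\maximum$ is disconnected, then passes to the closure of a component missing $\minimum$ (and symmetrically) and applies induction there. You instead induct on the total number of vertices, organised by the cut-vertex/block structure, and your engine is the explicit count $\dim A^{\pm}(c)=\valence(c)-m^{\pm}(c)$, which characterises goodness of a $C$-vertex as ``all neighbours lie in one component of the strict sublevel set on one side''; from this you get that extremal vertices are good iff their removal does not disconnect $B$, that cut vertices are never good, and that goodness is local to blocks. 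Both arguments are sound; I checked your locality step and it does work, precisely because the strict-sublevel formulation makes ``$c-1$ in $B$'' versus ``$c-1$ in $B_j$'' a non-issue ($B_j^{+}(c-1)=B^{+}(c-1)\cap B_j$), and any path between neighbours of $c$ that leaves $B_j$ must pass through the cut vertex and can be shortcut there. What each approach buys: the paper's induction is shorter and needs the splitting-off construction only at the extremes of the order, though it too silently uses a locality statement (a good vertex of $B_{\maximum}$ other than $\maximum$ is good in $B$, where $\maximum$ is a cut point) that your write-up makes explicit and justifies; your version costs more bookkeeping but yields a sharper picture -- an explicit combinatorial criterion for goodness, at least one good vertex per branch at a cut vertex, and a uniform treatment of $U$-vertices -- and in particular it cleanly recovers the paper's unproved claims about which vertices of the pieces remain good in $B$.
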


\def\maximum{{m^-}}
\def\minimum{{m^+}}

\begin{proof}
  The proof is by induction on $\vert V\vert$. Suppose that $\vert
  V\vert=1$. If $\vert U \vert=1$ then $V=\{v\}$, $U=\{u\}$, $v$ has
  valence $1$, $\dim(A^{\pm}(v))=\valence(v)-1=0$, so $v$ is good, and
  $\vert\lambda^{-1}(u)\vert=1$ so $u$ has valence one, so is good. If
  $\vert U\vert\geq 2$ then there are $\vert U\vert\geq 2$ valence one
  vertices in $U$.

  Suppose that $\vert V\vert\geq 2$, and let $\maximum$ and $\minimum$
  be the maximal and minimal elements of $V$, respectively. If
  $\maximum$ and $\minimum$ are both good then we are done.

  The long exact sequence for the pair $(B,B^+(\maximum-1))$
  reduces to the exact sequence
  \begin{equation}
    0\to A^+(\maximum) \to H_1(B,B^+(\maximum-1))\to
    H_0(B^+(\maximum-1))\to H_0(B)\to 0\,.\label{les}
  \end{equation}
  Since $B\smallsetminus B^+(\maximum-1)$ has one vertex $\maximum$
  and has $\valence(\maximum)$ edges connecting $B^+(\maximum-1)$ to
  $\maximum$, the relative homology group $H_1(B,B^+(\maximum-1))$ is
  $\valence(\maximum)-1$ dimensional. Since $B$ is connected,
  $\dim(H_0(B))=1$.  Suppose now that $\maximum$ is not good. Since
  $B$ is simple, $\dim(A^-(\maximum))=0$, and since $\maximum$ is not
  good, $\dim(A^+(\maximum))<\valence(\maximum)-1$, so by~(\ref{les})
  $\dim(H_0(B^+(\maximum-1)))>1$, and $B^+(\maximum-1)$ is therefore
  not connected and $B\smallsetminus \maximum$ has at least two
  connected components.  Let $B_\maximum$ be the closure of a
  connected component of $B\smallsetminus \maximum$ which doesn't
  contain $\minimum$. By induction on $\vert V\vert$, $B_\maximum$ has
  at least two good vertices, one of which is not $\maximum$. Let $g$
  be this vertex. If $\minimum$ is good then $\minimum$ and $g$ are
  both good.  Argue symmetrically if $\maximum$ is good and $\minimum$
  is not good.

  Thus we assume both $\maximum$ and $\minimum$ are not good. Again,
  let $B_\maximum$ be the closure of a connected component of
  $B\smallsetminus \maximum$ which doesn't contain $\minimum$, and let
  $B_\minimum$ be the closure of a connected component of
  $B\smallsetminus \minimum$ which doesn't contain $\maximum$. The
  vertices $\maximum$ and $\minimum$ are good in $B_\maximum$ and
  $B_\minimum$, respectively, and $B_\maximum$ and $B_\minimum$ are
  disjoint. By induction on $\vert V\vert$, $B_\maximum$ and
  $B_\minimum$ each contain at least two good vertices, at least one
  of which is not $\maximum$ or $\minimum$, respectively. A good
  vertex in $B_\maximum$ which is not $\maximum$ is good in $B$, and a
  good vertex in $B_\minimum$ which is not $\minimum$ is good in $B$
  as well, so $B$ has at least two good vertices.
\end{proof}

\begin{figure}[ht]
  \centerline{
    \includegraphics[width=.9\textwidth]{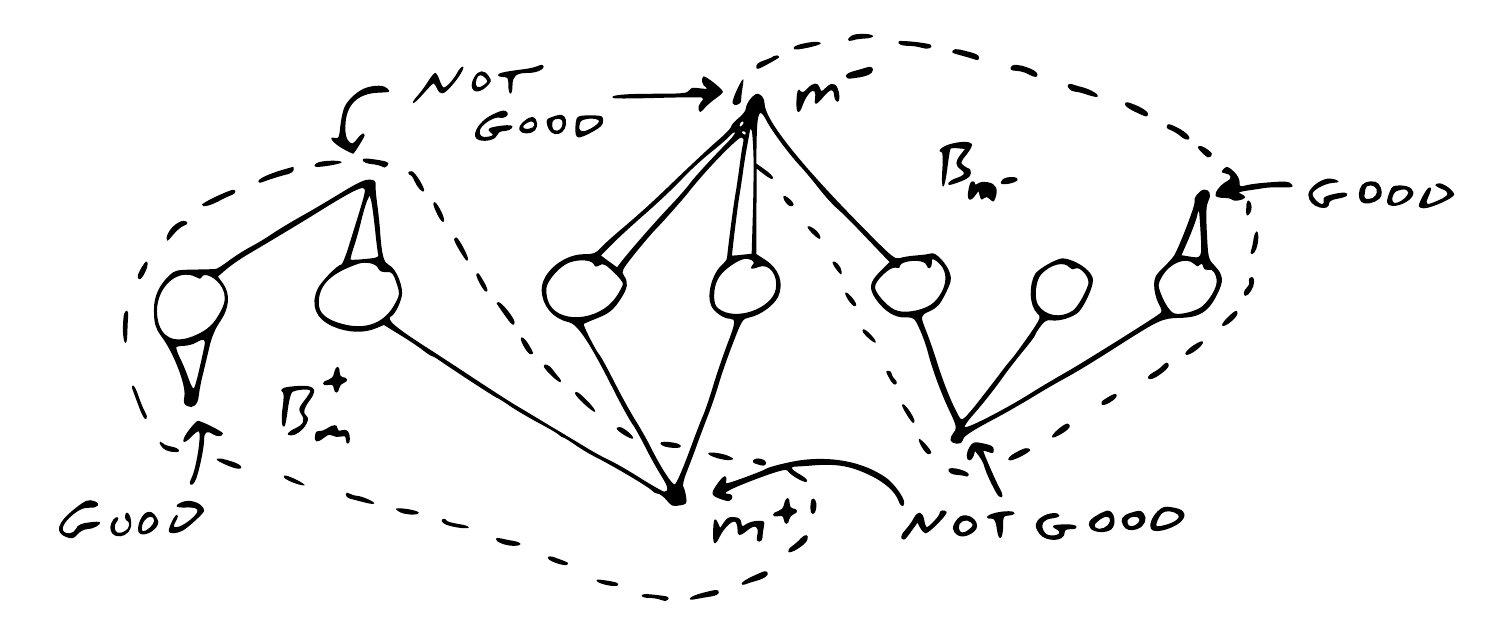}
  }
  \caption{Illustration for Lemma~\ref{updownlemma}. In this case
    neither $\maximum$ nor $\minimum$ is good. We picture $U$ as
    sitting below $\maximum$ and above $\minimum$.}
\end{figure}

With the up-down lemma in hand, we can finally prove the dependence
theorem.

\begin{proof}[Proof of Theorem~\ref{maintheorem}]
  We prove the contrapositive. Suppose that
  \[
    \chi(\hgraph)+\deg(\sigma)-1>\chi({\und})~.
  \]
  Our goal is to prove that ${\adj}$ is strongly independent.
    
  By Lemma~\ref{charc}, $\chi(\mathcal{C})$ is bounded from below by
  \[
    \max_{\subscript\in {\und}}\max_{s\in S_{\subscript}}\{\dim(A^{\pm}(s))\}
  \]
  so
  \[
    \chi(\hgraph) +
    \max_{\subscript\in {\und}}\max_{s\in S_{\subscript}}
    \{\dim(A^{\pm}(s))\}\leq
    \chi(\hgraph)+\chi(\mathcal{C}) = \chi({\und})\,,
  \]
  where the last equality is given by Lemma \ref{lem: Chain complex
    for adjunction space}.  If
  $\chi(\hgraph)+\deg(\sigma)-1>\chi({\und})$ then
  \begin{align}
    \max_{\subscript\in {\und}}\max_{s\in S_{\subscript}}\{\dim(A^{\pm}(s))\}<\deg(\sigma)-1~.\label{nearlythere}
  \end{align}
  To show that ${\adj}$ is strongly independent, we need to show that
  $\vert\partial {\adj}\cap {\adj}_e\vert\geq 2$ for each $e\in
  w(E_S)\subseteq E_{\und}$.
  
  To that end, choose $e\in w(E_S)$. By Lemma~\ref{lem: DI
    properties}(i), $W_e$ is a simple graph, so we may apply the
  up-down lemma to ${\adj}_e$ by setting $B={\adj}_e$, $V=S_e$,
  $U=\hgraph_e$, $E={\pb}_e$, and $\leq=\leq_e$. Remark~\ref{rem:
    Cycle valence} asserts that $\deg(\sigma)=\valence(s)$,
  so~(\ref{nearlythere}) implies that $\dim(A^{\pm}(s))<\valence(s)-1$
  for all $s\in S_e$.  In particular, no vertex in $S_e$ is
  good. Since the up-down lemma guarantees two good vertices in
  ${\adj}_e$, it follows that there are two good vertices in
  $\hgraph_e$. A good vertex in $\hgraph_e$ has valence one, so
  $\vert\partial {\adj}\cap {\adj}_e\vert\geq 2$.
  
  This is true for all $e\in w(E_S)$, but this is precisely what it
  means for the map $\lambda\colon {\pb}\to \hgraph$ to be strongly
  independent.
\end{proof}

\section{Stallings; Magnus and Lyndon; Duncan--Howie}
\label{applications}

In this section we show how the dependence theorem implies its
predecessors mentioned in the introduction.  We have already seen that
it implies Theorem \ref{introthm: main}, which in turn implies
Baumslag's theorem.  We next state a generalization of Stallings'
theorem and explain how it follows as well.  In the following
subsection we explain how the dependence theorem implies Magnus'
Freiheitssatz and Lyndon's asphericity theorem. Finally, we explain
how the dependence theorem implies a strengthening of the theorem of
Duncan--Howie.

\subsection{Conjugacy and homology}

\label{subsec: stallings}

A homomorphism of free groups $f\colon\HH\to{\FF}$ induces a map
$f_{\sim}\colon
\HH/\negthinspace\negthinspace\sim\thinspace\to{\FF}/\negthinspace\negthinspace\sim$
on sets of conjugacy classes.  A 1983 theorem of Stallings, which we
also think of as a kind of dependence theorem, relates $f_{\sim}$ to
the induced map on abelianizations, $f_{\#}\colon H_1(\HH)\to
H_1({\FF})$ \cite[Theorem 5.3]{stallings-surfaces}.

\begin{thm}[Stallings]
  Let $f\colon\HH\to{\FF}$ be an injection of finitely generated free
  groups. If $f_{\#}$ is injective then so is $f_\sim$.
\end{thm}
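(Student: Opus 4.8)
The plan is to deduce this from the group-theoretic dependence theorem, Theorem~\ref{introthm: main}, by contradiction. Assume $f_\sim$ is not injective: there are $v_1,v_2\in\HH\setminus\{1\}$ with $v_1\not\sim v_2$ in $\HH$ but $f(v_1)\sim f(v_2)$ in $\FF$; using that $f$ is injective, we will exhibit a non-trivial element of $\ker f_\#$. Replacing $\HH$ by a finite-rank free factor containing $v_1$ and $v_2$ preserves all of these hypotheses, so we may assume $\rk\HH<\infty$. Since abelianisation kills conjugacy, $f_\#([v_1])=f_\#([v_2])$ in $H_1(\FF)$, so if $[v_1]\neq[v_2]$ in $H_1(\HH)$ then $[v_1]-[v_2]$ already lies in $\ker f_\#$ and we are done. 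The remaining --- and main --- case is $[v_1]=[v_2]$ in $H_1(\HH)$, where the dependence theorem is needed.

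In that case I would build a graph of groups of the shape $\Delta(\{\HH\},\langle w\rangle,\{\langle v_{1,1}\rangle,\langle v_{1,2}\rangle\},\{n_{1,1},n_{1,2}\})$: one peripheral group $\HH$, a central infinite cyclic group, and two edges. Write $v_i=u_i^{k_i}$ with $u_i$ the indivisible root, and let $c$ be the indivisible root of $f(v_1)$ in $\FF$, so $f(v_1)=c^N$. Because $f(u_i)^{k_i}=f(v_i)$ is conjugate to $c^N$ and roots in free groups are essentially unique, $k_i\mid N$ and $f(u_i)$ is conjugate to $c^{m_i}$ where $m_i:=N/k_i\geq 1$; moreover one gets $f(u_1)=c^{m_1}$ on the nose. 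Take $v_{1,j}=u_j$ and $n_{1,j}=m_j$, and let $\psi\colon\pio\Delta\to\FF$ send $\HH$ by $f$, $w\mapsto c$, and the single stable letter to an element conjugating $f(u_2)$ to $c^{m_2}$. These choices are compatible, so $\psi$ is a well-defined homomorphism; let $\FF'=\operatorname{im}\psi$, a free group. Then $\psi\colon\pio\Delta\onto\FF'$ is surjective, $\psi|_\HH=f$ is injective, and $f(\HH)\leq\FF'\leq\FF$.

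The two remaining hypotheses of Theorem~\ref{introthm: main} must be checked for the family $\{\langle u_1\rangle,\langle u_2\rangle\}$ in $\HH$. Malnormality: each $\langle u_j\rangle$ is malnormal because $u_j$ is indivisible, and if some conjugate of $\langle u_1\rangle$ met $\langle u_2\rangle$ non-trivially then $u_1^a\sim u_2^b$ in $\HH$ for some $a,b\neq 0$; applying $f$ and comparing exponents of the indivisible element $c$ forces $\langle u_1\rangle$ to be conjugate to $\langle u_2\rangle$, hence $u_1\sim u_2^{\pm1}$, and a short computation with $f(v_1)\sim f(v_2)$ then gives $u_1\sim u_2$ and $k_1=k_2$, so $v_1\sim v_2$ --- a contradiction. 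Dependence: were the family independent, one of $u_1,u_2$, say $u_1$, would be primitive in $\HH$ with $u_2$ conjugate into a complementary free factor; but then $[v_1]=k_1[u_1]$ has non-zero $[u_1]$-coordinate in $H_1(\HH)$ while $[v_2]=k_2[u_2]$ has zero $[u_1]$-coordinate, contradicting $[v_1]=[v_2]$.

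Theorem~\ref{introthm: main} therefore applies and yields $m_1+m_2-1\leq(\rk\HH-1)-(\rk{\FF'}-1)$, so $\rk{\FF'}\leq\rk\HH-1$ since $m_1,m_2\geq 1$. As $f(\HH)\leq\FF'\leq\FF$, the map $f_\#$ factors through $H_1(\FF')\to H_1(\FF)$, so $\operatorname{rank}f_\#(H_1(\HH))\leq\rk{\FF'}\leq\rk\HH-1<\rk\HH=\operatorname{rank}H_1(\HH)$; hence $f_\#$ has non-trivial kernel, the desired contradiction, and $f_\sim$ is injective. The delicate part is not the closing inequality --- which falls straight out of the dependence theorem --- but the reductions of the middle two paragraphs: extracting roots and choosing the indices $m_j$ so that the cyclic edge groups form a malnormal family, and turning the homological hypothesis $[v_1]=[v_2]$ into a proof that this family is dependent. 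Keeping the bookkeeping of roots, conjugacies and exponents in the free groups straight is where all the care goes.
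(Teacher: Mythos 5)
Your proof is correct and takes essentially the same route as the paper: the paper deduces this statement from Corollary~\ref{stallings}, whose proof builds the same graph of groups $\Delta$ from maximal roots of the identified elements, applies Theorem~\ref{introthm: main} in the dependent case to force the free image to have rank less than $\rk{\HH}$, and in the independent case exhibits an explicit non-trivial class in $\ker(f_\#)$ --- the same two cases as yours, with your case split on $[v_1]=[v_2]$ being a repackaging of the dependence/independence dichotomy. The only real difference is that the paper runs an induction on $\gamma(f)$ to obtain the sharper quantitative bound $\rk{\ker(f_\#)}\geq\gamma(f)-1$, whereas you prove the qualitative statement directly for two identified conjugacy classes.
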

\noindent(A homomorphism $f$ for which $f_\sim$ is injective is
sometimes called a \emph{Frattini embedding}; cf.\
\cite{olshanskii_conjugacy_2004}.)

In this section we quantify Stallings' theorem, and compare how badly
$f_\sim$ and $f_\#$ may fail to be injective. In the case of $f_\#$,
the failure of injectivity is measured by the rank of the kernel. To
measure the failure of $f_\sim$ to be injective, we define
\[
  \gamma(f)=\max_{\left[v\right]\in{\FF}/\negthinspace\sim}\{\vert f^{-1}_{\sim}(\left[ v\right])\vert\}\in\NN\cup\{\infty\}~,
\]
the maximal number of conjugacy classes in $\HH$ that are identified
in ${\FF}$.  Using this terminology, Stallings' theorem asserts: if
$\gamma(f)>1$ then $\rk{\ker(f_\#)}>0$.
 
The main result of this section is a corollary of the dependence
theorem that strengthens Stallings' theorem by comparing
$\rk{\ker(f_\#)}$ to $\gamma(f)$.

\begin{corollary}
  \label{stallings}
  Let $f\colon\HH\to{\FF}$ be an injection of finitely generated free
  groups. Then
  \[
    \rk{\ker(f_{\#})}\geq\gamma(f)-1\,.
  \]
\end{corollary}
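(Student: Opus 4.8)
The plan is to feed the conjugacy-identification data of $f$ into the dependence theorem while reading off $\ker(f_\#)$ from the Euler characteristic of the pushout graph $\und$. Write $k=\gamma(f)$; we may assume $k\geq 2$, as otherwise the inequality is vacuous, and we may assume $\HH$ is finitely generated, since a finitely generated free factor $\HH_0\leq\HH$ containing representatives of the $k$ relevant conjugacy classes has $\gamma(f\vert_{\HH_0})\geq k$ while $H_1(\HH_0)$ injects into $H_1(\HH)$, so that $\rk{\ker((f\vert_{\HH_0})_\#)}\leq\rk{\ker(f_\#)}$. We argue by induction on $\rk{\HH}$, the case $\rk{\HH}\leq 1$ being immediate. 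Fix $v\in\FF$ with $\lvert f_\sim^{-1}([v])\rvert=k$ and conjugacy classes $[u_1],\dots,[u_k]$ with $f(u_j)\sim v$, and replace each $u_j$ by its root $z_j$ in $\HH$ and $v$ by its root $v_0$ in $\FF$. Standard facts about roots and centralizers in free groups show that $\{\langle z_j\rangle\}$ is a malnormal family of maximal cyclic subgroups of $\HH$ and that $f(z_j)$ is conjugate to $v_0^{p_j}$ for a non-zero integer $p_j$; the only point at which the distinctness of $[u_1],\dots,[u_k]$ is needed is to rule out $\langle z_i\rangle$ being conjugate to $\langle z_j\rangle$ for $i\neq j$, and there one uses that a non-trivial element of a free group is never conjugate to its inverse.

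Next I build the adjunction space as in the proof of Theorem~\ref{introthm: main}: let $\ambient$ be a rose realising $\FF$, let $\hgraph$ be the Stallings graph of $f(\HH)\leq\FF$ (an immersion, as $f$ is injective), represent each class $[z_j]$ by its cyclically reduced loop in $\hgraph$, and let $S$ be the cyclically reduced loop for $v_0$. Because $h\colon\hgraph\to\ambient$ is an immersion and $f(z_j)\sim v_0^{p_j}$, each such loop reads a power of the cyclically reduced word of $v_0$ and hence factors through $w\colon S\to\ambient$ via a covering map; this gives $\lambda\colon\pb\to\hgraph$, a covering $\sigma\colon\pb\to S$ of degree $N:=\sum_j\lvert p_j\rvert\geq k$, and an indivisible $w$, and $\adj$ is diagrammatically irreducible because $\{\langle z_j\rangle\}$ is malnormal. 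The homological input is that $h\colon\hgraph\to\ambient$ factors through $\adj\to\und\to\ambient$, so $f_\#$ factors through $H_1(\und)$; since $\und$ is connected, $\dim_\mathbb{Q}H_1(\und;\mathbb{Q})=1-\chi(\und)$, and therefore
\[
  \rk{\ker(f_\#)}=\rk{\HH}-\dim_\mathbb{Q}\operatorname{im}(f_\#\otimes\mathbb{Q})\geq\rk{\HH}-1+\chi(\und)~.
\]

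The argument now splits on the dichotomy of Definition~\ref{defn: Dependent}. If $\lambda$ is weakly dependent then Theorem~\ref{maintheorem} gives $\chi(\und)\geq\chi(\hgraph)+\deg(\sigma)-1=N-\rk{\HH}$, which with the displayed inequality yields $\rk{\ker(f_\#)}\geq N-1\geq k-1$. If $\lambda$ is strongly independent then it is independent, so (as $S$ is a circle) there is a free splitting $\HH=\HH'*\langle z\rangle$ in which $z$ is conjugate to some $z_{j_0}$ and every other $z_j$ is conjugate into $\HH'$. The $k-1$ classes $[u_j]$ with $j\neq j_0$ are then distinct conjugacy classes of $\HH'$ still mapping to $[v]$, so $\gamma(f\vert_{\HH'})\geq k-1$ and the inductive hypothesis gives $k-2$ linearly independent vectors in $\ker((f\vert_{\HH'})_\#)\otimes\mathbb{Q}\subseteq H_1(\HH';\mathbb{Q})$. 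Picking any $j_1\neq j_0$, the class $[u_{j_0}]-[u_{j_1}]$ lies in $\ker(f_\#)$ and, in the decomposition $H_1(\HH;\mathbb{Q})=H_1(\HH';\mathbb{Q})\oplus\mathbb{Q}\bar z$, has non-zero $\bar z$-component (it equals $q_{j_0}\bar z-[u_{j_1}]$ with $[u_{j_1}]\in H_1(\HH';\mathbb{Q})$); hence it is independent from those $k-2$ vectors and $\rk{\ker(f_\#)}\geq k-1$ here as well.

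The main obstacle is not the homological computation but the setup: one must do the bookkeeping needed to land in the hypotheses of Theorem~\ref{maintheorem} — passing to the malnormal family of maximal cyclic subgroups, identifying the exponents $p_j$, and verifying diagrammatic irreducibility — and, more substantively, one must accept that the dependence theorem says nothing when $\lambda$ is strongly independent, so that case has to be carried separately by induction on $\rk{\HH}$, with the extra element of $\ker(f_\#)$ produced by hand from the free splitting.
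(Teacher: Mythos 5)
Your proof is correct and takes essentially the same route as the paper's: a dichotomy in which the dependence theorem supplies the rank drop in the dependent case, and the independent case is handled by induction plus an explicit class in $\ker(f_\#)$ produced from the free splitting. The differences are minor repackagings: you apply Theorem~\ref{maintheorem} directly (via the adjunction space, the bound $\rk{\operatorname{im}(f_\#)}\leq 1-\chi(\und)$, and the weakly-dependent/strongly-independent dichotomy) where the paper cites its consequence Theorem~\ref{introthm: main} with the group-level dependent/independent dichotomy and the subgroup $L=\phi(\pio{\Delta})$; you induct on $\rk{\HH}$ rather than on $\gamma(f)$; and your kernel class $[u_{j_0}]-[u_{j_1}]$ plays the role of the paper's $n_m[v_1]-n_1[v_m]$.
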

\begin{proof}
  By Marshall Hall's theorem, $f$ identifies $H$ with a free factor in
  some subgroup $K$ of finite index in $F$. Each conjugacy class of
  $F$ splits into at most $|F:K|$ conjugacy classes in $K$, and does
  not split any further in $f(H)$. Therefore, $\gamma(f)\leq |F:K|$,
  and in particular is finite.

  The proof proceeds by induction on $m=\gamma(f)$. In the base case,
  $m=1$, there is nothing to prove, so we assume that $m\geq 2$.  We
  may also assume that $\HH$ and ${\FF}$ are finitely generated.  Let
  $u_1,\dotsc,u_m$ be a collection of non-conjugate elements realizing
  $\gamma(f)$.
  
  Since free groups have unique roots, for each $u_j$ there is a
  unique $v_j\in \HH$ such that $u_j= v_j^{k_j}$, with $k_j\geq 1$
  maximal.  Using uniqueness of roots again, it follows that
  $\{\langle v_j\rangle\}$ forms a malnormal family of cyclic
  subgroups of $\HH$.  Since the $f(u_j)$ are all conjugate to each
  other, the $f(v_j)$ are all conjugate into some common cyclic
  subgroup $\langle w\rangle$ of $\FF$. Therefore, each $f(v_j)$ is
  conjugate to $w^{n_j}$ for some unique integer $n_j$.  As in the
  introduction, these data define a graph of groups $\Delta$ and $f$
  extends to a homomorphism $\phi\colon \pio{\Delta}\to{\FF}$.  Let
  $L=\phi(\pio{\Delta})\leq{\FF}$.
   
  Since $f(\HH)$ is contained in $L$ we have
  $\rk{\im(f_\#)}\leq\rk{L}$ and so the rank-nullity lemma applied to
  $f_\#$ gives
  \[
  \rk{\ker(f_\#)}= \rk{\HH}-\rk{\im(f_\#)}\geq \rk{\HH}-\rk{L}~.
  \]
  If the malnormal family $\{\langle v_j\rangle\}$ is dependent then
  Theorem \ref{introthm: main} implies that
  \[
    \rk{\HH}-\rk{L} \geq \left(\sum_{i=1}^m n_i\right)-1\geq m-1~.
  \]
  These two estimates together imply the result, so it remains to deal
  with the case in which $\{\langle v_j\rangle\}$ is independent.

  After permuting indices and conjugating the $v_j$ appropriately,
  this means that
  \[
  \HH=\KK*\langle v_m\rangle
  \]
  and $v_j\in \KK$ for $j<m$.  Therefore, by the inductive hypothesis
  applied to $f\vert_{\KK}\colon \KK\to{\FF}$, we have
  $\rk{\ker(f_{\#}\vert_{H_1(\KK)})}\geq m-2$.  Since $f(v_1)^{n_m}$
  is conjugate to $f(v_m)^{n_1}$, the class
  \[
  c = n_m [v_1] - n_1 [v_m]
  \]
  is non-zero in $H_1(\HH)$, is contained in the kernel of $f_{\#}$,
  but is not in $H_1(\KK)$. Therefore,
  \[
  \rk{\ker(f_{\#})}\geq\rk{\ker(f_{\#}\vert_{H_1(\KK)})}+1\geq m-1
  \]
  as required.
  \end{proof}

\begin{remark}
Corollary~\ref{stallings} is sharp. Let
${\FF}=\langle a,b\rangle$ and
\[
 \HH=\langle a,bab^{-1},\dotsc,b^{n-2}ab^{2-n},b^{n-1}ab^{1-n}\rangle
\]
with $f$ the inclusion map. The $n$ basis elements $b^iab^{-i}$ of
$\HH$ are conjugate in ${\FF}$ and $\rk{\ker(f_{\#})}=n-1$.
\end{remark}

\subsection{The Freiheitssatz and Lyndon asphericity}

We again consider a one-relator group $\GG={\FF}/\ncl{w}$. As usual,
we think of $\FF$ as the fundamental group of a graph $\ambient$, and
realise $w$ as an immersion $S\immerses \ambient$, where $S$ is a
cycle. Note that $w$ may be a proper power $v^k$, where $k\geq 1$ is
assumed to be maximal. In this section we show how
Corollary~\ref{monotonicity} implies the Freiheitssatz and Lyndon
asphericity. In what follows $X$ is the presentation complex
$({\ambient},S,w)$ of the one-relator group $G$, where
$w\colon{S}\to{\ambient}$ is the attaching map of the two cell, and
$Z$ is the presentation complex of the one-relator group
$({\ambient},S,v)$. There is a natural map $q\colon X\to Z$, equal to
the identity on $\Omega$ and a $k$ sheeted cover on $S$. Note that $q$
is \emph{not} a branched map in the sense of Definition
\ref{def: branched map} if $k>1$.

\begin{definition}[Surface diagram]
  A \emph{singular surface diagram} in $X$ is a morphism $f\colon Y\to
  X$ of combinatorial complexes, such that the link of every vertex in
  $Y$ is a union of points, cycles and intervals. A singular surface
  diagram $f\colon Y\to X$ is \emph{reduced} if the induced map
  $q\circ f$ is a branched map.
\end{definition}

This definition agrees with the usual notions of reduced disk and
sphere diagram.  The following theorem, which is the main theorem of
this section, is a common generalization of Magnus' Freiheitssatz and
Lyndon asphericity.

\begin{theorem}[Magnus, Lyndon]\label{thm: Magnus Lyndon}
  Let $X$ be the presentation complex of a one-relator group, and
  $f\colon Y\to X$ a reduced singular surface diagram. If $\chi(Y)\geq
  1$ then $w(S)\subseteq f(\partial Y)$.
\end{theorem}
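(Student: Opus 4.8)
The plan is to deduce Theorem \ref{thm: Magnus Lyndon} from the one-relator pushout inequality, Corollary \ref{monotonicity}, by a contradiction argument. Suppose $f\colon Y\to X$ is a reduced singular surface diagram with $\chi(Y)\geq 1$ but $w(S)\not\subseteq f(\partial Y)$. The first step is to arrange the hypotheses of Corollary \ref{monotonicity}: since $f$ is reduced, the composite $q\circ f\colon Y\to Z$ is a branched map to the presentation complex $Z=\ambient\cup_v D$ of the one-relator group with $v$ \emph{not} a proper power, so Corollary \ref{monotonicity} applies to $q\circ f$ rather than to $f$ directly. Its conclusion gives $\chi(Y)+\sum(n_e-1)\leq\chi(\hatt Y)$, provided the restriction of $q\circ f$ to $\partial Y$ fails to be at least two-to-one onto $v(S)$. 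This is exactly where the hypothesis $w(S)\not\subseteq f(\partial Y)$ is used: if $w(S)$ is not covered at all by $f(\partial Y)$ then, after translating via $q$, some point of $v(S)$ is covered zero times by $(q\circ f)(\partial Y)$, so $f\vert_{\partial Y}$ is certainly not at least two-to-one onto $v(S)$.

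Next I would extract a contradiction from the numerical inequality. Since $\sum(n_e-1)\geq 0$ always, Corollary \ref{monotonicity} forces $\chi(Y)\leq\chi(\hatt Y)$. On the other hand $\hatt Y$ is a one-relator complex: its $1$-skeleton $\und$ is a graph and it has exactly one $2$-cell attached along $w$ (or $v$), so $\chi(\hatt Y)=\chi(\und)+1\leq 1$, because $\und$ is a non-empty connected graph and hence $\chi(\und)\leq 1$, with equality only when $\und$ is a tree. Combining, $\chi(Y)\leq 1$, so the hypothesis $\chi(Y)\geq 1$ forces $\chi(Y)=1$, $\chi(\und)=1$, and $\sum(n_e-1)=0$ (so $q\circ f$ has no genuine branching). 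The equality $\chi(\und)=1$ means $\und$ is a tree, and then $\hatt Y=\und\cup_w D$ has $\pi_1$ trivial — but $\hatt Y$ dominates, via the maximality in the poset $\mathcal O(Y,X)$, a one-relator complex mapping to $X$; I would need to argue that a tree-plus-one-cell pushout of an indivisible word $w$ is impossible unless $w$ is trivial, which contradicts $w$ representing a non-trivial relator, or alternatively run the boundary-tracking directly: the attaching circle $S\to\und$ of the $2$-cell of $\hatt Y$ lifts the immersed word $w$, and an immersed non-trivial cyclic word cannot be nullhomotopic in a tree.

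The main obstacle I expect is the bookkeeping around the branched cover $q$ and the case $k>1$ (when $w=v^k$ is a proper power): Corollary \ref{monotonicity} is stated for the \emph{indivisible} attaching word, so one must consistently work in $Z$, track how $\partial Y$ maps to $v(S)$ versus $w(S)$ under $q$, and check that ``$w(S)\subseteq f(\partial Y)$'' in $X$ is equivalent to the corresponding two-to-one-or-more statement in $Z$ that Corollary \ref{monotonicity} controls. A secondary subtlety is verifying that a reduced singular surface diagram, whose vertex links are unions of points, circles and intervals, really does satisfy the link-immersion condition required to be a branched map after composing with $q$ — this is precisely the content of the definition of ``reduced'', so it should be immediate, but it must be invoked explicitly. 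Finally, I would spell out how the two classical statements fall out: taking $Y$ a disc diagram recovers the Freiheitssatz (a word in a proper free factor that dies must be conjugate into that free factor, else one builds a reduced disc diagram with $\chi=1$ avoiding part of $w(S)$), and taking $Y$ a sphere diagram recovers Lyndon asphericity (a reduced spherical diagram has $\chi=2\geq 1$, forcing $w(S)\subseteq f(\partial Y)=\varnothing$, hence no such diagram exists).
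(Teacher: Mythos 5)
Your first half follows the paper's own route: since the diagram is reduced, compose with $q$ to replace $X$ by $Z$ (so the relator $v$ is indivisible), note that $w(S)\not\subseteq f(\partial Y)$ certainly prevents $f\vert_{\partial Y}$ from being at least two-to-one onto the relator's image, and apply Corollary \ref{monotonicity} to get $\chi(Y)\leq\chi(\hatt Y)$. That part is fine. The problem is your endgame. Your numerics are off by one: from ``$\und$ is a connected graph, so $\chi(\und)\leq 1$'' you only get $\chi(\hatt Y)\leq 2$; the correct bound $\chi(\hatt Y)\leq 1$ requires precisely the observation you relegate to an afterthought, namely that $\und$ cannot be a tree because the attaching circle of the single $2$-cell of $\hatt Y$ maps onto the immersed, non-trivial word $v$ in $\ambient$, and no such loop factors through a tree. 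Once that is in place, the equality case forced by $1\leq\chi(Y)\leq\chi(\hatt Y)\leq 1$ is $\chi(\und)=0$, \emph{not} $\chi(\und)=1$. So your final contradiction (``$\und$ is a tree, hence $w$ would be nullhomotopic in a tree'') rests on a case that never occurs, while the case that does occur --- $\und$ homotopy equivalent to a circle, $\hatt Y$ the disc $D$ --- is not contradictory at all: this is exactly the situation of a reduced disc diagram, which genuinely exists.

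This is a real gap, not just bookkeeping, because no purely numerical argument can finish from here: when $\chi(Y)=1$ and $\hatt Y\cong D$ the assumption $w(S)\not\subseteq f(\partial Y)$ must be refuted by analysing $Y$ itself. The paper does this structurally: since $Y\to Z$ is a branched map it does not fold faces, and since it factors through the disc $D$, no two $2$-cells of $Y$ can share an edge; hence $Y$ is a tree of disks, and the boundary of a tree of disks visibly surjects onto $w(S)$, contradicting the assumption. Your proposal is missing this step entirely (and with it the geometric content of the Freiheitssatz case $Y=$ disc). To repair it, keep your first half, prove $\chi(\hatt Y)\leq 1$ via the ``no immersed loop in a tree'' observation, and then replace the tree-contradiction by the tree-of-disks analysis of $Y$ in the equality case.
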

\begin{proof}
  If $w(S)\not\subseteq f(\partial Y)$ then certainly
  $w(S)\not\subseteq q(f(\partial Y))$, so we may replace $X$ by $Z$.
  Let $\hatt Y$ be the one-relator pushout of the map $Y\to Z$. By
  Corollary~\ref{monotonicity}, $\chi(Y)\leq\chi(\hatt Y)$, so
  $\chi(\hatt Y)\geq 1$. Since $\hatt Y$ is one-relator, and $v$ is
  indivisible, $\hatt Y$ is the disk $D$, and $\und=\partial D$ is a
  cycle. Since $Y\to Z$ is a branched map, it doesn't fold
  faces, but since $Y\to Z$ factors through $D$, no two two-cells in
  $Y$ share an edge. Thus $Y$ is a tree of disks. In this case,
  $\partial Y$ clearly surjects $w(S)$.
\end{proof}

Magnus' Freiheitssatz \cite{magnus-freiheitssatz}, corresponding to
the case when $Y$ is a disk, and Lyndon asphericity
\cite{lyndon-cohomology,cockcroft_two-dimensional_1954}, corresponding
to the case where $Y$ is a sphere, follow immediately.

\begin{corollary}[Magnus' Freiheitssatz]\label{cor: Freiheitssatz}
  Consider a one-relator group $G=F/\ncl{w}$.  If $\HH$ is a proper
  free factor of free group ${\FF}$ and the natural map $\HH\to G$ is
  not injective then $w$ is conjugate into $\HH$.
\end{corollary}
\begin{proof}
  Let $\HH$ be a free factor of $\FF$ and $\gamma \in \HH$ an element
  that dies in $G$.  Take $X$ to be a presentation complex of $G$ in
  such a way that $\HH$ is realized by a subgraph of the one-skeleton.
  We can realize $\gamma$ as an immersed loop in $X$ with image in the
  subgraph that carries $\HH$.  Since $\gamma$ dies in $G$, $\gamma$
  factors through a reduced disk diagram $f:Y\to X$ by van Kampen's
  lemma, in such a way that $\gamma$ surjects the boundary $\partial
  Y$. By Theorem \ref{thm: Magnus Lyndon} the restriction of $f$ to
  $\partial Y$ surjects $w(S)$, and hence $\gamma$ does too. Since
  $\gamma\in\HH$, it follows that $w$ is conjugate into $H$.
\end{proof}

\begin{corollary}[Lyndon asphericity]\label{cor: Lyndon asphericity}
Let $X$ be the presentation complex of a one-relator group
$G=F/\ncl{w}$.  If $Y$ is homeomorphic to a 2-sphere, no combinatorial
map $Y\to X$ is reduced.
\end{corollary}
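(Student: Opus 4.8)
The plan is to deduce Corollary~\ref{cor: Lyndon asphericity} from Theorem~\ref{thm: Magnus Lyndon} by a one-line contradiction argument, in exactly the same spirit as the deduction of the Freiheitssatz (Corollary~\ref{cor: Freiheitssatz}). So suppose, for contradiction, that $f\colon Y\to X$ is a combinatorial map with $Y$ homeomorphic to the $2$-sphere and $f$ reduced, i.e.\ $q\circ f$ is a branched map.

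First I would check that such an $f$ qualifies as a reduced singular surface diagram in the sense of the definition preceding Theorem~\ref{thm: Magnus Lyndon}. Since $Y$ is a closed surface, the link of every vertex of $Y$ is a circle, hence in particular a union of points, circles and intervals; and a combinatorial map $Y\to X$ is in particular cellular and branched over two-cells (with every degree of branching equal to $1$). The hypothesis that $f$ is reduced is precisely the requirement that $q\circ f$ be a branched map, so $f$ is a reduced singular surface diagram and Theorem~\ref{thm: Magnus Lyndon} may be applied to it.

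Next, $\chi(Y)=\chi(S^2)=2\geq 1$, so the theorem gives $w(S)\subseteq f(\partial Y)$. But $Y$, being a closed combinatorial surface, has no free faces: every $1$-cell of $Y$ is incident to exactly two corners of $2$-cells. Hence $\partial Y$, the closure of the set of free faces of $Y$, is empty, and so $f(\partial Y)=\varnothing$. On the other hand $w\colon S\to\ambient$ is the attaching map of the (unique) $2$-cell of $X$, so $S$ is a non-empty circle and $w(S)$ contains at least one vertex of $\ambient$; in particular $w(S)\neq\varnothing$. This contradicts $w(S)\subseteq f(\partial Y)=\varnothing$, so no reduced combinatorial map $Y\to X$ exists, which is the assertion of the corollary.

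I do not anticipate any real obstacle: the entire content lies in Theorem~\ref{thm: Magnus Lyndon} (and, behind it, Corollary~\ref{monotonicity} and the dependence theorem). The only points that need a moment's care are the bookkeeping ones -- confirming that an arbitrary combinatorial structure on $S^2$ satisfies the link condition in the definition of a singular surface diagram, and that $\partial Y=\varnothing$ for a closed combinatorial surface -- and both are immediate from the relevant definitions.
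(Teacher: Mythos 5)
Your proposal is correct and is exactly the paper's argument: the corollary is deduced immediately from Theorem~\ref{thm: Magnus Lyndon}, since a sphere has $\chi(Y)=2\geq 1$ but empty boundary, so $w(S)\subseteq f(\partial Y)=\varnothing$ is impossible. The bookkeeping checks you carry out (the link condition and $\partial Y=\varnothing$ for a closed surface) are the same ones the paper leaves implicit when it says the corollary ``follows immediately.''
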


\subsection{Roots of products of commutators}

\begin{definition}
  Let ${\FF}$ be a free group. The \emph{genus} or \emph{commutator
    length} of an element $v\in{\FF}$ is defined to be the minimal
  $g\in\NN$ such that
  \[
    v=[a_1,b_1]\dotsb[a_g,b_g]~.
  \]
\end{definition}

The Duncan--Howie theorem is an estimate on the commutator length of a
proper power $v=w^n$: it asserts that $n\leq 2g-1$
\cite{duncan-howie}. Here, we view it as a dependence theorem about
maps $\HH\to\FF$ where $\HH$ is the fundamental group of a surface
$\Sigma$ with boundary, and $\partial\Sigma$ maps to powers of
conjugates of $w$.  In this section, we prove another corollary of
Theorem \ref{maintheorem}, which strengthens the Duncan--Howie
theorem.

\begin{corollary}
  \label{dhcorollary}
  Let ${\FF}$ be a free group and consider $v$ a non-trivial element
  which is both a $k$--th power and a product of $g$ commutators, that
  is there are $a_i,b_i,w\in{\FF}$ with $1\leq i\leq g$ and
  \[
    v=[a_1,b_1]\dotsb[a_g,b_g]=w^k~.
  \]
  Then
  \[
    \rk{\langle a_1,\ldots,a_g,b_1,\ldots,b_g,w\rangle} +k-1\leq 2g~.
  \]
\end{corollary}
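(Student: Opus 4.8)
The plan is to encode the surface relation $v=[a_1,b_1]\cdots[a_g,b_g]$ geometrically as a map $Y\to X$, where $X$ is the presentation complex of the one-relator group $G=F/\ncl{w}$ with $w$ the given root, and then apply Corollary \ref{monotonicity} (the one-relator pushout inequality). First I would let $\Sigma$ be a compact orientable surface of genus $g$ with one boundary component, so $\pi_1(\Sigma)$ is free of rank $2g$ and $\partial\Sigma$ traces out the word $[a_1,b_1]\cdots[a_g,b_g]$. The equation $v=w^k$ means this boundary word is conjugate in $F$ to $w^k$; realizing $F=\pi_1(\Omega)$ on a rose $\Omega$, I would build a branched map $f\colon Y\to X$ where $Y$ is $\Sigma$ with its single 2-cell attached to cancel the boundary — more precisely, $Y=\Sigma\cup_{\partial\Sigma}D$ where the disk $D$ maps to the 2-cell of $X$ by the $k$-fold branched cover $z\mapsto z^k$ (here $w$ is taken indivisible; if $w$ itself were a proper power one first passes to the indivisible root, adjusting $k$). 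Since $\Sigma$ closed up by a disk is a closed genus-$g$ surface, $\chi(Y)=\chi(\Sigma_g)=2-2g$, but I want to keep $\Sigma$ with its puncture filled by a 2-cell so that $Y$ has one 2-cell and $\chi(Y)=\chi(\Sigma)+1=(1-2g)+1=2-2g$; the degree of branching of that single 2-cell is $n_e=k$, so $\sum(n_e-1)=k-1$.

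Next I would check the hypotheses of Corollary \ref{monotonicity}. The complex $Y$ is compact and connected; after a homotopy I may assume the attaching map of the 2-cell is an immersion and that $f$ is a branched map in the sense of Definition \ref{defn: Branched map}. The crucial point is the boundary condition: $Y=\Sigma_g$ is a closed surface, so it has \emph{no} free faces, $\partial Y=\varnothing$, and the map $f|_{\partial Y}$ is trivially not at-least-two-to-one onto $w(S)$ (it is the empty map). Thus Corollary \ref{monotonicity} applies and yields
\[
  \chi(Y)+\sum(n_e-1)\leq\chi(\hatt Y)~,
\]
that is $(2-2g)+(k-1)\leq\chi(\hatt Y)$.

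It remains to bound $\chi(\hatt Y)$ from above. The one-relator pushout $\hatt Y={\und}\cup_w D$ has one 2-cell, so $\chi(\hatt Y)=\chi({\und})+1=1-\rk{\pi_1({\und})}$. By construction (see the discussion preceding Corollary \ref{monotonicity}), ${\und}$ is the pushout of $Y^{(1)}$ and the circle $S$ along $\pb$, and the natural map $\pi_1({\und})\to F$ has image exactly $L:=\langle a_1,\dots,a_g,b_1,\dots,b_g,w\rangle$, since $Y^{(1)}$ carries the $a_i,b_i$ and $S$ carries $w$; hence $\rk{\pi_1({\und})}\geq\rk{L}$ and so $\chi(\hatt Y)\leq 1-\rk{L}$. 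Combining, $(2-2g)+(k-1)\leq 1-\rk{L}$, i.e.\ $\rk{L}+k-1\leq 2g$, which is the claim.

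The main obstacle I anticipate is the bookkeeping around the 2-cell and the branching when $w$ is itself a proper power: Corollary \ref{monotonicity} requires $w$ to be indivisible (not a proper power), so one must first replace $w$ by its indivisible root $w_0$, write $v=w_0^{k'}$ with $k'=k\cdot(\text{root exponent of }w)\geq k$, prove the sharper inequality $\rk{L'}+k'-1\leq 2g$ for $L'=\langle a_i,b_i,w_0\rangle$, and then deduce the stated inequality since $L=L'$ (as $w\in\langle w_0\rangle$) and $k'\geq k$. The other point needing care is verifying that the pushout ${\und}$ really does have $\pi_1$ surjecting onto $L$ with the two-cell mapping homeomorphically — this is exactly the maximality of $\hatt Y$ from Lemma \ref{lem: One-relator pushout} together with the identification of $Z^{(1)}$ in the diagram preceding Corollary \ref{monotonicity}, so it should be routine; the Euler-characteristic arithmetic is then immediate.
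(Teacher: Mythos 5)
Your overall strategy is the paper's: encode the surface relation, compare $\chi$ of the complex with $\chi$ of its one-relator pushout via the dependence theorem, and use that $\pi_1$ of the pushout's one-skeleton surjects $L=\langle a_1,\dots,b_g,w\rangle$. But the step you dismiss with ``after a homotopy I may assume \dots $f$ is a branched map'' is exactly where the real content lies, and as written it is a gap. For Corollary \ref{monotonicity} to apply you need a \emph{combinatorial} map in which the attaching path of the $2$-cell of $Y$, read in $\Omega$, is literally the reduced word $w^k$, and in which the link condition of Definition \ref{defn: Branched map} holds, i.e.\ no two corners of the $2$-cell along a common edge of $Y^{(1)}$ map to the same corner of the $2$-cell of $X$ (this is the diagrammatic irreducibility that drives the dependence theorem, and the pushout inequality is simply false without it). Neither is achievable by a homotopy that keeps your cell structure fixed: the paper obtains the first by invoking Culler's theorem to realize $\Sigma$ as the mapping cylinder of $\lambda\colon \pb\to\hgraph$ with immersed boundary (allowing $\hgraph\to\Omega$ to be non-immersed -- this is why Theorem \ref{introthm: main} is not used here), and obtains the second from \emph{orientability} of $\Sigma$ via the M\"obius-band argument. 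Your proposal never uses orientability at all, which is a sign that the decisive verification is missing.

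Two smaller points. First, your proper-power reduction is incorrect as stated: if $w=w_0^m$ in $\FF$ with $w_0\notin L$, then $L\le L'=\langle a_i,b_i,w_0\rangle$ but $L\neq L'$ in general, and $\rk{L}\le\rk{L'}$ is not automatic (e.g.\ $L'=\langle a,w_0\rangle$ contains $L=\langle a,\,w_0aw_0^{-1},\,w_0^{2}\rangle$ of rank $3$), so the stated inequality for $(w_0,k')$ does not yield it for $(w,k)$. The correct reduction, implicit in the paper, is to extract the maximal root of $w$ \emph{inside} $L$ -- this leaves $L$ unchanged and only increases $k$ -- and to run the whole argument with ambient free group $L$ (a rose for $L$ in place of $\Omega$), so indivisibility is only ever needed within $L$. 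Second, there is an off-by-one slip: $\chi(\hatt{Y})=\chi(\und)+1=2-\rk{\pi_1(\und)}$, not $1-\rk{\pi_1(\und)}$; your displayed combination would give the false bound $\rk{L}+k\le 2g$ (already wrong for $g=k=1$, $v=[a,b]$), whereas the corrected arithmetic gives exactly the claimed $\rk{L}+k-1\le 2g$. Your reading of the boundary hypothesis (the empty map over $w(S)$ is not at least two-to-one, so the inequality applies) is fine.
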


Since $w\neq 1$, the group $\langle
a_1,\ldots,a_g,b_1,\ldots,b_g,w\rangle$ is a non-abelian free group,
and hence has rank at least 2. Therefore, $k\leq 2g-1$, recovering the
Duncan--Howie estimate.

\begin{figure}[ht]
  \centerline{
    \includegraphics[width=.8\textwidth]{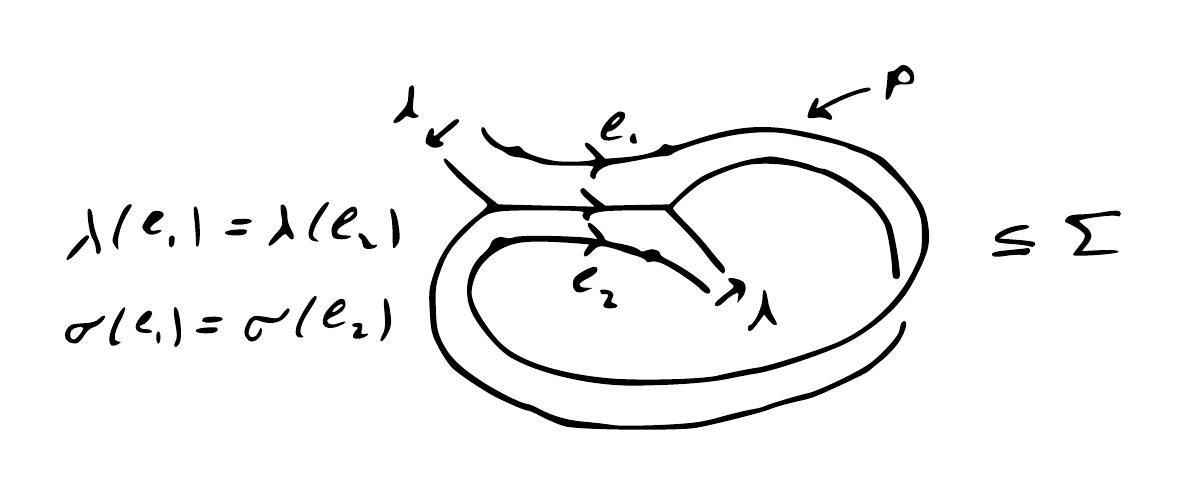}
  }
  \caption{When $\Sigma$ is orientable the map $\lambda\colon {\pb}\to
    \hgraph$ is a branched map since otherwise $\Sigma$ contains
    a M\"obius band.}
  \label{fig: Mobius}
\end{figure}

\begin{proof}[Proof of Corollary~\ref{dhcorollary}.]
  Represent the subgroup $\langle a_i,b_i\rangle\leq{\FF}$ by a map
  $f\colon\Sigma\to {\ambient}$ from an orientable surface of genus
  $g$ with one boundary component, so that $f\vert_{\partial\Sigma}$
  represents the element $v$. We may assume that $f$ doesn't pinch any
  simply closed curves, and that $w$ is indivisible in $\langle
  a_1,\ldots,a_g,b_1,\ldots,b_g,w\rangle$. By \cite{culler-surfaces},
  we may realize $\Sigma$ as the mapping cylinder of $\lambda\colon
  {\pb}\to \hgraph$, where ${\pb}$ is a cycle representing the
  boundary of $\Sigma$, with a morphism of graphs $h\colon \hgraph\to
  {\ambient}$ representing $\langle
  a_1,\ldots,a_g,b_1,\ldots,b_g\rangle$. Orientability of $\Sigma$
  implies that $\lambda$ is a branched map. See Figure~\ref{fig:
    Mobius}.  The induced map from the pushout ${\und}$ surjects
  $\langle a_1,\ldots,a_g,b_1,\ldots,b_g,w\rangle$, and the inequality
  then follows from the dependence theorem.
\end{proof}

\section{Subgroups of one-relator groups}

The results of this section show how $\pi(w)$ controls the subgroup
structure of the one-relator group $\GG=\FF/\ncl{w}$.

\subsection{Primitivity rank and $w$--subgroups}

\label{subsection: primitivity rank and w subgroups}

Recall the definition of the primitivity rank $\pi(w)$ from the
introduction (Definition \ref{defn: Primitivity rank}).  We start with
a few simple observations.

\begin{enumerate}[(i)]
\item The word $w$ is primitive in ${\FF}$ if and only if
  $\pi(w)=\infty$.
\item Unless $w$ is primitive, $\rk{{\FF}}$ is an upper bound for
  $\pi(w)$.
\item The word $w$ is a proper power if and only if $\pi(w)=1$.
\item If $w$ is contained in a subgroup $H\leq \FF$ and $w$ is
  primitive in $\FF$ then $w$ is primitive in $H$. Therefore, the
  primitivity rank of $w$ when viewed as an element of $H$ is at least
  the primitivity rank of $w$ when viewed as an element of $\FF$.
\end{enumerate}

We now turn to the second definition needed for the main lemma.

\begin{definition}\label{defn: w-subgroup}
  Let ${\FF}$ be a free group and $w\in{\FF}$ a non-trivial element. A
  subgroup $\KK$ of ${\FF}$ is a \emph{$w$--subgroup} if:
  \begin{enumerate}[(i)]
  \item $\KK$ contains $w$ as an imprimitive element;
  \item $\rk{\KK}=\pi(w)$; and
  \item every proper overgroup $\KK'$ of $\KK$ in ${\FF}$ has
    $\rk{\KK'}>\rk{\KK}$.
  \end{enumerate}
\end{definition}

In the easiest case $w$--subgroups are cyclic; this occurs if and only
if $\pi(w)=1$, i.e.\ when $w$ is a proper power $u^k$.

\begin{example}\label{ex: w-subgroups of powers}
  If $w=u^k\in{\FF}$ with $k>1$ and $u$ not a proper power then
  $\langle u\rangle$ is the unique $w$--subgroup of ${\FF}$.  It is
  well-known that the inclusion $\langle u\rangle/\ncl{w}\to
  {\FF}/\ncl{w}$ is injective \cite[Proposition
    II.5.17]{lyndon-schupp}.
\end{example}

So when $\pi(w)=1$, a $w$--subgroup is unique and malnormal. In fact,
malnormality holds in general.

\begin{lemma}\label{lem: w-subgroups of free groups are malnormal}
  If $\KK\leq{\FF}$ is a $w$--subgroup then $\KK$ is malnormal.  In
  particular, if $w^g\in\KK$ then $g\in\KK$.
\end{lemma}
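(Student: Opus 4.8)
The plan is to argue by contradiction. Suppose $\KK$ is a $w$--subgroup that is \emph{not} malnormal, so there is some $g\in\FF\setminus\KK$ with $\KK\cap g\KK g^{-1}$ non-trivial. Write $\KK'=\langle\KK, g\KK g^{-1}\rangle$ — or more usefully, consider the subgroup generated by $\KK$ together with a single element witnessing the failure of malnormality. The key point is that $\KK$ still contains $w$ as an imprimitive element inside any overgroup, so by minimality of $\rk\KK=\pi(w)$, any proper overgroup must have strictly larger rank; I want to derive a contradiction by producing an overgroup of rank $\leq\pi(w)$ that properly contains $\KK$, contradicting condition~(iii) (or producing one containing $w$ imprimitively of rank $<\pi(w)$, contradicting the definition of $\pi(w)$).

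First I would set up the standard picture: represent $\KK$ by its Stallings core graph $\Delta_\KK\immerses\ambient$, an immersion of a connected graph of rank $\pi(w)$. Non-malnormality means there is a non-trivial element $u$ lying in both $\KK$ and $g\KK g^{-1}$; equivalently, the fibre product $\Delta_\KK\times_\ambient\Delta_\KK$ has a component carrying non-trivial $\pi_1$ other than the diagonal. I would then form the subgroup $\KK''=\langle \KK, gug^{-1}\rangle$ wait — more precisely, the subgroup $\langle\KK,\, g\,\rangle$ is too big; instead use the graph-theoretic operation of identifying the two $u$-loops: glue $\Delta_\KK$ to a copy of itself along the immersed loop representing $u$ and fold. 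The resulting folded graph $\Delta'$ represents an overgroup $\KK'$ of $\KK$ with $\rk{\KK'}\leq 2\,\pi(w) - \rk{\langle u\rangle} = 2\pi(w)-1$, which is not yet good enough — so the real work is to use the fact that $\KK$ is a $w$--subgroup, not merely a minimal-rank imprimitivity witness.

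Here is where the defining property~(iii) does the heavy lifting. Because $\KK$ satisfies~(iii), $\KK$ is a \emph{maximal} subgroup of its rank: every proper overgroup has strictly bigger rank. I would argue that if $\KK$ fails to be malnormal, one can find a proper overgroup $\KK'$ with $\rk{\KK'}=\rk{\KK}=\pi(w)$, directly contradicting~(iii). The construction: take the witness $g$ with $N := \KK\cap g\KK g^{-1}\neq 1$; consider $\KK' = \langle \KK, g N g^{-1}\rangle$ — no. The cleanest route is: let $N=\KK\cap g\KK g^{-1}$, pick a generating picture, and observe that since $N$ and $g^{-1}Ng$ are both non-trivial subgroups of $\KK$, one can run a fold that builds a graph of rank equal to $\rk\KK$. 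Concretely, in the core graph $\Delta_\KK$, the element $g$ conjugating a loop $\gamma_1$ (representing a generator of $N$) to a loop $\gamma_2$ (representing the conjugate) gives a path; attaching an arc realizing $g$ from the basepoint of $\gamma_1$ to the basepoint of $\gamma_2$ and folding produces a connected graph $\Delta'$ with one extra edge and (after folding along at least the shared loop $\gamma_1\sim\gamma_2$) no net increase in rank: $\chi(\Delta') \geq \chi(\Delta_\KK)$, i.e. $\rk{\KK'}\leq\rk\KK$. Since $g\notin\KK$, the inclusion $\KK<\KK'$ is proper, and since $\KK$ is a $w$--subgroup we cannot have $\rk{\KK'}\leq\rk\KK$ with $\KK\subsetneq\KK'$ unless in fact $\rk{\KK'}>\rk\KK$ — contradiction. (If it turns out $\rk{\KK'}=\rk\KK$ strictly is impossible to force and only $\leq$ is available, then $\rk{\KK'}<\rk\KK$ is even worse, since $\KK'$ still contains $w$ imprimitively, contradicting minimality of $\pi(w)$ in the definition of primitivity rank.)

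The main obstacle I anticipate is the bookkeeping in the folding step: ensuring that the arc-and-fold construction genuinely does not raise the rank, i.e. that folding consumes at least as much Euler characteristic as the extra arc adds. This requires that the overlap $N=\KK\cap g\KK g^{-1}$ be \emph{infinite cyclic or larger} — which is automatic, since it is a non-trivial subgroup of a free group hence free of rank $\geq 1$ — and that the two immersed loops representing a generator of $N$ and of $g^{-1}Ng$ in $\Delta_\KK$ really do get folded together when the arc labelled $g$ is added and the map to $\ambient$ is folded to an immersion. One must be slightly careful that the fold doesn't accidentally also identify $g$ into $\KK$ (which would make $\KK'=\KK$ and kill the contradiction) — but if that happened, it would say $g\in\KK$, contrary to assumption. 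Finally, the "in particular" clause (if $w^h\in\KK$ then $h\in\KK$) is immediate: malnormality of $\KK$ together with $w\in\KK$, $w^h\in\KK$ forces $w^h=(w^h)$ to lie in $\KK\cap h^{-1}\KK h$, which is non-trivial (as $w$ has infinite order in the free group $\FF$), hence $h\in\KK$.
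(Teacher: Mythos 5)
Your overall skeleton matches the paper's: look at the overgroup $\langle \KK,g\rangle$, show its rank is at most $\rk\KK$, and then invoke condition (iii) of Definition \ref{defn: w-subgroup} (together with the definition of $\pi(w)$ as a fallback). But the one step that carries all the content --- that $\rk{\langle\KK,g\rangle}\leq\rk{\KK}$ whenever $\KK\cap g\KK g^{-1}\neq 1$ --- is not actually proved in your write-up, and the mechanism you offer for it does not work as stated. Attaching an arc realizing $g$ to the core graph $\Delta_\KK$ and Stallings-folding computes the correct rank of $\langle\KK,g\rangle$, but nothing in the folding process forces the two loops $\gamma_1$ and $\gamma_2$ to be identified, and there is no a priori reason the folds should ``consume'' the Euler characteristic added by the new arc; you flag exactly this as the anticipated obstacle and then leave it unresolved. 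The inequality is an algebraic fact, not a visible feature of a fold: the point is that $g^{-1}k_1g=k_2$ with $k_1,k_2\in\KK\setminus 1$ is a non-trivial reduced relation among a basis of $\KK$ together with $g$, so $\langle\KK,g\rangle$ is a free group generated by $\rk\KK+1$ elements that do not form a basis; since free groups are Hopfian (equivalently, any generating set of a free group whose cardinality equals the rank is a basis), this forces $\rk{\langle\KK,g\rangle}\leq\rk\KK$. This is exactly how the paper argues, in one line, with no graphs at all. Once you have that inequality, you also do not need a contradiction or the minimality of $\pi(w)$: condition (iii) says every \emph{proper} overgroup has strictly larger rank, so $\langle\KK,g\rangle=\KK$ and hence $g\in\KK$, which is precisely malnormality.

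Two smaller points. Your hedge ``$\rk{\KK'}<\rk\KK$ is even worse, since $\KK'$ still contains $w$ imprimitively'' silently uses the fact that imprimitivity passes to overgroups (equivalently, if $w$ is primitive in $\KK'$ then it is primitive in every subgroup of $\KK'$ containing $w$); this is true and is the same standard fact the paper invokes in Definition \ref{defn: Primitivity rank}, but it deserves a sentence --- and in any case it is not needed once (iii) is applied directly. Your deduction of the ``in particular'' clause from malnormality is correct.
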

\begin{proof}
  Let $g\in{\FF}$; then $\KK\leq\langle K,g\rangle$ and $\rk{\langle
    K,g\rangle}\leq\rk{\KK}+1$.  If $k_1^g=k_2$ for
  $k_1,k_2\in\KK\smallsetminus 1$ then there is a non-trivial relation
  between $\KK$ and $g$ and so, since free groups are Hopfian,
  $\rk{\langle K,g\rangle}\leq\rk{\KK}$. Therefore, by the definition
  of a $w$--subgroup, $\langle K,g\rangle=\KK$, so $g\in\KK$.
\end{proof}

Uniqueness in the case $\pi(w)=1$ extends to finiteness in general,
and the finite list of $w$--subgroups is computable. Here we deduce
computability from Whitehead's algorithm
\cite{whitehead_equivalent_1936}; an alternative algorithm is
described in \cite[Appendix A]{puder_primitive_2014}.

\begin{lemma}\label{lem: Finitely many w-subgroups}
  There are only finitely many $w$--subgroups in a free group
  ${\FF}$, and there is an algorithm that lists them.
\end{lemma}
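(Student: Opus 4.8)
The plan is to identify the $w$--subgroups with certain \emph{algebraic extensions} of the cyclic subgroup $\langle w\rangle$, and then to exploit the fact that a finitely generated subgroup of a free group has only finitely many algebraic extensions, all of which are effectively computable via Stallings graphs (cf.\ \cite{stallings-folding,puder_measure_2015}).

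The first step is to prove that every $w$--subgroup $\KK$ is algebraic over $\langle w\rangle$, meaning $\langle w\rangle$ lies in no proper free factor of $\KK$. Indeed, if $\langle w\rangle\leq M\leq \KK$ with $M$ a proper free factor, then writing $\KK=M*M'$ with $M'\neq 1$ gives $\rk M<\rk\KK=\pi(w)$; were $w$ imprimitive in $M$ this would contradict the minimality in Definition~\ref{defn: Primitivity rank}, so $w$ is part of a basis of $M$, hence of $\KK$, contradicting condition (i) of Definition~\ref{defn: w-subgroup}. The same argument shows that a subgroup of \emph{minimal} rank containing $w$ imprimitively is algebraic over $\langle w\rangle$, so $\pi(w)$ equals the minimum of $\rk\KK'$ over the algebraic extensions $\KK'$ of $\langle w\rangle$ in which $w$ is imprimitive, with $\pi(w)=\infty$ if there are none. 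We may also assume $\FF$ is finitely generated: by the Kurosh subgroup theorem a subgroup algebraic over $\langle w\rangle$ is contained in the smallest free factor $\FF_0$ of $\FF$ containing $w$ -- otherwise its intersection with $\FF_0$ would be a proper free factor of it containing $w$ -- and $\FF_0$ has finite rank.

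Now I would invoke Takahasi's theorem: the finitely generated subgroup $\langle w\rangle$ has a finite set of algebraic extensions in $\FF$, computable as a collection of quotients of the Stallings graph of $\langle w\rangle$. Since every $w$--subgroup occurs on this list, finiteness follows; it remains to decide which members of the list satisfy (i), (ii) and (iii) of Definition~\ref{defn: w-subgroup}. Condition (i), imprimitivity of $w$ in $\KK$, is decided by Whitehead's algorithm \cite{lyndon-schupp}. Condition (ii) is then a comparison of $\rk\KK$ with $\pi(w)$, the latter computed as above from the same finite list. For condition (iii) -- that $\KK$ has no proper overgroup of rank $\leq\rk\KK$ -- I would apply Takahasi's theorem a second time, to $\KK$: every overgroup of $\KK$ contains some algebraic extension of $\KK$ as a free factor, so $\KK$ fails (iii) precisely when some algebraic extension of $\KK$ distinct from $\KK$ has rank $\leq\rk\KK$ (if that extension were $\KK$ itself, a proper overgroup containing it as a free factor would have strictly larger rank, a contradiction). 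As the algebraic extensions of $\KK$ form another finite, effectively computable set, condition (iii) is decidable, and running the three tests over the list of algebraic extensions of $\langle w\rangle$ outputs exactly the $w$--subgroups.

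The step I expect to be the main obstacle is the decidability of condition (iii): naively it is a search over infinitely many overgroups, and the real content lies in the reformulation above, which trades it for the finite computation of the algebraic extensions of $\KK$. A secondary point to get right is the apparent circularity of using $\pi(w)$ inside the algorithm, which dissolves once one notes that $\pi(w)$ is itself determined by the finite list of algebraic extensions of $\langle w\rangle$.
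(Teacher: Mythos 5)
Your proposal is correct and is essentially the paper's argument: both restrict attention to subgroups in which $w$ does not lie in a proper free factor (your algebraic extensions of $\langle w\rangle$), obtain finiteness and computability from the fact that the corresponding Stallings graphs are quotients of the circle carrying $w$, and use Whitehead's algorithm to decide imprimitivity. The only real difference is the last step: the paper extracts the $w$--subgroups as the maximal elements under inclusion (checked by factoring based immersions) among the minimal-rank imprimitive candidates, whereas you verify condition (iii) directly by a second application of Takahasi's theorem to each candidate $\KK$ --- both amount to a finite check on the same list.
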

\begin{proof}
  If ${\FF}$ is the fundamental group of a based finite graph
  ${\ambient}$, then any finitely generated subgroup $\KK$ can be
  realized by a based immersion of finite core graphs
  $\Lambda\immerses {\ambient}$, and if $w$ is contained in $\KK$ then
  the immersion $w\colon S\to {\ambient}$ lifts to $\Lambda$.  We only
  need to consider subgroups $\KK$ for which $w$ is not contained in a
  proper free factor, and for such subgroups $\KK$, every edge of
  $\Lambda$ is in the image of $w$. Unless $w$ is primitive in $\KK$,
  every edge of $\Lambda$ is hit at least twice by $w$, so we only
  need to consider the finitely many based immersions
  $\Lambda\immerses {\ambient}$ with $\vert \Lambda\vert\leq\vert
  w\vert/2$. For each such $\Lambda\immerses {\ambient}$, Whitehead's
  algorithm decides whether or not $w$ is primitive in $\KK$. Keep
  those $\Lambda$ of minimal rank, and of these the $w$--subgroups are
  the maximal ones with respect to inclusion: $\KK\leq\KK'$ if and
  only if the based immersion $\Lambda\to {\ambient}$ factors through
  the based immersion $\Lambda'\to {\ambient}$, which can be checked
  trivially.
\end{proof}

If we realize ${\FF}$ as the fundamental group of a core graph
${\ambient}$ and $w$ by an immersion $w\colon S\to {\ambient}$ then
each of the finitely many $w$--subgroups $\KK_i$ is realized by an
immersion of core graphs $\Lambda_i\immerses {\ambient}$.  We may then
define complexes $Q_i=\Lambda_i\cup_{w} D$ (where $w$ is the unique,
by Lemma~\ref{lem: w-subgroups of free groups are malnormal}, lift of
$w$ to $\Lambda_i$), which come equipped with immersions $Q_i\immerses
X$. These play a key role in the classification of immersions
$Y\immerses X$ with $\chi(Y)=2-\pi(w)$.

\begin{definition}\label{defn: w-subgroups of O}
  If $\KK_i\leq{\FF}$ is a $w$--subgroup we also call
  $P_i=\KK_i/\ncl{w}$ a \emph{$w$--subgroup} of $\GG={\FF}/\ncl{w}$.
\end{definition}

The $w$--subgroups come equipped with homomorphisms $P_i\to \GG$
induced by the immersions $Q_i\immerses X$.  The name `$w$--subgroup'
turns out to be justified, since by Theorem \ref{thm: Q is a subgroup}
these homomorphisms are injective.

\begin{remark}\label{rem: w-subgroups don't split freely}
  Whenever a one-relator group $\FF/\ncl{w}$ splits freely, the word
  $w$ is conjugate into a free factor of $\FF$ \cite[Proposition
    II.5.13]{lyndon-schupp}. In particular, every $w$-subgroup
  $P_i=\KK_i/\ncl{w}$ is at most one-ended, since $w$ is not contained
  in a proper free factor of $\KK_i$.  (Note that $\ZZ$ is a HNN
  extension of the trivial group, and in particular splits freely.)
\end{remark}

\subsection{Nielsen reduction}

\label{subsection: nielsen equivalence}

This section introduces the strong version of homotopy equivalence
that plays a role in our main results.

\begin{definition}\label{defn: Nielsen reduction}
 Let $X,X'$ be combinatorial 2-complexes.  A \emph{Nielsen
   equivalence} between $X$ and $X'$ is a homotopy equivalence
 $f:G_X\to G_{X'}$ and a homeomorphism $s:S_X\to S_{X'}$ such that
 $f\circ w_X\simeq w_{X'}\circ s$. (Here, we use the notation of
 Definition \ref{def: Combinatorial complex}.)  In this case, we say
 that $X$ and $X'$ are \emph{Nielsen equivalent}.
  
Let $Y$ be another 2-complex. We say that $X$ \emph{Nielsen reduces to
  $Y$} if $X$ is Nielsen equivalent to a wedge $Y\vee \bigvee_{i}
D^2_i$, where the $D^2_i$ are 2-discs with the standard cellular
structure.
\end{definition}

Complexes that Nielsen reduce to graphs can also be characterized
algebraically. The following theorem is an easy consequence of the
fact that any pair of bases of a free group are related by Nielsen
moves \cite[Proposition I.4.1]{lyndon-schupp}.
\begin{proposition}
  \label{prop: reducible iff basis}
  A two-complex $Y$ Nielsen reduces to a graph if and only if the
  conjugacy classes represented by the attaching maps for the
  two-cells of $ Y$ have representatives which are a sub-basis of the
  free group $\pio{Y^{(1)}}$.
\end{proposition}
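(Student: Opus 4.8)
The plan is to argue both implications directly from the definition of Nielsen reduction, using the cited fact that any two bases of a free group are related by a finite sequence of elementary Nielsen transformations. First I would set up the dictionary: if $Y$ is a two-complex with two-cells attached along conjugacy classes $[u_1],\dotsc,[u_r]$, then the data of $Y$ up to homotopy is recorded by the underlying graph $Y^{(1)}$ together with this collection of conjugacy classes in $\pio{Y^{(1)}}$. A spanning tree of $Y^{(1)}$ identifies $\pio{Y^{(1)}}$ with a free group $\FF$ on the set of non-tree edges, and I would track how each of the two elementary operations in Definition \ref{defn: Nielsen reduction} acts on this picture.

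For the ``if'' direction, suppose the $[u_i]$ have representatives forming a sub-basis of $\FF=\pio{Y^{(1)}}$; extend to a basis $x_1,\dotsc,x_n$ with $x_i$ a cyclic conjugate of $u_i$ for $i\le r$. The goal is to reduce $Y$ to the wedge of $r$ circles with a disc attached along each. I would realize the change of basis from the ``graph-theoretic'' basis (non-tree edges) to $x_1,\dotsc,x_n$ by a sequence of elementary Nielsen transformations, and observe that each elementary transformation of the basis of $\pio{Y^{(1)}}$ can be implemented at the level of the complex by an edge collapse or expansion (sliding one edge along another, subdividing, folding), which is one of the permitted moves; this does not disturb the two-cells beyond cyclically permuting their attaching words, which is a homotopy of the attaching map. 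Once the one-skeleton is a rose with petals $x_1,\dotsc,x_n$ and the $i$th two-cell is attached along (a cyclic conjugate of) $x_i$, each petal $x_j$ with $j>r$ is a free face, so $r$ face collapses remove the cells $g_{r+1},\dotsc,g_n$ along with the edges $x_{r+1},\dotsc,x_n$ — wait, more carefully: the petals $x_j$ for $j > r$ carry no two-cell, so they are simply free edges and can be collapsed; what remains is the wedge of $r$ circles with one disc on each, which is (homotopy equivalent to, and in fact reduces to) a graph after collapsing each such disc onto its boundary via an edge/face collapse. Hence $Y$ Nielsen reduces to a graph.

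For the ``only if'' direction, suppose $Y\niel Z$ with $Z$ a graph. Since $\niel$ is generated by edge collapses (in either direction) and face collapses, it suffices to check that the property ``the attaching words of the two-cells have representatives that are a sub-basis of $\pi_1$ of the one-skeleton'' is preserved under each generating move, in both directions, and then note that a graph vacuously has this property (there are no two-cells). An edge collapse or expansion $Y\leftrightarrow Z$ induces an isomorphism $\pio{Y^{(1)}}\cong\pio{Z^{(1)}}$ under which the attaching conjugacy classes correspond, so the sub-basis property transfers verbatim. A face collapse $Z\into Y$ deletes a two-cell $g_i$ together with an edge $e_i$ that only $g_i$ traverses, and exactly once; here I would use that $\pio{Z^{(1)}}$ is the quotient (really the free factor) of $\pio{Y^{(1)}}$ obtained by killing the generator $e_i$, and that the remaining attaching words, having been a sub-basis of $\pio{Y^{(1)}}$ that included a representative using $e_i$ exactly once, restrict to a sub-basis of $\pio{Z^{(1)}}$ — this is where the cited Nielsen-moves fact on bases is invoked, to pass between the two free groups while controlling the basis. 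Running this backwards along the sequence from $Z$ to $Y$ gives the claim.

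The main obstacle is the ``only if'' direction, specifically handling the face collapse move: one must verify carefully that removing a two-cell whose attaching word uses a specific edge exactly once (and is used by nothing else) leaves the other attaching words as a sub-basis of the smaller free group. The subtlety is that a sub-basis of $\FF$ need not remain a sub-basis after killing an arbitrary basis element, so one really needs the precise hypothesis that $e_i$ appears exactly once in $u_i$ and in no other $u_j$, together with the freedom to change basis by Nielsen moves, to conclude. Getting this bookkeeping right — essentially a relative version of the statement that a subset of a basis extending to a basis stays a sub-basis modulo a suitable free factor — is the crux; everything else is routine manipulation of spanning trees and elementary moves.
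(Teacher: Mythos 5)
Your ``only if'' direction is essentially sound, and in fact the step you flag as the crux is the easy one. Along a chain $Y\niel\dotsb\niel Z$ with $Z$ a graph you only need to propagate the sub-basis property \emph{backwards}, and for a face collapse $Z\into Y$ this is immediate: $\pio{Y^{(1)}}\cong\pio{Z^{(1)}}*F(t_1,\dotsc,t_m)$, where $t_i$ corresponds to the deleted edge $e_i$, the class of the deleted $2$-cell $g_i$ is conjugate to $t_ia_i$ with $a_i\in\pio{Z^{(1)}}$, and extending a sub-basis of $\pio{Z^{(1)}}$ by the elements $t_ia_i$ gives a sub-basis of $\pio{Y^{(1)}}$. (The forward implication you agonize over -- that a sub-basis lying in the free factor $\pio{Z^{(1)}}$ is a sub-basis of that factor -- is true, but it is not the direction the argument needs.) Edge collapses and expansions are homotopy equivalences matching attaching classes, and a graph satisfies the condition vacuously, so that half goes through.

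The genuine gap is in the ``if'' direction, at the assertion that after realizing the change of basis by edge collapses and expansions ``the $i$th two-cell is attached along (a cyclic conjugate of) $x_i$''. Carrying the $2$-cells through these moves controls only the \emph{conjugacy class} of each attaching map: the attaching loop itself is rewritten by substitution, so it will in general be an unreduced, conjugated representative (a move realizing $a\mapsto ab$ turns a reduced word into one containing $\bar b b$), and none of the permitted moves lets you homotope an attaching map to tighten it -- your claim that an elementary transformation ``does not disturb the two-cells beyond cyclically permuting their attaching words'' is false. Consequently, at the end of your sequence there need be no edge traversed exactly once by exactly one $2$-cell, hence no free face, and the concluding face collapses need not exist. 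Concretely: take the rose on one petal $x$ with a single $2$-cell attached along the loop $x\,x\,\bar x\,\bar x\,x$; its class is $[x]$, a basis, but no face collapse or edge collapse is available, and one must blow up the vertex and collapse repeatedly before a free face appears. (Even if $Y$ itself has immersed attaching maps, the intermediate complexes produced by your own moves will not, so the issue cannot be defined away.) The missing content is precisely an argument that manufactures free faces, e.g.\ an induction on the total combinatorial length of the attaching maps in which a blow-up followed by an edge collapse realizes a Nielsen/Whitehead move that strictly shortens them -- possible exactly because the classes form a sub-basis, so the length exceeds its minimum unless each $2$-cell runs over a single petal exactly once -- after which a face collapse applies and one inducts. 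Two smaller slips in the same passage: the unused petals $x_j$, $j>r$, are loops and cannot be edge-collapsed (nor need they be), and ``collapsing a disc onto its boundary'' is not one of the moves; the correct last step is the face collapse deleting $g_i$ together with its petal $x_i$.
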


We will make use of the following technical fact about Nielsen
reduction. It is an immediate consequence of Proposition \ref{prop:
  reducible iff basis}, because the pullback of a sub-basis along an
immersion is a sub-basis.

\begin{lemma}  \label{lem: nielsenreduces}
  Let $U,Y$ be 2-complexes. If $U$ immerses in $Y$ and $Y$ Nielsen
  reduces to a graph, then $U$ Nielsen reduces to a graph.  In
  particular, if $Y$ is one-relator and the attaching map is along a
  primitive element, then $U$ Nielsen reduces to a graph.
\end{lemma}

\subsection{One-relator pushouts and primitivity rank}

We can now classify immersions of finite complexes $Y\immerses X$ when
$\chi(Y)$ is sufficiently large: specifically, when $\chi(Y)\geq
2-\pi(w)$.

\begin{lemma}
  \label{lem: Negative immersions}
  Let $\GG={\FF}/\ncl{w}$ be a one-relator group as above, and $X$ a
  presentation complex of $\GG$, with $w$ represented by an immersion
  $w\colon S\immerses {\ambient}$. Let $Y\immerses X$ be an immersion
  from a compact connected one- or two-complex $Y$ to $X$.  Suppose
  that $\chi(Y)\geq 2-\pi(w)$, that $Y$ has no free faces, and that
  the one-skeleton of $Y$ is a core graph.
\begin{enumerate}[(i)]
\item If $\chi(Y)>2-\pi(w)$ then $Y$ reduces to a graph.
\item If $\chi(Y)=2-\pi(w)$ then either $Y$ reduces to a graph or
  $Y\immerses X$ factors through some $Q_i\immerses X$.
\end{enumerate}
\end{lemma}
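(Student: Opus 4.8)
The plan is to deduce Lemma~\ref{lem: Negative immersions} from the one-relator pushout inequality (Corollary~\ref{monotonicity}) together with the combinatorics of $w$--subgroups. Given $Y\immerses X$ with no free faces, form the one-relator pushout $\hatt Y$ of the branched map $Y\to X$; since $Y$ is an immersion every two-cell maps homeomorphically, so the degrees of branching $n_e$ are all $1$ and Corollary~\ref{monotonicity} gives $\chi(Y)\leq\chi(\hatt Y)$. (Here the hypothesis that $Y$ has no free faces is exactly what forces $f\vert_{\partial Y}$ to fail to be at least two-to-one, via $\partial Y=\partial\real{\adj}$.) So it suffices to understand $\hatt Y$, which is a one-relator complex $\und\cup_{\bar w}D$ whose relator $\bar w$ is, after folding the one-skeleton to an immersion, a lift of $w$ to the core graph $\folded=\hatt Y^{I,(1)}$; note $\chi(\hatt Y)\leq\chi(\hatt Y^I)=\chi(\folded)+1$, and I would work with $\Lambda:=\folded$, which carries a copy of $w$ hitting every edge at least twice (else there is a free face upstairs, contradicting that $Y$ has none, or one can collapse).

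First I would dispose of the case where $w$ is primitive in $\pi_1(\Lambda)$: then by Proposition~\ref{prop: reducible iff basis} the complex $\hatt Y^I$ Nielsen reduces to a graph, and by Lemma~\ref{lem: nielsenreduces} (applied to the immersion $Y\immerses \hatt Y^I$ — more precisely, the immersion obtained by factoring $Y\to X$ through $\hatt Y^I\immerses X$) so does $Y$, giving conclusion~(i)/(ii) in the ``reduces to a graph'' alternative. So assume $w$ is imprimitive in $\pi_1(\Lambda)$. Then $\rk{\pi_1(\Lambda)}\geq\pi(w)$ by the very definition of primitivity rank, so $\chi(\Lambda)=1-\rk{\pi_1(\Lambda)}\leq 1-\pi(w)$, hence
\[
  \chi(Y)\leq\chi(\hatt Y)\leq\chi(\hatt Y^I)=\chi(\Lambda)+1\leq 2-\pi(w)~.
\]
This already proves~(i): if $\chi(Y)>2-\pi(w)$ then $w$ cannot be imprimitive in $\pi_1(\Lambda)$, so the primitive case applies and $Y$ reduces to a graph.

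For~(ii), suppose $\chi(Y)=2-\pi(w)$ and $w$ is imprimitive in $\pi_1(\Lambda)$. Then every inequality above is an equality: $\rk{\pi_1(\Lambda)}=\pi(w)$, and $\chi(\hatt Y)=\chi(\hatt Y^I)$ so the folding $\und\to\Lambda$ is a $\chi$-preserving (hence, for core graphs, a homotopy-equivalence-and-then-some) map, and $Y\immerses X$ factors through $\hatt Y^I=\Lambda\cup_w D$. It remains to show $\Lambda\cup_w D$ is one of the $Q_i$, i.e.\ that $\pi_1(\Lambda)$ is a $w$--subgroup of $\FF$. Conditions (i) and (ii) of Definition~\ref{defn: w-subgroup} hold by construction; for condition (iii), I would argue that if $\pi_1(\Lambda)<\KK'<\FF$ with $\rk{\KK'}=\rk{\pi_1(\Lambda)}$ then, realizing $\KK'$ by a core graph $\Lambda'$ through which $\Lambda\immerses$, the element $w$ is still imprimitive in $\KK'$ (a minimal-rank imprimitive overgroup of minimal rank cannot acquire primitivity — or invoke that $w$ hits every edge of $\Lambda$ twice and a rank-preserving extension of core graphs that keeps $w$ filling must be an isomorphism), so $\pi_1(\Lambda)$ is maximal among rank-$\pi(w)$ subgroups containing $w$ imprimitively, i.e.\ a $w$--subgroup; then $\Lambda\cup_w D$ is $X$-isomorphic to the corresponding $Q_i$ and $Y\immerses X$ factors through $Q_i\immerses X$.

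The main obstacle is the last step — pinning down that $\pi_1(\Lambda)$ is a genuine $w$--subgroup (condition (iii)) rather than merely a rank-$\pi(w)$ subgroup containing $w$ imprimitively, and checking that the factorization $Y\to\hatt Y^I$ really does land in the $Q_i\immerses X$ on the nose (matching the relator and the immersion, not just the one-skeleton up to homotopy). This requires care with core graphs, with the uniqueness of the lift of $w$ (Lemma~\ref{lem: w-subgroups of free groups are malnormal}), and with the equality case of the folding $\und\to\folded$ forcing $\und$ itself to be a core graph so that $\hatt Y$ and $\hatt Y^I$ really do agree. A secondary subtlety is making sure the hypothesis ``the one-skeleton of $Y$ is a core graph'' is used to guarantee that $Y$ has no collapsible free edges polluting the Euler-characteristic bookkeeping, and that ``no free faces'' is genuinely the hypothesis needed to apply Corollary~\ref{monotonicity}.
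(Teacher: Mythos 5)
Your overall strategy is the same as the paper's: apply Corollary~\ref{monotonicity} (no free faces forces the boundary hypothesis) to get $\chi(Y)\leq\chi(\hatt{Y})\leq\chi(\hatt{Y}^I)$, then split according to whether $w$ is primitive in $\pio{\folded}$, using Proposition~\ref{prop: reducible iff basis} and Lemma~\ref{lem: nielsenreduces} in the primitive case. This correctly handles item (i) and the ``reduces to a graph'' branch of item (ii). The genuine gap is the last step of item (ii), where you try to show that $\pio{\folded}$ is \emph{itself} a $w$--subgroup, i.e.\ satisfies condition (iii) of Definition~\ref{defn: w-subgroup}. That claim is both stronger than what the lemma asserts and false in general. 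For example, take $\FF=\langle a,b\rangle$ and $w=[a^2,b]$, so $\pi(w)=2$; let $\Lambda$ be the core graph of $\KK=\langle a^2,b\rangle$ and $Y=\Lambda\cup_w D$. Then $Y\immerses X$ satisfies all the hypotheses with $\chi(Y)=2-\pi(w)$, $\hatt{Y}^I=Y$, and $\pio{\folded}=\KK$ is a \emph{proper} rank-two subgroup of the rank-two overgroup $\FF$, in which $w$ is still imprimitive; so $\KK$ is not a $w$--subgroup (here the unique $w$--subgroup is $\FF$). Neither of your parenthetical justifications repairs this: the observation that a rank-$\pi(w)$ overgroup ``cannot acquire primitivity'' only shows that such an overgroup is again a candidate (this is exactly why maximal ones exist), not that it cannot exist; and the assertion that a rank-preserving extension of core graphs keeping $w$ filling must be an isomorphism is refuted by the same example, where $w$ hits every edge of both core graphs at least twice.

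The needed step is weaker, and is what the paper does: since $w$ is imprimitive in $\pio{\folded}$ and $\rk{\pio{\folded}}=\pi(w)$, the subgroup $\pio{\folded}$ is contained in \emph{some} $w$--subgroup $\KK_i$ (the $w$--subgroups are precisely the maximal rank-$\pi(w)$ subgroups containing $w$ imprimitively, and maximal overgroups exist by the finiteness underlying Lemma~\ref{lem: Finitely many w-subgroups}). Because $Y^{(1)}$ is a core graph, so is $\folded$, hence the immersion $\folded\immerses\ambient$ factors through $\Lambda_i=Q_i^{(1)}$; therefore $\hatt{Y}^I\immerses X$ factors through $Q_i\immerses X$, and so does $Y$. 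No identification of $\hatt{Y}^I$ with $Q_i$ is required. Relatedly, your equality-case analysis ($\chi(\hatt{Y})=\chi(\hatt{Y}^I)$, $\und$ being a core graph, etc.) is unnecessary: $Y\to X$ always factors as $Y\to\hatt{Y}\to\hatt{Y}^I\to X$, and $Y\to\hatt{Y}^I$ is automatically an immersion because the composite to $X$ is.
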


\begin{proof}
  Since $Y$ has no free faces, Corollary~\ref{monotonicity} implies
  that $\chi(\hatt Y^I)\geq \chi(Y)$, where $\hatt Y^I$ is the
  immersed one-relator pushout of $Y$ (Definition~\ref{def:
    one-relator pushout}.)
  
  We first prove item (i). Suppose that $\chi(Y)>2-\pi(w)$. If
  $\pio{\folded}$ is the subgroup of ${\FF}$ corresponding to the
  1-skeleton of $\hatt{Y}^I$,
  \[
  \rk{\pio{\folded}}=2-\chi(\hatt Y^I)\leq 2-\chi(Y)<\pi(w)~.
  \]
  Since $\pio{\folded}$ is a subgroup of ${\FF}$ of rank less than
  $\pi(w)$, $w$ represents a primitive element of $\pio{\folded}$, so
  $Y$ reduces to a graph, by Lemma~\ref{lem: nielsenreduces}.
  
  The proof of item (ii) is similar. If $Y$ is a graph there is
  nothing to prove. If $w$ is primitive in $\pio{\folded}$ then, as in
  the previous paragraph, $Y$ reduces to a graph.  Otherwise,
  $\rk{\pio{\folded}}=\pi(w)$ and $w$ is not primitive in
  $\pio{\folded}$, so there is a $w$--subgroup $\KK_{i}$ of ${\FF}$
  containing $\pio{\folded}$. Since $Y^{(1)}$ is a core graph,
  ${\folded}$ is also a core graph, and so the immersion
  ${\folded}\immerses {\ambient}$ factors through
  $Q_i^{(1)}=\Lambda_i$.  Therefore $\hatt Y^I\immerses X$ factors
  through $Q_{i}\immerses X$, and so $Y\immerses X$ also factors
  through $Q_{i}$.
\end{proof}

\subsection{Homomorphisms from finitely generated groups}

In this section we combine the observations from the previous
subsections and finally prove Theorem~\ref{thm: f.g. subgroups}.  The
first lemma provides a tool for promoting results about immersions to
results about subgroups.

\begin{lemma}\label{lem: Folding 2-complexes}
  A combinatorial map of finite 2-complexes $X\to Y$ factors as 
  \[
    X\to Z\immerses Y
  \]
  where $X\to Z$ is surjective and $\pi_1$-surjective.
\end{lemma}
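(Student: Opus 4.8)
The plan is to construct the factorization by a two-stage process: first collapse down to the image, then fold to an immersion.

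\medskip

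\noindent\textbf{Approach.} Given the combinatorial map $\phi\colon X\to Y$ of finite $2$-complexes, I would first pass to its image. Let $X\to X'$ be the quotient identifying two cells of $X$ whenever they have the same image in $Y$; more carefully, one builds $X'$ cell-by-cell, taking $(X')^{(0)}$ to be the image $\phi(X^{(0)})$, adding a $1$-cell of $X'$ for each $1$-cell in the image of $\phi$ with the induced attaching data, and similarly a $2$-cell for each $2$-cell in the image. This produces a subcomplex-like quotient $X\to X'$ that is surjective onto $X'$ by construction, together with a cellular map $X'\to Y$ whose image is all of $\phi(X)$. Crucially, since $X$ is finite, $X'$ is finite, and the map $X\to X'$ is surjective; moreover it is $\pi_1$-surjective, because any loop in $X'$ is the image of a loop in $X$ (lift cell-by-cell along the surjection on cells).

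\medskip

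\noindent\textbf{Folding to an immersion.} The map $X'\to Y$ need not be an immersion — it may fail to be a local injection at vertices or may identify edges sharing an endpoint. Here I would invoke Stallings-style folding in the $2$-complex setting, exactly as the paper already uses it (the $1$-skeleton folding of $\und$ to $\folded$, and the notion that a branched map induces an immersion on links). Fold the $1$-skeleton of $X'$ to an immersion onto $Y^{(1)}$, carrying along the $2$-cells; each fold move is a quotient, so the composite $X'\to Z$ is still surjective and $\pi_1$-surjective, and after finitely many folds (finiteness of $X'$ again) one reaches $Z$ with $Z\to Y$ a local injection, i.e.\ an immersion $Z\immerses Y$. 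Composing, $X\to X'\to Z$ is surjective and $\pi_1$-surjective since both stages are, and $Z\immerses Y$ with $X\to Z\to Y$ equal to the original map.

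\medskip

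\noindent\textbf{Main obstacle.} The delicate point is handling the $2$-cells correctly under folding: when two $1$-cells are folded together, the attaching maps of $2$-cells must be updated consistently, and one must check the process terminates and that no information is lost that would break surjectivity or the factorization $X\to Z\to Y$. This is routine but needs care — it is essentially the observation (already made in the paper) that a cellular map which is combinatorial on $1$-skeleta and induces an immersion on links of vertices is the "folded" model, and that folding strictly decreases a complexity measure (number of cells) so terminates. I expect no genuine difficulty beyond bookkeeping, since the paper has already developed all the needed folding technology for graphs and for $1$-skeleta of $2$-complexes.
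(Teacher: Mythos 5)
There is a genuine gap in your first stage. The complex $X'$ you describe---one cell for each cell in the image of $\phi$, with attaching data induced from $Y$---is just the image subcomplex $\phi(X)\subseteq Y$, and the map $X\to\phi(X)$ is surjective but in general \emph{not} $\pi_1$-surjective. Your justification that ``any loop in $X'$ is the image of a loop in $X$ (lift cell-by-cell)'' fails: the lifts of consecutive cells need not share endpoints, so a loop downstairs need not lift to a loop. Concretely, let $Y$ be a rose with two petals $a,b$ and let $X$ be a circle subdivided into two edges mapping to $a$ and $b$ respectively; the map is combinatorial and surjective, but $\pi_1(X)\cong\mathbb{Z}$ maps onto $\langle ab\rangle\neq\pi_1(Y)$. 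Note also that in this example the correct $Z$ is $X$ itself (the labelled circle already immerses into the rose), not the image; and in general, once $X'=\phi(X)$ the map $X'\to Y$ is an inclusion, hence already an immersion, so your second stage is vacuous and cannot repair the loss of $\pi_1$-surjectivity. Identifying cells of $X$ merely because they have the same image in $Y$ is exactly the wrong move.

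The repair is to drop the passage to the image entirely and fold $X$ itself, which is what the paper does: fold the $1$-skeleton to get $X^{(1)}\to Z^{(1)}\immerses Y^{(1)}$ (folds are quotients of $X^{(1)}$, hence surjective and $\pi_1$-surjective), push the attaching maps of the $2$-cells of $X$ forward to $Z^{(1)}$, and identify two $2$-cells only when they have the same image in $Y$ \emph{and} equal boundary maps in $Z^{(1)}$. The remaining content---which your ``carrying along the $2$-cells'' elides---is the verification that $Z\to Y$ is then an immersion: if two distinct $2$-cells of $Z$ incident at a point had the same image in $Y$, their attaching maps would agree at that point, hence agree everywhere because $Z^{(1)}\to Y^{(1)}$ is an immersion, so they would already have been identified. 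The principle throughout is that cells of $X$ are identified only when forced by folding, never merely because they share an image in $Y$; that is precisely what preserves $\pi_1$-surjectivity.
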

\begin{proof}
This is part of \cite[Lemma 4.1]{louder_one-relator_2018}.
\end{proof}

This has the following useful consequence.

\begin{lemma}\label{lem: immersing fp subgroups}
  Let $Y$ be a finite 2-complex, and let $f\colon H\to\pio{Y}$ be a
  homomorphism from a finitely presented group. Then there is a an
  immersion from a finite, connected 2-complex $g\colon Z\immerses Y$
  and a surjection $h\colon H\to \pio{Z}$ such that $f=g_*\circ h$.
\end{lemma}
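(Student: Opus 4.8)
The plan is to realize the homomorphism $f\colon H\to\pio{Y}$ topologically by a map of 2-complexes and then apply Lemma \ref{lem: Folding 2-complexes}. First I would fix a finite presentation $H=\langle a_1,\dots,a_r\mid R_1,\dots,R_s\rangle$ and build the associated presentation complex $K$, so that $\pio{K}=H$ and $K$ is a finite, connected 2-complex. Since $f\colon\pio{K}\to\pio{Y}$ is a homomorphism between fundamental groups of CW-complexes, standard obstruction theory (or simply: map the $1$-skeleton so that each generator traverses an edge-loop representing $f(a_i)$, then fill each relator disc of $K$ using that $f(R_j)=1$ in $\pio{Y}$) produces a cellular map $\varphi\colon K\to Y$ with $\varphi_*=f$ on $\pi_1$. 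After a preliminary subdivision of $K$ we may take $\varphi$ to be combinatorial, i.e. to send $n$-cells to $n$-cells.

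Next I would apply Lemma \ref{lem: Folding 2-complexes} to the combinatorial map $\varphi\colon K\to Y$, obtaining a factorization
\[
  K\xrightarrow{\ p\ } Z\xhookrightarrow{\ g\ } Y
\]
with $p$ surjective and $\pi_1$-surjective and $g\colon Z\immerses Y$ an immersion, where $Z$ is a finite 2-complex. Connectedness of $Z$ follows from connectedness of $K$ and surjectivity of $p$ (if one prefers, one may pass to the component of $Z$ containing the image of the basepoint and discard the rest, since $p$ is $\pi_1$-surjective the image lands in a single component). Set $h=p_*\colon H=\pio{K}\onto\pio{Z}$; this is the required surjection. Finally, $g_*\circ h = g_*\circ p_* = (g\circ p)_* = \varphi_* = f$, which is exactly the factorization claimed.

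The only genuinely non-routine point is the very first step: producing a \emph{combinatorial} map $\varphi\colon K\to Y$ inducing $f$ on $\pi_1$, rather than merely a continuous one. Continuity is immediate from the usual construction on presentation complexes, and combinatoriality is then arranged by subdividing $K$ finely enough that the image of each cell lies in a single cell of $Y$ and the map is cellular; since $K$ is finite this is a finite process and does not change $\pio{K}$ or the map it induces. Everything else — the existence of $K$, the obstruction-theoretic filling of the relator discs, and the behaviour of $\pi_1$ under the factorization — is standard, and the substantive content of the lemma is carried entirely by Lemma \ref{lem: Folding 2-complexes}.
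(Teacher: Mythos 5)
Your proposal is correct and follows essentially the same route as the paper: realize $f$ by a combinatorial map from a finite complex built from a presentation of $H$, then apply Lemma \ref{lem: Folding 2-complexes} and take $h=p_*$. The paper makes your ``subdivide $K$ to make $\varphi$ combinatorial'' step precise by gluing singular (van Kampen) disc diagrams $D_j\to Y$ for the relators onto a rose instead of using the literal presentation complex --- this is exactly the rigorous form of filling the relator discs combinatorially, and otherwise the two arguments coincide.
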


\begin{proof}
  Let $\langle x_1,\ldots, x_m\mid r_1,\ldots, r_n\rangle$ be a finite
  presentation for $H$. Let $R \to Y$ be a combinatorial map from a
  rose $R$ with petals corresponding to the $x_i$.  Each relator $r_j$
  is the boundary of a singular disc diagram $D_j\to Y$.  Let $X$ be
  constructed by gluing the $D_j$ to $R$ along their boundaries.
  There is a combinatorial map $X\to Y$ realizing the homomorphism
  $f$.  Applying Lemma \ref{lem: Folding 2-complexes}, $X\to Y$
  factors through an immersion $Z\immerses Y$.
\end{proof}

For homomorphisms from finitely generated groups, we obtain the
following, slightly weaker, result.

\begin{lemma}\label{lem: Immersing fg subgroups}
  Let $Y$ be a finite 2-complex, and let $f\colon H\to\pio{Y}$ be a
  homomorphism from an $n$--generator group. There is a sequence of
  $\pi_1$-surjective immersions of finite, connected 2-complexes
  without free faces
  \[
    Z_0\immerses Z_1\immerses\cdots \immerses Z_i\immerses\cdots~,
  \]
  an immersion $g$ from the direct limit $Z=\varinjlim Z_i$ into $Y$
  and a $\pi_1$-surjection $h\colon H\to\pio{Z}$ such that $f=g_*\circ
  h$.  Furthermore, we may take $\rk{\pio{Z_0}}\leq n$.
\end{lemma}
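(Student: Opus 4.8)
The plan is to take a finite generating set $y_1,\dots,y_n$ for $H$, realize the $y_i$ by loops in $Y$ based at a common vertex, and build $Z_0$ as the result of folding the resulting map from the rose $R_n$ with $n$ petals into $Y^{(1)}$ together with whatever $2$-cells the images of these loops happen to bound; more precisely, $Z_0$ will be the smallest subcomplex of (a cover of) $Y$ containing the folded image of $R_n$, made into an immersion via Lemma \ref{lem: Folding 2-complexes} applied to the $1$-skeleton, after which we push forward any $2$-cells whose attaching maps already lie in the folded $1$-skeleton. This gives a $\pi_1$-surjective immersion $Z_0\immerses Y$ with $\rk{\pi_1(Z_0^{(1)})}\leq n$, hence $\rk{\pi_1(Z_0)}\leq n$, and a factorization $H\onto \pi_1(Z_0)$ of $f$ through $g_0{}_*$.

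The issue is that $Z_0$ need not be without free faces: some $2$-cell of $Z_0$ may traverse an edge exactly once. The natural move is to collapse such a free face (a face collapse in the sense of Definition \ref{defn: Nielsen reduction}), but this decreases the number of $2$-cells and may destroy $\pi_1$-surjectivity or create new free faces, and iterating collapses alone need not terminate in something that still immerses into $Y$ with the right properties. Instead I would proceed the other way: rather than remove the free face, enlarge the complex. Given a free face in $Z_i$ — an edge $e$ traversed exactly once, by a single $2$-cell $g$ — one adjoins to $Z_i$ the data needed to make $e$ traversed at least twice, by pulling back along the immersion $Z_i\immerses Y$ a suitable finite neighborhood in $Y$ that supplies a second $2$-cell (or additional incidences) over $e$. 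Concretely, since $Y$ is a genuine $2$-complex in which every edge of its single $2$-cell's attaching circle is traversed, the immersion $Z_i\immerses Y$ can be ``completed'' near $e$ by adjoining finitely many cells of the fiber product $Z_i\times_{Y} \widetilde{Y}$, producing $Z_{i+1}\supseteq Z_i$ with $Z_i\immerses Z_{i+1}\immerses Y$, $\pi_1$-surjective, and with strictly fewer free faces over that edge (or over $w(S)$). The maps $Z_i\immerses Z_{i+1}$ are $\pi_1$-surjective because we only attach cells along paths already present, and the composite $Z_i\immerses Y$ is unchanged up to the inclusion.

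The direct limit $Z=\varinjlim Z_i$ then carries an immersion $g\colon Z\immerses Y$ (immersions are a local condition, preserved under directed unions), the inclusion $H\onto\pi_1(Z_0)\onto\pi_1(Z)$ is the required $\pi_1$-surjection $h$ with $f=g_*\circ h$, and $Z$ has no free faces since any free face of $Z$ would already be a free face of some $Z_i$ and would have been removed at stage $i+1$. The main obstacle is organizing the enlargement step so that it (a) genuinely reduces the ``free-face defect'' — one needs a well-chosen non-negative quantity, such as the number of edges in the image that are traversed only once, counted with the finitely many $2$-cells present — so that the process, while possibly infinite, exhausts all free faces in the limit; and (b) keeps each $Z_i$ finite and connected with $Z_i\immerses Z_{i+1}$ genuinely $\pi_1$-surjective. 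Since $Y$ is finite there are only finitely many edges, and each can acquire only finitely many incidences before it is no longer a free face, so a book-keeping argument along these lines terminates the defect over each edge after finitely many steps, and the limit is as claimed. Note we do not assert $Z$ is finite; that is exactly why the conclusion here is weaker than in Lemma \ref{lem: immersing fp subgroups}.
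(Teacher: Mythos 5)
There is a genuine gap, and it is at the heart of the statement: your construction never makes the relations of $H$ hold in $\pi_1(Z)$, so the asserted factorization $h\colon H\onto\pio{Z_0}$ (and hence $f=g_*\circ h$) simply need not exist. Folding the rose on the generators and importing only those 2-cells of $Y$ whose attaching maps already lift to the folded graph produces a quotient of the free group $F_n$, but nothing forces the kernel of $F_n\onto H$ to die in $\pi_1(Z_0)$, and your later enlargement steps only ever attach cells of $Y$ along loops already present, which again does not impose the relations of $H$. Concretely, take $H=\langle y\mid y^2\rangle$, $Y$ the presentation complex of $\langle a,t\mid ta^2t^{-1}\rangle$, and $f(y)=a$: your $Z_0$ is a single circle (the attaching word $ta^2t^{-1}$ does not lift), it has no 2-cells and hence no free faces, the process halts, and there is no homomorphism $h\colon\mathbb{Z}/2\to\mathbb{Z}$ with $f=g_*\circ h$. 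This is exactly why the paper's proof exhausts $H$ by finitely presented groups $H_i$ and, as in Lemma \ref{lem: immersing fp subgroups}, attaches van Kampen diagrams for their relators before folding via Lemma \ref{lem: Folding 2-complexes}; that step cannot be omitted.

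Two further problems with the free-face step. First, your reason for avoiding face collapses is unfounded: by Definition \ref{defn: Nielsen reduction} face collapses are homotopy equivalences, so collapsing free faces cannot destroy $\pi_1$-surjectivity, and the paper's observation that the preimage of a free face under an immersion is again a free face is what keeps the collapsed complexes in an immersive tower. Second, your enlargement move is not shown to work: a 2-cell of $Y$ incident to the offending edge generally has attaching loop not contained in $Z_i$, so attaching it requires new 1-cells, which breaks $\pi_1$-surjectivity of $Z_i\immerses Z_{i+1}$ and creates fresh free faces, and the bookkeeping claim that this cascade settles is not justified. Moreover the lemma (and its use in Lemma \ref{lem: Universal property of w-subgroups}) requires every finite stage $Z_i$ to be without free faces, not merely the direct limit, which your intermediate complexes are not.
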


\begin{proof}
 The existence of the sequence of immersions is \cite[Lemma
   4.4]{louder_one-relator_2018}, and the final assertion about
 $\rk{\pio{Z_0}}$ is an immediate consequence of its proof.
\end{proof}

In general, when one applies Lemma \ref{lem: Folding 2-complexes}
there may be no relation between the Euler characteristics of the
complexes $X$ and $Z$. However, we will obtain some control using a
theorem of Howie.  Recall that a group is \emph{locally indicable} if
every non-trivial finitely generated subgroup has infinite
abelianization.

\begin{thm}[{\cite[Corollary~4.2]{howie-pairs}}]\label{thm: Howie}
  If $X$ is a 2-complex and $Y\subseteq X$ is a connected subcomplex
  such that $\pio{Y}$ is locally indicable and $H_2(X,Y)=0$ then the
  map $\pio{Y}\to\pio{X}$ induced by inclusion is injective.
\end{thm}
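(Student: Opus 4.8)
The plan is to prove this by the \emph{tower} method of Papakyriakopoulos, in the form developed by Howie, combined with the relative Freiheitssatz for one-relator products with locally indicable factors (Brodskii--Howie). Note first what the statement really is: the long exact sequence of the pair turns $H_2(X,Y)=0$ into injectivity of $H_1(Y)\to H_1(X)$, so the theorem upgrades this homological truism to $\pi_1$ under the extra hypothesis of local indicability, in the same spirit as Stallings' theorem earlier in the paper. So I would argue by contradiction: suppose $1\neq g\in\ker(\pio{Y}\to\pio{X})$. A loop representing $g$ and a null-homotopy of it have compact image, so after passing to the covering of $X$ corresponding to the image of $\pio{Y}$ (and worrying, at the level of bookkeeping, about whether the homological hypothesis survives this passage) we may assume $Y$ is finite and connected and $Y\to X$ is $\pi_1$-surjective.

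Next I would build a \emph{maximal tower lift}: a factorisation $Y\xrightarrow{\hat f}\widehat X\to X$ where $\widehat X\to X$ is an alternating composite of covering maps and inclusions of subcomplexes, chosen maximal subject to $\hat f$ being surjective and $\pi_1$-surjective. Finiteness of $Y$ guarantees such a maximal tower exists (the subcomplex steps are eventually forced to stabilise, and one never needs covers once $\hat f$ is $\pi_1$-surjective). At the top, $\hat f\colon Y\to\widehat X$ is surjective and $\pi_1$-surjective, $\widehat X$ admits no proper subcomplex containing $\hat f(Y)$, and $\pio{Y}\to\pio{\widehat X}=\pio{X}$, so it suffices to contradict the existence of $g$ at the level of $\widehat X$.

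Now the homological input. The relative cellular chain complex of a cover is obtained from that of the base by tensoring with a free module over the deck group, so exactness in degree $2$ is preserved; combined with the fact that restricting to a subcomplex only discards cells, this should give $H_2(\widehat X,Y)=0$, hence $\widehat X$ is obtained from $Y$, up to homotopy, by attaching at most as many $2$-cells as $1$-cells. Maximality of the tower (no proper subcomplex) forces every new $1$-cell to be crossed by a new $2$-cell, so one can order the construction to adjoin a single new generator $t_i$ and then a single relator genuinely involving $t_i$ at each stage. By Howie's relative Freiheitssatz, attaching such a relator to a complex whose fundamental group is locally indicable keeps the inclusion $\pi_1$-injective, and local indicability is itself preserved at each stage (again by Howie's closure results). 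Iterating, $\pio{Y}\to\pio{\widehat X}=\pio{X}$ is injective, a contradiction.

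The main obstacle is the middle of the argument: verifying that the hypothesis $H_2(X,Y)=0$, together with its structural consequence on the number of attached cells, genuinely survives the tower construction (the passage to covers, both in the initial reduction and inside the tower, needs the flat-base-change remark above plus care about extra components of preimages of $Y$), and then organising the reduced disc/sphere-diagram bookkeeping so that $\widehat X$ really is built from $Y$ by adjoining one generator and one essential relator at a time. Everything else is either formal (the compactness reduction, existence of the maximal tower) or quotable (the relative Freiheitssatz and the closure properties of locally indicable groups).
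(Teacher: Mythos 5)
You are not actually competing with an argument in the paper here: Theorem \ref{thm: Howie} is imported verbatim from Howie's paper (the citation is in the theorem header) and is used as a black box in the proof of Lemma \ref{homologylemma}; the authors give no proof. So your proposal is really an attempt to reconstruct Howie's own tower-theoretic argument, and in broad outline (compactness reduction, a maximal tower, then locally-indicable technology to add cells one at a time) it has the right ingredients — but the step you yourself flag as the ``main obstacle'' is a genuine gap, not bookkeeping.

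Concretely: (a) the staggered structure you want — order the cells of $\widehat X\setminus Y$ so that each stage adjoins one new generator $t_i$ and then one relator genuinely involving $t_i$ — does not follow from maximality of the tower plus the counting consequence of $H_2=0$. Since $X$ has no $3$-cells, $H_2(X,Y)=0$ says only that the relative boundary map $C_2(X,Y)\to C_1(X,Y)$ is injective. Take two new $1$-cells $s,t$ and two $2$-cells attached along words of the form $satb$ and $sat^{-1}b$ with $a,b$ loops in $Y$: the boundary matrix is $\left(\begin{smallmatrix}1&1\\1&-1\end{smallmatrix}\right)$, so $H_2=0$, yet no ordering adjoins one generator and then one relator involving it — both relators need both generators, so the second relator arrives with no new generator of its own. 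Handling such genuinely coupled (``independent'') systems of relators is exactly the content of Howie's theorem; reducing to the one-relator relative Freiheitssatz cell by cell is not possible in general. (b) Even when a one-generator-one-relator filtration exists, your iteration needs each intermediate group to remain locally indicable, and Howie's closure result requires the relator not to be a proper power in the relevant free product; nothing in the hypotheses excludes proper powers (attach $t$ and a $2$-cell along $(ta)^2$: $H_2(X,Y)=0$ but the resulting group has $2$-torsion), so the inductive hypothesis can be destroyed after one step even though the conclusion (injectivity) still holds. (c) The initial reduction is also off: you cannot shrink $Y$ to a finite subcomplex, because the fundamental group of a subcomplex of $Y$ need not inject into $\pio{Y}$, so local indicability is lost; the correct compactness reduction keeps all of $Y$ and adjoins only the finitely many cells of $X\setminus Y$ meeting the null-homotopy (injectivity of the relative $\partial_2$ obviously passes to that subcomplex). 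Finally, note that the ``relative Freiheitssatz'' you quote is a theorem of the same depth, proved by the same machinery; so even where your steps do apply, the proposal is a reduction to the literature the paper already cites, with the delicate combinatorial core — points (a) and (b) — being precisely what Howie's paper supplies.
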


We use Howie's theorem to prove the following lemma, which can also be
deduced from earlier results of Stallings
\cite[p171]{stallings-homology}.

\begin{lemma}\label{homologylemma}
  If $X$ is a {connected 2-complex} and $\pio{X}$ is
  generated by $n$ elements, then
  \[
  \chi(X)\geq 1-n
  \]
  with equality {only if} $\pio{X}$ is free on $n$ generators.
\end{lemma}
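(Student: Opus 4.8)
I would split the two assertions. The inequality $\chi(X)\geq 1-n$ is soft and does not use asphericity. Writing $b_i$ for the rational Betti numbers of $X$ (which I take to be finite, as is needed for $\chi(X)$ to be defined), the Euler--Poincar\'e formula gives $\chi(X)=b_0-b_1+b_2=1-b_1+b_2$ since $X$ is connected and at most $2$-dimensional. Because $H_1(X;\mathbb{Q})\cong\pio{X}^{\mathrm{ab}}\otimes\mathbb{Q}$ and $\pio{X}$ is generated by $n$ elements, $b_1\leq n$; together with $b_2\geq 0$ this yields $\chi(X)\geq 1-n$.

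For the equality statement, one direction is immediate: if $\pio{X}$ is free of rank $n$ then, since $X$ is aspherical, $X$ is homotopy equivalent to a wedge $R_n$ of $n$ circles, so $\chi(X)=\chi(R_n)=1-n$.

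For the converse I would argue as follows. Assume $\chi(X)=1-n$ and set $G=\pio{X}$. The formula above forces $b_1=n$ and $b_2=0$; since $G$ is $n$-generated and $G^{\mathrm{ab}}$ has rank $n$, this gives $G^{\mathrm{ab}}\cong\mathbb{Z}^n$ (an $n$-generated abelian group of rank $n$ is free abelian). The first real step is to upgrade $b_2=0$ to the integral vanishing $H_2(X;\mathbb{Z})=0$: because $X$ is $2$-dimensional, $H_2(X;\mathbb{Z})$ is the kernel of the cellular boundary map $C_2(X)\to C_1(X)$, hence a subgroup of a free abelian group, hence free abelian, and being of rank $b_2=0$ it vanishes. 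The second step is to set up Howie's theorem: realize an $n$-element generating set of $G$ by a $\pi_1$-surjective cellular map $R_n\to X$, and replace $X$ by the mapping cylinder $M$ of this map -- a finite $2$-complex, homotopy equivalent to $X$, containing $R_n$ as a subcomplex. The long exact sequence of $(M,R_n)$ together with $H_2(R_n)=0$ gives an exact sequence $0\to H_2(M)\to H_2(M,R_n)\to H_1(R_n)\to H_1(M)$; the last map is the surjection $\mathbb{Z}^n=\pio{R_n}^{\mathrm{ab}}\twoheadrightarrow G^{\mathrm{ab}}\cong\mathbb{Z}^n$, a surjective endomorphism of $\mathbb{Z}^n$ and hence an isomorphism, so it is injective and $H_2(M,R_n)\cong H_2(M)\cong H_2(X;\mathbb{Z})=0$. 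Since $\pio{R_n}$ is free, hence locally indicable, Howie's theorem (Theorem~\ref{thm: Howie}) applied to $R_n\subseteq M$ shows that $\pio{R_n}\to\pio{M}=G$ is injective; being also surjective it is an isomorphism, so $G$ is free of rank $n$.

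The main obstacle, and the only step with real content, is this last direction: the inequality and the easy half of the equality are formal, but deducing freeness from $\chi(X)=1-n$ requires (i) promoting the rational statement $b_2=0$ to $H_2(X;\mathbb{Z})=0$, where $2$-dimensionality is essential, and (ii) an appeal to Howie's theorem, whose hypotheses force the mapping-cylinder detour that converts the $\pi_1$-surjection $R_n\to X$ into a subcomplex inclusion. As the paper notes, this direction can alternatively be extracted from Stallings' homological characterization of free groups~\cite{stallings-homology}.
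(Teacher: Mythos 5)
Your proof is correct and follows essentially the same route as the paper: reduce to the case of equality, realize the generators by a $\pi_1$-surjection from a rose, pass to the mapping cylinder, use $\chi=1-n$ to get $H_2(M)=0$ and injectivity of $H_1(R_n)\to H_1(M)$, deduce $H_2(M,R_n)=0$ from the long exact sequence, and conclude via Howie's theorem and local indicability of free groups. The only differences are cosmetic: you spell out the easy direction (asphericity giving $\chi=1-n$ when $\pi_1$ is free) and the promotion of $b_2=0$ to $H_2(X;\mathbb{Z})=0$, which the paper leaves implicit.
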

\begin{proof}
  Let $x_1,\ldots,x_n$ be a generating set for $\pio{X}$.  Since $X$
  is 2-dimensional and $b_1(X)\leq n$ it is clear that $\chi(X)\geq
  1-n$, so it suffices to show $\pio{X}$ is free on the $x_i$ if
  $\chi(X)=1-n$.  We can realize the $x_i$ by a combinatorial
  $\pi_1$-surjection of a rose $f\colon R\to X$. Let $M$ be the
  mapping cylinder of $f$, a 2-complex homotopy-equivalent to $X$. If
  $\chi(M)=1-n$ then $H_2(M)=0$ and the natural map $H_1(R)\to H_1(M)$
  is injective. Therefore, by the long exact sequence of a pair,
  $H_2(M,R)=0$ and so by Howie's theorem, $\pio{R}\to\pio{M}$ is
  injective, since free groups are locally indicable. Therefore,
  $\pio{M}=\pio{X}$ is free on the $x_i$.
\end{proof}

{
\begin{remark}\label{rem: One-relator rank}
Lemma \ref{homologylemma} quickly implies a classical result of Magnus
\cite{magnus_uber_1939}. In the one-relator case (which Magnus
attributes to Dehn), the result is as follows: if $G=F_n/\ncl{w}$,
then either $G\cong F_{n-1}$ or $\rk{G}=n$ \cite[Proposition
  II.5.11]{lyndon-schupp}.
\end{remark}
Lemma \ref{homologylemma} also enables us to prove the group-theoretic
analogue of Lemma~\ref{lem: Negative immersions}, from which Theorem
\ref{thm: k-free} follows immediately.}

\begin{lemma}\label{lem: Universal property of w-subgroups}
  Let $\GG={\FF}/\ncl{w}$ be a one-relator group with $\pi(w)>1$, and
  let $f\colon H\to\GG$ be a homomorphism from a finitely generated
  group $H$.
  \begin{enumerate}[(i)]
  \item If $\rk{H}<\pi(w)$ then $f$ factors through a free group.
  \item If $\rk{H}=\pi(w)$ and $H$ is not free of rank $\pi(w)$
    then either $f$ factors through a free group or $f(H)$ is
    conjugate into some $w$--subgroup $P_k$.
\end{enumerate}
\end{lemma}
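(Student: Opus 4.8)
The plan is to realise $f$ by immersions of finite complexes and feed those into Lemma~\ref{lem: Negative immersions}. Write $n=\rk{H}$, so $n\leq\pi(w)$. Applying Lemma~\ref{lem: Immersing fg subgroups} to $f\colon H\to\pio{X}=\GG$ produces a sequence of $\pi_1$-surjective immersions of finite, connected $2$-complexes without free faces $Z_0\immerses Z_1\immerses\cdots$, an immersion $g\colon Z=\varinjlim Z_l\to X$, and a $\pi_1$-surjection $h\colon H\onto\pio{Z}$ with $f=g_*\circ h$ and $\rk{\pio{Z_0}}\leq n$. I would first collapse valence-one vertices (their incident edges lie on no $2$-cell, the attaching maps of $Z_l$ being immersions) so that each one-skeleton $Z_l^{(1)}$ is a core graph; this affects neither $\chi(Z_l)$ nor $\pio{Z_l}$, and leaves $Z_l$ without free faces and immersing in $X$. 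Since $\pio{Z_0}\onto\pio{Z_1}\onto\cdots$ and $\rk{\pio{Z_0}}\leq n$, every $\pio{Z_l}$ is $n$-generated, and so is $\pio{Z}$, being a quotient of $H$; hence $\chi(Z_l)=1-b_1(Z_l)+b_2(Z_l)\geq 1-n$ for every $l$.

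The heart of the argument is a case analysis of the individual $Z_l$. Fix $l$. If $\chi(Z_l)=1-n$, then $b_1(Z_l)=n$ and $b_2(Z_l)=0$, so $H_1(Z_l)\cong\mathbb{Z}^n$ and $H_2(Z_l)=0$; realising an $n$-element generating set of $\pio{Z_l}$ by a $\pi_1$-surjection from a rose and passing to the mapping cylinder, the long exact sequence of the pair together with Howie's Theorem~\ref{thm: Howie} (free groups being locally indicable) show, exactly as in the proof of Lemma~\ref{homologylemma}, that $\pio{Z_l}$ is free of rank $n$. Otherwise $\chi(Z_l)\geq 2-n\geq 2-\pi(w)$, so Lemma~\ref{lem: Negative immersions} applies to $Z_l\immerses X$: either $Z_l$ reduces to a graph, whence $\pio{Z_l}$ is free; or $\chi(Z_l)=2-\pi(w)$ and $Z_l\immerses X$ factors through some $Q_j\immerses X$. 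Observe that when $n<\pi(w)$ this last alternative is impossible, since then $\chi(Z_l)\geq 2-n>2-\pi(w)$; so in that case $\pio{Z_l}$ is free for every $l$.

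It remains to combine these. If $\pio{Z_l}$ is free for every $l$, then the ranks $\rk{\pio{Z_l}}$ form a non-increasing sequence of non-negative integers (a surjection cannot raise rank), hence stabilise at some value $r$, after which the transition maps are surjective endomorphisms of a free group of rank $r$ and therefore isomorphisms by Hopficity; so $\pio{Z}=\varinjlim\pio{Z_l}$ is free of rank $r$, and $f=g_*\circ h$ factors through the free group $\pio{Z}$. By the previous paragraph this is automatic when $n<\pi(w)$, giving (i). Otherwise $n=\pi(w)$ and some $Z_l\immerses X$ factors through $Q_j\immerses X$; then the map $\pio{Z_l}\to\GG$ factors through $\pio{Q_j}=P_j$, which embeds in $\GG$ by Theorem~\ref{thm: Q is a subgroup}, and since $\pio{Z_l}\onto\pio{Z}$ and $h$ is surjective, $f(H)=g_*(\pio{Z})$ equals the image of $\pio{Z_l}$ in $\GG$, so $f(H)$ is conjugate into $P_j$ — this gives (ii). (The hypothesis in (ii) that $H$ is not free of rank $\pi(w)$ is not used here; it merely excludes a trivial instance.)

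The step I expect to be the real obstacle is the bookkeeping that makes everything fit: controlling the Euler characteristics of the auxiliary complexes precisely enough — the bound $\chi(Z_l)\geq 1-n$ together with $n\leq\pi(w)$ — so that each $Z_l$ lies in, or exactly at the boundary of, the range where Lemma~\ref{lem: Negative immersions} and (for $\chi(Z_l)=1-n$) Howie's theorem apply, and then pinning down exactly when the ``factors through $Q_j$'' alternative can be triggered (only $n=\pi(w)$). The ancillary points — that the $Z_l$ may be taken without free faces and with core one-skeleta, and that $\pi_1$ commutes with the direct limit $Z=\varinjlim Z_l$ — are routine once set up.
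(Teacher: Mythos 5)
Your proof is correct and follows essentially the same route as the paper: Lemma~\ref{lem: Immersing fg subgroups}, the Euler characteristic/Howie argument of Lemma~\ref{homologylemma}, and Lemma~\ref{lem: Negative immersions}, combined with Hopficity of free groups. The only deviations are organisational: you argue directly via a case analysis on each $Z_l$ and deduce (ii) from a single non-free $Z_l$ (using $\pio{Z_l}\onto\pio{Z}$), where the paper argues contrapositively and pigeonholes over the finitely many $Q_k$, and your preliminary passage to core one-skeleta makes explicit a hypothesis of Lemma~\ref{lem: Negative immersions} that the paper's proof leaves implicit.
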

\begin{proof}
By Lemma \ref{lem: Immersing fg subgroups}, there is a sequence of
$\pi_1$-surjective immersions of finite, connected 2-complexes without
free faces
\[
Z_0\immerses Z_1\immerses\cdots\immerses Z_i\immerses\cdots
\]
so that $f$ factors through $\pio{Z}$, where $Z=\varinjlim Z_i$.
Therefore, if $f$ does not factor through a free group, $\pio{Z}$ is
not free.  Since free groups are Hopfian, $\pio{Z_i}$ is not free for
all but finitely many $i$, and so we may assume without loss of
generality that $\pio{Z_i}$ is not free for any $i$.

If $\rk{H}<\pi(w)$ then, for all $i$, 
\[
\chi(Z_i)\geq 2-\rk{H}>2-\pi(w)
\]
by Lemma \ref{homologylemma}, and so $Z_i$ Nielsen reduces to a graph
by Lemma \ref{lem: Negative immersions}, which contradicts the
assumption that $\pio{Z_i}$ is not free. This proves item (i).

If $\rk{H}=\pi(w)$ then, similarly, $\chi(Z_i)\geq 2-\pi(w)$ for all
$i$, and since $\pio{Z_i}$ is not free, we must have
$\chi(Z_i)=2-\pi(w)$. Therefore, by Lemma \ref{lem: Negative
  immersions}, each immersion $Z_i\immerses X$ factors through some
$Q_{k(i)}\immerses X$. Since there are only finitely many $Q_k$ by
Lemma \ref{lem: Finitely many w-subgroups}, there is a $k$ such that
$Z_i\immerses X$ factors through $Q_k$ for infinitely many $i$, whence
$f$ factors through $P_k$. This proves item (ii).
\end{proof}

\subsection{$w$--subgroups are subgroups}
 
At last we can prove, as claimed, that the $w$-subgroups $P_i$ really
are subgroups of the one-relator group $\GG$. Recall that the maps
$P_i\to G$ are induced by immersions of one-relator complexes
$Q_i\immerses X$.

\begin{theorem}\label{thm: Q is a subgroup}
  Let ${\FF}$ be a free group with $w\in {\FF}$. The natural maps
  $P_i\to\GG$ are injective.
\end{theorem}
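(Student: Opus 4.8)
Fix $i$. The plan is to reduce the theorem to the assertion that the immersion $Q_i\immerses X$ is $\pi_1$--injective, and then to prove that assertion by applying Howie's theorem (Theorem~\ref{thm: Howie}) to the mapping cylinder of $Q_i\immerses X$. When $\pi(w)=1$ we have $w=u^k$ with $u$ not a proper power, $\KK_i=\langle u\rangle$, and injectivity of $P_i=\langle u\rangle/\ncl{w}\to\GG$ is the classical fact recorded in Example~\ref{ex: w-subgroups of powers}, so assume $\pi(w)\geq 2$. Then $w$ is not a proper power in $\FF$, and hence not a proper power in $\KK_i$, so $P_i=\KK_i/\ncl{w}$ is a \emph{torsion-free} one-relator group; in particular $P_i$ is locally indicable, and $Q_i$, like $X$, is aspherical by Lyndon's asphericity theorem (cf.\ Corollary~\ref{cor: Lyndon asphericity}). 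Since the homomorphism $\pio{Q_i}=P_i\to\pio{X}=\GG$ induced by $Q_i\immerses X$ is precisely the natural map of the theorem (Definition~\ref{defn: w-subgroups of O}), it suffices to prove that this immersion is $\pi_1$--injective.

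Let $M$ be the mapping cylinder of $Q_i\immerses X$, so that $M\simeq X$ and $Q_i\subseteq M$ is a subcomplex. Because $\pio{Q_i}=P_i$ is locally indicable, Theorem~\ref{thm: Howie} will give an injection $P_i=\pio{Q_i}\into\pio{M}=\GG$ once we know that $H_2(M,Q_i)=0$. Feeding the asphericity of $Q_i$ and $X$, and the fact that each has a single two-cell, into the long exact sequence of the pair $(M,Q_i)$, one checks that $H_2(M,Q_i)=0$ amounts to the two conditions that $H_2(Q_i)\to H_2(X)$ be surjective and that $H_1(Q_i)\to H_1(X)$ be injective. Neither condition holds for an arbitrary embedding of free groups -- the second fails exactly when $\ker\big(H_1(\KK_i)\to H_1(\FF)\big)\not\subseteq\langle[w]\rangle$ -- so the key is to first replace $X$ by a suitable finite cover.

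By Stallings' theorem \cite{stallings-folding} the immersion $\Lambda_i=Q_i^{(1)}\immerses\ambient$ extends to a finite cover $\hat\ambient\to\ambient$ in which $\Lambda_i$ is embedded as a subgraph; attaching two-cells along all lifts of $w$ produces a finite cover $\hat X\to X$, and since $w$ has a unique lift to $\Lambda_i$ (Lemma~\ref{lem: w-subgroups of free groups are malnormal}) one of those two-cells is attached along it, so $Q_i$ sits inside $\hat X$ as the subcomplex $\Lambda_i\cup_w D$. As $\Lambda_i$ is now a subgraph of $\hat\ambient$, the inclusion induces a split injection $H_1(\KK_i)=H_1(\Lambda_i)\into H_1(\hat\ambient)$; this makes $H_2(Q_i)\to H_2(\hat X)$ surjective and reduces the injectivity of $H_1(Q_i)\to H_1(\hat X)$ to the statement that the subgroup of $H_1(\hat\ambient)$ spanned by the classes of all lifts of $w$ meets $H_1(\Lambda_i)$ in exactly $\langle[w]\rangle$. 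The main obstacle is to see that $\hat\ambient$ can moreover be chosen to be \emph{$w$--efficient}, in the sense that every lift of $w$ which traverses an edge of $\Lambda_i$ coincides with the unique lift lying in $\Lambda_i$; this refinement of the Marshall Hall construction is where the hypotheses that $w$ is not a proper power and that $\KK_i$ is malnormal enter. Granted this, the remaining lifts of $w$ are supported off the edges of $\Lambda_i$ and contribute nothing to the intersection with $H_1(\Lambda_i)$, so $H_2(M',Q_i)=0$ for the mapping cylinder $M'\simeq\hat X$; Theorem~\ref{thm: Howie} then yields $P_i=\pio{Q_i}\into\pio{\hat X}$, and composing with the injection $\pio{\hat X}\into\GG$ induced by the finite cover completes the proof.
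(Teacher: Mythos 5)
There is a genuine gap: your whole argument rests on the existence of the ``$w$--efficient'' cover, which you grant rather than prove, and which in general cannot be arranged at all. Suppose some letter $x$ labels outgoing edges of $\Lambda_i$ at two distinct vertices $u\neq v$, and that the edge at $u$ is read positively by the canonical lift of $w$ at some position $j$ of the cyclic word. In \emph{any} finite cover $\hat{\ambient}$ containing $\Lambda_i$ as an embedded subgraph, the unique outgoing $x$--edge of $\hat{\ambient}$ at $v$ is precisely the $\Lambda_i$--edge, so the lift of $w$ that reads position $j$ starting at $v$ immediately traverses an edge of $\Lambda_i$; and since the canonical elevation (being a degree-one lift of $S$) reads position $j$ only at $u\neq v$, this lift lies on a \emph{different} elevation. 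Thus, outside very special cases such as $\Lambda_i$ a rose, every finite cover has non-canonical elevations of $w$ running over edges of $\Lambda_i$, no matter how the Marshall Hall construction is refined, so the hypothesis on which your homological computation depends fails. Even granting it, the surjectivity of $H_2(Q_i)\to H_2(\hat X)$ is asserted without argument: $H_2(\hat X)$ is spanned by the $2$--cells attached along elevations whose boundary $1$--chains vanish, and several non-canonical elevations can contribute, while $H_2(Q_i)$ has rank at most one. So $H_2(M',Q_i)=0$ is not established and Theorem~\ref{thm: Howie} cannot be applied as you intend. (A smaller point: local indicability of $P_i$ is a genuine theorem of Brodskii/Howie, not proved in the paper, though it is standard.)

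More structurally, your route makes no use of the dependence theorem, which is the engine of this paper, and it is implausible that a covering-space/homology argument alone could yield Theorem~\ref{thm: Q is a subgroup}, which refines Freiheitssatz-type statements. The paper argues quite differently: given a loop $\gamma$ in $Q_i$ that is null-homotopic in $X$, it attaches a van Kampen diagram to form $R=Q_i\cup_\gamma D$, folds the map $R\to X$ through an immersion $Z\immerses X$ (Lemma~\ref{lem: Folding 2-complexes}), retracts to a subcomplex $Y$ without free faces, and then combines Lemma~\ref{homologylemma} with Lemma~\ref{lem: Negative immersions} to conclude that $Y$ either Nielsen reduces to a graph -- impossible by Lemma~\ref{lem: nielsenreduces} and the definition of a $w$--subgroup -- or factors through some $Q_j$; maximality of $w$--subgroups forces $i=j$, giving a retraction $R\to Q_i$, so $\gamma$ was already null-homotopic in $Q_i$. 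Howie's theorem does appear in the paper's argument, but only inside the proof of Lemma~\ref{homologylemma}, not via the relative $H_2$ of a mapping cylinder over a finite cover.
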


\begin{proof}
  We assume that $w$ is nontrivial and that $\pi(w)>1$, since the case
  $\pi(w)=1$ is well-known, as noted in Example \ref{ex: w-subgroups
    of powers}.
  
  Let $\gamma\colon S^1\to Q_i$ be an edge loop whose image in $X$ is
  null-homotopic. Let $D$ be a van Kampen diagram for $\gamma$. Let
  $R=Q_i\cup_\gamma D$, which comes equipped with a natural map $R\to
  X$. By Lemma \ref{lem: Folding 2-complexes}, this factors as
  \[
    R\to Z\immerses X
  \]
  with $R\to Z$ a $\pi_1$-surjection; in particular, we obtain a
  $\pi_1$-surjection $Q_i\immerses Z$. The complex $Z$ retracts to a
  subcomplex $Y\subseteq Z$ without free faces, and since $Q_i$ has no
  free faces the immersion $Q_i\to X$ factors through the retraction
  to $Y$. Now, $H=\pio{Y}$ is generated by $\pi(w)$ elements and is
  not free of rank $\pi(w)$ since it is a quotient of $P_i$, so by
  Lemma \ref{homologylemma}, $\chi(Y)\geq 2-\pi(w)$.  Therefore, by
  Lemma \ref{lem: Negative immersions}, either $Y$ reduces to a graph
  or it factors through some immersion $Q_j\immerses X$.  But the
  immersion $Q_i\immerses X$ factors through the immersion
  $Q_i\immerses Y$, so by Lemma~\ref{lem: nielsenreduces}, if $Y$
  reduces to a graph then $Q_i$ does too, contradicting the definition
  of a $w$--subgroup.  Therefore $Y\immerses X$ factors through some
  $Q_j$. It follows that $\KK_i\leq\KK_j$ (where these are the
  $w$--subgroups of ${\FF}$ corresponding to $Q_i$ and $Q_j$
  respectively) so, by the definition of a $w$--subgroup, $i=j$ and
  $Q_i\to Q_j$ is an isomorphism.  Therefore, $R$ retracts to $Q_i$,
  so $\gamma$ was already null-homotopic in $Q_i$.  This proves the
  theorem.
\end{proof}

Using Remark \ref{rem: w-subgroups don't split freely}, we see that
$\pi(w)$ is an invariant of the isomorphism type of the one-relator
group $\GG$.

\begin{corollary}
  If $w\in{\FF}$ is a word in a free group then $\pi(w)$ is the
  minimal rank of a non-free subgroup of the one-relator group
  $\GG={\FF}/\ncl{w}$.
\end{corollary}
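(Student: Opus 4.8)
The plan is to derive this corollary from Theorem~\ref{thm: Q is a subgroup} together with parts (i) of Theorem~\ref{thm: k-free} and Remark~\ref{rem: w-subgroups are one-ended}. We must establish two inequalities: first, that every non-free subgroup of $\GG$ has rank at least $\pi(w)$; and second, that $\GG$ actually contains a non-free subgroup of rank exactly $\pi(w)$.

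For the first inequality, suppose $H<\GG$ is a subgroup with $\rk{H}<\pi(w)$. Then $H$ is finitely generated, so Theorem~\ref{thm: k-free}(i) (equivalently, Theorem~\ref{thm: f.g. subgroups}(i), using only $\pi(w)>1$, which holds since otherwise $\GG$ is a non-trivial free product with a finite cyclic factor or is $\mathbb{Z}$ and the statement is trivial to check directly — in fact $\pi(w)=1$ means $w=u^k$ and the minimal rank of a non-free subgroup is $1$, matching the claim) tells us that $H$ is free. Contrapositively, any non-free subgroup of $\GG$ has rank at least $\pi(w)$. One should note the edge cases: if $w$ is primitive then $\GG$ is free and has no non-free subgroups, consistent with $\pi(w)=\infty$; and if $\pi(w)=1$ then the displayed claim asserts the minimal rank is $1$, which holds because $\langle u\rangle/\ncl{w}\cong\mathbb{Z}/k$ is a non-trivial finite, hence non-free, cyclic subgroup by Example~\ref{ex: w-subgroups of powers}.

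For the second inequality, we exhibit a non-free subgroup of rank $\pi(w)$: take any $w$--subgroup $\KK_i<{\FF}$, which exists by Lemma~\ref{lem: Finitely many w-subgroups} (the set is non-empty whenever $w$ is imprimitive, by the very definition of $\pi(w)$ as a minimum that is attained), and form $P_i=\KK_i/\ncl{w}$. By Theorem~\ref{thm: Q is a subgroup}, the natural map $P_i\to\GG$ is injective, so $P_i$ is genuinely a subgroup. It is generated by $\rk{\KK_i}=\pi(w)$ elements. By Remark~\ref{rem: w-subgroups are one-ended}, $P_i$ is one-ended, hence in particular not free. So $P_i$ is a non-free subgroup of $\GG$ of rank exactly $\pi(w)$ (its rank cannot be smaller by the first inequality). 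Combining the two inequalities gives the corollary.

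The only real subtlety — and it is minor — is bookkeeping around the degenerate cases $\pi(w)=\infty$ and $\pi(w)=1$, where the machinery of $w$--subgroups and Theorem~\ref{thm: Q is a subgroup} either does not apply or is the classical statement recalled in Example~\ref{ex: w-subgroups of powers}; these should be disposed of first. In the generic case $\pi(w)>1$ with $w$ imprimitive, everything is immediate from the results already proved, so I expect no genuine obstacle: this corollary is essentially a repackaging of Theorems~\ref{thm: k-free} and~\ref{thm: Q is a subgroup}.
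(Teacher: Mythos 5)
Your proof is correct and is essentially the paper's own (implicit) argument: the lower bound is Theorem~\ref{thm: k-free}(i), and the attainment is Theorem~\ref{thm: Q is a subgroup} together with Remark~\ref{rem: w-subgroups are one-ended}, with the degenerate cases $\pi(w)=\infty$ and $\pi(w)=1$ disposed of via Example~\ref{ex: w-subgroups of powers}. The only blemish is the parenthetical claim that $\pi(w)=1$ forces $\GG$ to be $\mathbb{Z}$ or a free product with a finite cyclic factor (false in general, e.g.\ $w=[a,b]^2$), but this is harmless since your subsequent argument that $\langle u\rangle/\ncl{w}\cong\mathbb{Z}/k$ embeds already settles that case.
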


\subsection{The case $\pi(w)=2$}

As explained in the introduction, the results of the previous section
show that, when $\pi(w)>2$, the subgroup structure of
$\GG={\FF}/\ncl{w}$ is like the subgroup structure of a hyperbolic
group. In this section, we examine the case $\pi(w)=2$, and notice
that the non-negatively curved behaviour of $\GG$ is concentrated in a
particular subgroup.  This follows from the next result, which shows
that in this case there is a unique $w$--subgroup of ${\FF}$.

\begin{proposition}
  \label{prop: Maximal rank-two}
  Let ${\FF}$ be a free group and $w\in {\FF}\smallsetminus 1$ an
  imprimitive element that is not a proper power. If $\HH_1$ and
  $\HH_2$ are rank-two subgroups of ${\FF}$ with $w$ contained in, but
  not primitive in, both $\HH_1$ and $\HH_2$, then $\langle
  \HH_1,\HH_2\rangle$ also has rank two.
  
  If $\pi(w)=2$ then there is a unique $w$-subgroup of $\FF$.
\end{proposition}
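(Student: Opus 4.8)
The plan is to obtain the first assertion as a direct application of the dependence theorem in the form of Theorem~\ref{introthm: main}, with all branching indices equal to $1$, and then to read off uniqueness from the maximality condition in the definition of a $w$--subgroup. Concretely, I would take $l=2$, and inside each $\HH_i$ take the singleton family $\{\langle v_{i,1}\rangle\}:=\{\langle w\rangle\}$ with $n_{i,1}:=1$. Since $w$ is indivisible in $\FF$ it is indivisible in each $\HH_i$, and the cyclic subgroup generated by an indivisible element of a free group is malnormal; so $\{\langle w\rangle\}$ is a malnormal family in $\HH_i$. A singleton family $\{\langle v\rangle\}$ is dependent precisely when $v$ is imprimitive, so $\{\langle w\rangle\}$ is dependent in each $\HH_i$ by hypothesis. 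The associated graph of groups $\Delta$ has $\pio{\Delta}=\HH_1*_{\langle w\rangle}\HH_2$, and since the two inclusions $\HH_i\into\FF$ agree on the element $w$ they assemble into a homomorphism $f\colon\pio{\Delta}\to\FF$ whose image is $L:=\langle\HH_1,\HH_2\rangle$, a free group. Applying Theorem~\ref{introthm: main} with $L$ in place of the target free group (so that $f$ is surjective, $f|_{\HH_i}$ injective, and each family dependent) gives
\[
  (n_{1,1}+n_{2,1})-1\leq(\rk{\HH_1}-1)+(\rk{\HH_2}-1)-(\rk{L}-1)~,
\]
i.e.\ $1\leq 3-\rk{L}$, so $\rk{L}\leq 2$. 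Since $L$ contains the non-abelian group $\HH_1$ it is non-cyclic, so $\rk{L}\geq 2$; hence $\rk{L}=2$.

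For the second assertion, assume $\pi(w)=2$; then $w$ is not a proper power (as $\pi(w)>1$), hence indivisible, and it is non-trivial and imprimitive in $\FF$, so the hypotheses of the first assertion are in force. If $\KK_1$ and $\KK_2$ are $w$--subgroups of $\FF$, then each has rank $2$ and contains $w$ imprimitively, so by the first assertion $\rk{\langle\KK_1,\KK_2\rangle}=2$. Now $\langle\KK_1,\KK_2\rangle$ is an overgroup of $\KK_1$ of rank equal to $\rk{\KK_1}$, so by condition (iii) in Definition~\ref{defn: w-subgroup} it cannot be a \emph{proper} overgroup; hence $\KK_2\subseteq\langle\KK_1,\KK_2\rangle=\KK_1$. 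Exchanging the roles of $\KK_1$ and $\KK_2$ gives $\KK_1\subseteq\KK_2$, so $\KK_1=\KK_2$ and the $w$--subgroup is unique.

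The only points requiring any care are the two verifications that $\{\langle w\rangle\}$ is a malnormal family in $\HH_i$ (which is exactly where indivisibility of $w$ is used) and a dependent one (where imprimitivity is used), together with the routine check that the image of $\pio{\Delta}$ is $\langle\HH_1,\HH_2\rangle$. I do not expect a real obstacle: once these data are in place the inequality is immediate, and the uniqueness statement is a one-line consequence of it — all of the genuine difficulty has been absorbed into Theorem~\ref{maintheorem}.
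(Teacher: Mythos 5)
Your proof of the first assertion is correct and is essentially the paper's own argument: the paper also applies Theorem~\ref{introthm: main} to the amalgam $\HH_1*_{\langle w\rangle}\HH_2$ mapped onto $\langle\HH_1,\HH_2\rangle$ to get $1\leq(\rk{\HH_1}-1)+(\rk{\HH_2}-1)-(\rk{\langle\HH_1,\HH_2\rangle}-1)$, and your explicit verifications (malnormality of $\langle w\rangle$ in $\HH_i$ from indivisibility, dependence of the singleton family from imprimitivity, surjectivity onto the join) are exactly what is implicit there.

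For the second assertion there is a gap: ``there is a unique $w$--subgroup'' asserts existence as well as uniqueness, and your argument only shows that two $w$--subgroups must coincide. Existence is not immediate from $\pi(w)=2$: that hypothesis produces a rank-two subgroup containing $w$ imprimitively, i.e.\ conditions (i) and (ii) of Definition~\ref{defn: w-subgroup}, but condition (iii) --- maximality with respect to \emph{all} proper overgroups --- still has to be arranged. The paper does this by considering the set $\mathcal{H}$ of rank-two subgroups of $\FF$ containing $w$ imprimitively: it is non-empty because $\pi(w)=2$, finite by (the argument of) Lemma~\ref{lem: Finitely many w-subgroups}, and directed under inclusion by the first assertion --- note that $w$ is imprimitive in $\langle\HH_1,\HH_2\rangle$ because a primitive element of a free group is primitive in every subgroup containing it, as observed after Definition~\ref{defn: Primitivity rank}. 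Hence $\mathcal{H}$ has a unique maximal element $K$, and the same primitivity-heredity observation shows that any proper overgroup of $K$ of rank at most two would again lie in $\mathcal{H}$, so $K$ satisfies condition (iii) and is the unique $w$--subgroup. Your uniqueness step via condition (iii) is fine (and slightly more direct than the paper's), but you need to add an existence argument of this kind (finiteness of the candidate set, or an ascending-chain argument for bounded-rank subgroups) to prove the statement as written.
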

\begin{proof}
  Since $w$ is imprimitive and not a proper power, Theorem
  \ref{introthm: main} applies to give
  \[
    1\leq
    (\rk{\HH_1}-1)+(\rk{\HH_2}-1)-(\rk{\langle\HH_1,\HH_2\rangle}-1)~,
  \]
  and since $\rk{\HH_1}=\rk{\HH_2}=2$, it follows that
  $\rk{\langle\HH_1,\HH_2\rangle}=2$ as required.
  
  Suppose that $\pi(w)=2$. Let $\mathcal{H}=\{\HH_i\}$ be the set of
  rank-two subgroups of ${\FF}$ so that $w\in\HH_i$ and $w$ is not
  primitive in $\HH_i$; $\mathcal{H}$ is finite by Lemma \ref{lem:
    Finitely many w-subgroups}, and since $\pi(w)=2$, $\mathcal{H}$ is
  non-empty.  Considering the partial order on $\mathcal{H}$ given by
  inclusion, the previous paragraph now implies that each pair has an
  upper bound, and it follows that $\mathcal{H}$ has a unique maximal
  element $K$, which is necessarily the unique $w$--subgroup.
\end{proof}

Therefore, in this case, we drop the unnecessary subscript $i$ and
write $P$ for the $w$--subgroup of $G$.  In light of
Conjecture~\ref{conj: rel hyp} we make the following definition.

\begin{definition}\label{defn: NN associate}
  If $\pi(w)=2$ then $P$ is the \emph{peripheral subgroup} of $\GG$.
\end{definition}

We do not currently know how to prove that $P$ is uniquely defined in
$\GG$ up to isomorphism. However, Lemma \ref{lem: Universal property
  of w-subgroups} shows that if
$\GG\cong\FF/\ncl{w}\cong\FF'/\ncl{w'}$ are isomorphic then the
corresponding peripheral subgroups $P$ and $P'$ are conjugate into
each other, which somewhat justifies the term `peripheral'. If
Conjecture \ref{conj: rel hyp} held then $P$ would be malnormal in
$\GG$, and therefore would be a well-defined isomorphism invariant.

\bibliographystyle{amsalpha}
%\bibliography{../../../baumslag.bib}

\providecommand{\bysame}{\leavevmode\hbox to3em{\hrulefill}\thinspace}
\providecommand{\MR}{\relax\ifhmode\unskip\space\fi MR }
% \MRhref is called by the amsart/book/proc definition of \MR.
\providecommand{\MRhref}[2]{%
  \href{http://www.ams.org/mathscinet-getitem?mr=#1}{#2}
}
\providecommand{\href}[2]{#2}

\Addresses

\end{document}